\title{Proximal-Proximal-Gradient Method} 
\author{Ernest K. Ryu and Wotao Yin}
\DeclareMathOperator*{\argmin}{argmin}
\newcommand{\prox}{{\mathbf{prox}}}
\newcommand{\reals}{\mathbb{R}}
\newcommand{\tnabla}{\tilde{\nabla}}
\newcommand{\EE}{\mathbb{E}}
\newtheorem{theorem}{Theorem}
\newtheorem{lemma}{Lemma}
\newtheorem{corollary}{Corollary}
\newcommand{\brr}{\bar{r}}
\newcommand{\brf}{\bar{f}}
\newcommand{\brg}{\bar{g}}
\newcommand{\vd}{\mathbf{d}}
\newcommand{\vx}{\mathbf{x}}
\newcommand{\vz}{\mathbf{z}}
\begin{document}

\maketitle
\begin{abstract}
In this paper, we present the proximal-proximal-gradient method (PPG), a novel optimization method that is simple to implement and simple to parallelize. PPG generalizes the proximal-gradient method and ADMM and is applicable to minimization problems written as a sum of many differentiable and many non-differentiable convex functions. The non-differentiable functions can be coupled. We furthermore present a related stochastic variation, which we call stochastic PPG (S-PPG). S-PPG can be interpreted as a generalization of Finito and MISO over to the sum of many coupled non-differentiable convex functions.

We present many applications that can benefit from PPG and S-PPG and prove convergence for both methods. We demonstrate the empirical effectiveness of both methods through experiments on a CUDA GPU. A key strength of PPG and S-PPG is, compared to existing methods, their ability to directly handle a large sum of non-differentiable non-separable functions with a constant stepsize independent of the number of functions. Such non-diminishing stepsizes allows them to be fast.
\end{abstract}

\section{Introduction}
\label{sc:intro}

In the past decade, first-order methods like
the proximal-gradient method and ADMM
have enjoyed wide popularity
due to their broad applicability, simplicity, and good empirical performance on problems with large data sizes.
However, there are many optimization problems such existing simple first-order methods cannot directly handle.
Without a simple and scalable method to solve them such optimization problems
have been excluded from machine learning and statistical modeling.
In this paper we present 
the proximal-proximal-gradient method
\eqref{eq:main-method},
a novel method
%that generalizes the proximal-gradient method and ADMM
that expands the class of problems
that one can solve with a simple and scalable first-order method.

Consider the optimization problem
\begin{equation}
\mbox{minimize}
\quad
r(x)+\frac{1}{n}\sum_{i=1}^n(f_i(x)+g_i(x)),
\label{eq:main-prob}
\end{equation}
where $x\in \reals^d$ is the optimization variable,
$f_1,\dots,f_n$, $g_1,\dots,g_n$, and $r$
are convex, closed, and proper functions from
$\reals^d$ to $\reals\cup\{\infty\}$.
Furthermore, assume $f_1,\dots,f_n$ are differentiable.
We call the method
\begin{align}
x^{k+1/2}&= \prox_{\alpha r}\left(\frac{1}{n}\sum^n_{i=1}z_i^k\right)
\nonumber
\\
x_i^{k+1}&=
\prox_{\alpha g_i}\left(2x^{k+1/2}-z_i^k-\alpha \nabla f_i(x^{k+1/2})\right)
\nonumber
\\
z_i^{k+1}&=z_i^k+x_i^{k+1}-x^{k+1/2},
\tag{PPG}
\label{eq:main-method}
\end{align}
the \emph{proximal-proximal-gradient method}
\eqref{eq:main-method}.
The $x_i^{k+1}$ and $z_i^{k+1}$ updates are performed for all $i=1,\dots,n$
and $\alpha >0$ is a stepsize parameter.
To clarify,
$x$, $x_1,\ldots,x_n$ and $z_1,\ldots,z_n$ are all vectors in $\mathbb{R}^d$ ($x_i$ is not a component of $x$),
$x_1^{k+1},\dots,x_n^{k+1}$ and $x^{k+1/2}$
approximates the solution to Problem~\eqref{eq:main-prob}.

Throughout this paper we write $\prox_h$ for the \emph{proximal operator}
with respect to the function $h$, defined as
\[
\prox_h(x_0)=\argmin_x
\left\{
h(x)+\frac{1}{2}\|x-x_0\|_2^2
\right\}
\]
for a function $h: \reals^d\rightarrow\reals\cup\{\infty\}$.
When $h$ is the zero function, $\prox_h$ is the identity operator. When $h$ is convex, closed, and proper,
the minimizer that defines $\prox_h$
exists and is unique \cite{minty1962}.
%, bauschke2011, ryu2016}.
For many interesting functions $h$,
the proximal operator $\prox_h$
has a closed or semi-closed form solution
and is computationally easy to evaluate
\cite{combettes2011, parikh2014}.
We loosely say such functions are
\emph{proximable}.

In general,
the proximal-gradient method or ADMM cannot directly solve
optimization problems expressed in the form of \eqref{eq:main-prob}.
When $f_1,\dots,f_n$ are not proximable,
ADMM either doesn't apply or must run another optimization
algorithm to evaluate the proximal operators at each iteration.
When $n\ge 2$ and  $g_1,\ldots,g_n$ are nondifferentiable nonseparable, so $g_1+\dots+g_n$ is not proximable
(although each individual $g_1,\dots,g_n$ is proximable).
Hence, proximal-gradient doesn't apply.

One possible approach to solving \eqref{eq:main-prob}
is to smooth the non-smooth parts and applying a (stochastic) gradient method.
Sometimes, however, keeping non-smooth part is essential.
For example, it is the non-smoothness of total variation %the $\ell_1$
penalty
that induces sharp edges in image processing. %sparsity.
In these situations
\eqref{eq:main-method} is particularly useful as it can handle a large sum of smooth and non-smooth terms directly without smoothing.

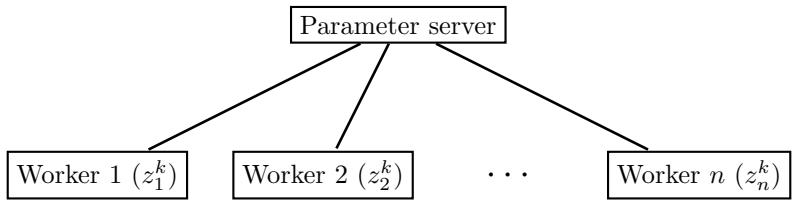
\begin{figure}
\begin{center}
\begin{tikzpicture}
[->,>=stealth',shorten >=1pt,auto,node distance=3cm,
                    thick,main node/.style={draw}]

  \node[main node] (1) at (4, 2) {Parameter server};
  \node[main node] (2) at (0, 0)  {Worker $1$ ($z_1^{k}$)};
  \node[main node] (3) at (3, 0)  {Worker $2$ ($z_2^{k}$)};
  \node[] (5) at (5.5, 0)  {\Large{$\cdots$}};
  \node[main node] (4) at (8, 0)  {Worker $n$ ($z_n^{k}$)};

  \path[line width=1pt,-]
    (1) edge node [left] {} (2)
    (1) edge node [left] {} (3)
    (1) edge node [left] {} (4);
\end{tikzpicture}
\end{center}
\caption{
When \eqref{eq:main-method} is implemented on a parameter server computing model,
the worker nodes communicate (synchronously) with the parameter server
but not directly with each other.
}
\label{fig:paramterserver}
\end{figure}

\vspace{0.1in}\noindent\textbf{Distributed PPG.}
To understand the algorithmic structure of the method,
it is helpful to see how \eqref{eq:main-method}
is well-suited for a distributed computing network.
See Figure~\ref{fig:paramterserver},
which illustrates a parameter server computing model
with a master node and $n$ worker nodes.

At each iteration, the workers
send their $z^{k}_i$ to the parameter server,
the parameter server computes $x^{k+1/2}$
and broadcasts it to the workers,
and each worker $i$ computes $z^{k+1}_i$ 
with access to $f_i$, $g_i$ and $z^k_i$
for $i=1,\dots,n$.
The workers maintain their private copy of $z^k_i$
and do not directly communicate with each other.

In other words, a distributed implementation 
performs step 1 of \eqref{eq:main-method},
the $x^{k+1/2}$ update, with an all-reduce operation.
It performs step 2 and step 3, the $x^{k+1}_i$ and $z^{k+1}_i$ updates,
in parallel.

\vspace{0.1in}\noindent\textbf{Time complexity, space complexity, and parallelization.}
At each iteration, \eqref{eq:main-method} evaluates
the proximal operators with respect to
$r$ and $g_1,\dots,g_n$ and computes the gradients of $f_1,\dots,f_n$.
So based on the iteration cost alone, we can predict
\eqref{eq:main-method} to be useful
when $r$ and $g_1,\dots, g_n$ are individually proximable
and the gradients of $f_1,\dots,f_n$ are easy to evaluate.
If, furthermore,
the number of iterations required to reach necessary accuracy
is not exorbitant,
\eqref{eq:main-method} is actually useful.

Let's say the computational costs of evaluating $\prox_{\alpha r}$ is $c_r$,
$\prox_{\alpha g_i}$ is $c_g$ for $i=1,\dots,n$,
and $\nabla f_i$ is $c_f$ for $i=1,\dots,n$.
Then the time complexity of \eqref{eq:main-method}
is $\mathcal{O}(nd+c_r+nc_g+nc_f)$ per iteration (recall $x\in\reals^d$).
As discussed, this cost can be reduced with parallelization.
Computing $x^{k+1/2}$ involves computing an average (a reduce operation), computing $\prox_{\alpha r}$, and a broadcast,
and computing $z^{k+1}_i$ for $i=1,\dots,n$ is embarrassingly parallel.
The space complexity of \eqref{eq:main-method} is $\mathcal{O}(nd)$
since it must store $z_1^k,\dots z_n^k$.
($x_1^{k},\dots,x_n^k$ need not be stored.)

%So $\mathcal{O}(nd)$ is the lower bound for the time and storage requirement of
%\eqref{eq:main-method}.
%We will see in our examples that this is comparable to
%to the size of the data that specifies the problem.

When the problem has sparse structure,
the computation time and storage can be further reduced.
For example, if $f_i+g_i$ does not depend on $(x_i)_1$ for some $i$,
then $(z_i)_1$ and $(x_i)_1$ can be eliminated from the algorithm
since $(z_i)^{k+1}_1 = (x^{k+1/2})_1$.
%If, for every $i$, $f_i+g_i$ involves no more than $p$ variables, where $p=\mathcal{O}(1)$, then the time complexity of \eqref{eq:main-method} reduces to $\mathcal{O}(d+c_r+c_g+c_f)$ per iteration and the space complexity to $\mathcal{O}(d)$. XXnot quite correct

The storage requirement of $O(nd)$ is fundamentally
difficult to improve upon due to the $n$ non-differentiable terms.
Consider the case where $r=f_1=\dots=f_n=0$:
 \[
\begin{array}{ll}
\mbox{minimize}&
g_1(x)+\cdots +g_n(x).
\end{array}
 \]
If $g_1,\dots,g_n$ were differentiable, then $\nabla g_1(x^*)+\dots+\nabla g_n(x^*)=0$
would certify $x^*$ is optimal.
However, we allow $g_1,\dots,g_n$ to be non-differentiable, so one must find
a particular set of subgradients $u_i\in \partial g_i(x^*)$ for $i=1,\dots,n$
such that $u_1+\dots+ u_n=0$ to certify $x^*$ is optimal.
The choices of subgradients,
$u_1,\dots,u_n$, depend on each other and cannot be found independently.
 %Because each $\partial g_i(x^*)$ generally contains infinitely many elements, one must find a particular set of subgradients, one for each $g_i(x^*)$, that sum to $0$.
 In other words, certifying optimality requires $\mathcal{O}(nd)$ information, and that is what PPG uses.
For comparison, ADMM also uses $\mathcal{O}(nd)$ storage when used to minimize
a sum of $n$ non-smooth functions.

\vspace{0.1in}\noindent\textbf{Stochastic PPG.}
Each iteration of \eqref{eq:main-method} updates $z^{k+1}_i$
for all $i=1,\dots,n$,
which takes at least $\mathcal{O}(nd)$ time per iteration.
In Section~\ref{sc:sppg} we present the method \eqref{eq:main-sppg}
which can be considered a stochastic variation of \eqref{eq:main-method}.
Each iteration of \eqref{eq:main-sppg} only updates
one $z^{k+1}_i$ for some $i$ and therefore can take
as little as $\mathcal{O}(d)$ time per iteration.
When compared epoch by epoch,
\eqref{eq:main-sppg} can be faster than \eqref{eq:main-method}.

\vspace{0.1in}\noindent\textbf{Convergence.}
Assume Problem~\eqref{eq:main-prob} has a solution (not necessarily unique)
and meets a certain regularity condition.
Furthermore, assume each $f_i$ in
has $L$-Lipschitz gradient for $i=1,\dots,n$,
so $\|\nabla f_i(x)-\nabla f_i(y)\|_2\le L\|x-y\|_2$ for all
$x,y\in\reals^d$ and $i=1,\dots,n$.
Then \eqref{eq:main-method} converges
to a solution of
Problem~\eqref{eq:main-prob}
for $0<\alpha<3/(2L)$.
In particular, we do not need strong convexity
to establish convegence.

In section~\ref{sc:conv} we discuss
convergence in more detail.
In particular, we prove both that the iterates converge
to a solution and that the objective values converge to
the optimal value.

%\commwy{We should move the comparison to other stochastic methods here.}

% \vspace{0.1in}\noindent\textbf{XXX.}
% The main strength of \eqref{eq:main-method} and \eqref{eq:main-sppg} is its ability
% to handle problems expressed as a sum of differentiable
% and multiple non-smooth functions.
% Optimization problem is expressed as a sum of
% smooth (but not proximable) functions
% and expressed as non-smooth proximable functions,
% methods like proximal-gradient method and ADMM

% when the gradients of $\nabla f_i$ are easy to evaluate but $\prox_{\alpha f_i}$ is not.
% When $g_1,\dots,g_n$ are individually proximable but $\sum^n_{i=1}g_i$ is not.

\vspace{0.1in}\noindent\textbf{Contribution of this work.}
The methods of this paper, \eqref{eq:main-method} and \eqref{eq:main-sppg},
are novel methods that can directly handle a sum of many 
differentiable and non-differentiable but proximable functions.
To solve such problems, exising first-order methods like ADMM
must evaluate proximal operators of differentiable functions,
which may hinder computational performance if said operator has no closed form solution.
Furthermore, the simplicity of our methods allows simple and efficient parallelization,
a point we discuss briefly.

The theoretical analysis of  \eqref{eq:main-method} and \eqref{eq:main-sppg},
especially that of \eqref{eq:main-sppg}, is novel.
As we discuss later, \eqref{eq:main-sppg} can be interpreted as a generalization to
varianced reduced gradient methods like Finito/MISO or SAGA.
The techniques we use to analyze \eqref{eq:main-sppg}
is different from those used to analyze
other varianced reduced gradient methods,
and we show more general (albeit not faster) convergence guarantees.
In particular, we establish almost sure convergence of iterates, and we do so without
any strong convexity assumptions. To the best of our knowledge, the existing
varianced reduced gradient method literature does not prove such results.

Finally, our method is the first work to establish a clear connection between
operator splitting methods and varianced reduced gradient methods.
As the name implies, existing varianced reduced gradient methods
view the method as improving, by reducing variance,
stochastic gradient methdos.
Our work identifies Finito/MISO as stochastic block-coordinate update applied to an operator splitting method.
It is this observation that allows us to analyze \eqref{eq:main-sppg},
which generalizes Finito/MISO to handle a sum of non-differentiable but proximable functions.

\section{Relationship to other methods}
In this section, we discuss how \eqref{eq:main-method} generalizes certain known methods.
%The derivations of the following methods as a special case of \eqref{eq:main-method}
%are all elementary but some are not obvious.
To clarify, PPG is a proper generalization of these existing methods
and cannot be analyzed as a special case of one of these methods.

\vspace{0.1in}\noindent\textbf{Proximal-gradient.}
When $g_i=0$ for $i=1,\dots,n$ in \eqref{eq:main-prob},
\eqref{eq:main-method} simplifies to
\begin{align*}
x^{k+1}&= \prox_{\alpha r}\left(
x^{k}-
\frac{\alpha }{n}\sum^n_{i=1}\nabla f_i(x^{k})\right).
\end{align*}
This method is the called proximal-gradient method or forward-backward splitting
\cite{passty1979, combettes2005}. %,beck2009, duchi2009}.

% \vspace{0.1in}\noindent\textbf{ADMM.}
% When
% $n=1$, $f_1=0$, and $g_1=g$
% in \eqref{eq:main-prob}
% \eqref{eq:main-method} simplifies to
% %\begin{align*}
% %x^{k+1/2}&= \prox_{\alpha r}\left(z^k\right)\\
% %x^{k+1}&=
% %\prox_{\alpha g}\left(2x^{k+1/2}-z^k\right)\\
% %z^{k+1}&=z^k+x^{k+1}-x^{k+1/2}
% %\end{align*}
% %If we let
% %$z^k=x^k-\alpha y^k$ and reorganize we get
% \begin{align*}
% x^{k+1/2}&= \argmin_{x}L_{1/\alpha}(x,x^k,y^k)\\
% x^{k+1}&= \argmin_{x}L_{1/\alpha}(x^{k+1/2},x,y^k)\\
% y^{k+1}&=y^k+
% \alpha(x^{k+1}-x^{k+1/2})
% \end{align*}
% where
% \[
% L_{1/\alpha}(x,x',y)
% =g(x)+r(x')+y^T(x'-x)+\frac{1}{2\alpha}\|x-x'\|_2^2
% \]
% This method is called ALG2 or ADMM
% \cite{gabay1976, glowinski1975} and reviewed in \cite{boyd2011}.

\vspace{0.1in}\noindent\textbf{ADMM.}
When $f=r=0$ in \eqref{eq:main-prob},
\eqref{eq:main-method} simplifies to
\begin{align*}
x^{k+1}_i&=
\argmin_x
\left\{
g_i(x)-(y^k_i)^Tx+\frac{1}{2\alpha}\|x-x^k\|_2^2
\right\}\\
y^{k+1}_i&=y^k_i+
\alpha(\bar{x}^{k+1}-x^{k+1}_i)
\end{align*}
where
\[
\bar{x}^{k+1}=\frac{1}{n}\sum^n_{i=1}x^{k+1}_i.
\]
This is also an instance of ADMM \cite{glowinski1975,gabay1976}.
See \S7.1 of \cite{boyd2011}. % this citation is fine but this idea appeared earlier in another paper

%The relationship to Lagrangian dual methods and operator splitting methods are discussed in Section~\ref{sc:conv}.

\vspace{0.1in}\noindent\textbf{Generalized forward-backward splitting.}
When $r=0$ and $f_1=f_2=\dots=f_n=f$ in \eqref{eq:main-prob},
\eqref{eq:main-method} simplifies to
\begin{align*}
z_i^{k+1}&=z_i^k-x^{k}+
\prox_{\alpha g_i}\left(2x^{k}-z_i^k-\alpha \nabla f(x^{k})\right)\\
x^{k+1}&=\frac{1}{n}\sum^n_{i=1}z_i^{k+1}.
\end{align*}
This is an instance of generalized forward-backward splitting
\cite{raguet2013}.

\vspace{0.1in}\noindent\textbf{Davis-Yin splitting.}
When $n=1$ in \eqref{eq:main-prob}, \eqref{eq:main-method} reduces to
Davis-Yin splitting,
and much of convergence analysis for \eqref{eq:main-method} is inspired by
that of Davis-Yin splitting \cite{davis2017}.
However, \eqref{eq:main-method} is more general and parallelizable.
Furthermore, \eqref{eq:main-prob} has a stochastic variation
\eqref{eq:main-sppg}.

\section{Stochastic PPG}
\label{sc:sppg}
%Each iteration of \eqref{eq:main-method} updates $z_i^{k+1}$
%for all $i=1,\dots,n$,
%and this requires
%$\mathcal{O}(nd)$ storage and at least $\mathcal{O}(nd)$
%computation time.
%Often, the data that specifies Problem~\eqref{eq:main-prob}
%requires $\mathcal{O}(nd)$ space,
%and one could argue that \eqref{eq:main-method}'s $\mathcal{O}(nd)$
%space requirement is not too onerous.
Each iteration of \eqref{eq:main-method} updates $z_i^{k+1}$ for all $i=1,\dots,n$,
which requires at least
$\mathcal{O}(nd)$ time (without parallelization or sparse structure).
Often, the data that specifies Problem~\eqref{eq:main-prob}
is of size $\mathcal{O}(nd)$,
and, roughly speaking, \eqref{eq:main-method} must process the entire dataset every iteration.
This cost of $\mathcal{O}(nd)$ time per iteration may be inefficient in certain applications.
%The space requirement comparable to the data size is arguably not too onerous.

%However, \eqref{eq:main-method}'s $\mathcal{O}(nd)$ (or more) computation time
%per iteration may be too much;

The following method, which we call the
\emph{stochastic proximal-proximal-gradient} method
\eqref{eq:main-sppg},
overcomes this issue:
\begin{align}
x^{k+1/2}&= \prox_{\alpha r}\left(\frac{1}{n}\sum^n_{i=1}z_i^k\right)\nonumber\\
i(k)&\sim \text{Uniform}(\{1,\dots,n\})\nonumber\\
x_{i(k)}^{k+1}&=
\prox_{\alpha g_{i(k)}}\left(2x^{k+1/2}-z_{i(k)}^k-\alpha \nabla f_{i(k)}(x^{k+1/2})\right)
\nonumber
\\
z_{j}^{k+1}&=\begin{cases}
z_{i(k)}^k+x_{i(k)}^{k+1}-x^{k+1/2}&\text{for }j=i(k),\\
z_{j}^k&\text{for }j\ne i(k).
\end{cases}
%z_{i(k)}^{k+1}&=z_{i(k)}^k+x_{i(k)}^{k+1}-x^{k+1/2}\\
%z_{j}^{k+1}&=z_{j}^k\quad\text{for } j\ne i(k).
\label{eq:main-sppg}
\tag{S-PPG}
\end{align}
At each iteration, only $z^{k+1}_{i(k)}$ is updated,
where the index $i(k)$ is chosen uniformly at random from $\{1,\dots,n\}$.
We can interpret \eqref{eq:main-sppg}
as a stochastic or coordinate update version of \eqref{eq:main-method}.
%As before, if $r$ or $g_{i(k)}$ vanishes from \eqref{eq:main-prob}, then $\prox_{\alpha r}$ or $\prox_{\alpha g_{i(k)}}$ reduces to the identity map, respectively.

\vspace{0.1in}\noindent\textbf{Time and space complexity.}
The space requirement of \eqref{eq:main-sppg}
is no different from that of \eqref{eq:main-method};
both methods use $\mathcal{O}(nd)$ space
to store $z^k_1,\dots, z^k_n$.
However, the cost per iteration of
\eqref{eq:main-sppg} can be as low as  $\mathcal{O}(d)$.
This is achieved with the following simple trick: maintain the quantity
\[
\bar{z}^k=\frac{1}{n}\sum^n_{i=1}z_i^k,
\]
and update it with
\[
\bar{z}^{k+1}=\bar{z}^k+(1/n)(x^{k+1}_{i(k)}-x^{k+1/2}).
\]
%if computing $\prox_{\alpha r}$, $\prox_{\alpha g_i}$ and $\nabla f$ is cheap.
%if the computational costs of computing $\prox_{\alpha r}$ and $\prox_{\alpha g_i}$
%for $i=1,\dots,n$ are $\mathcal{O}(d)$.

% \begin{align*}
% x'&= \prox_{\alpha r}\left(\frac{1}{n}\sum^n_{i=1}z_i\right)
% \\
% x_i^{+}&=
% \prox_{\alpha g_i}\left(2x'-z_i-\alpha \nabla f_i(x')\right)\\
% p_i(\bar{z},z_i)&=x'-x^+_i
% \end{align*}
% This way, we can write \eqref{eq:main-method2} as
% \[
% \vz^{k+1}=\vz^k-\eta\sum^n_{i=1}p_i(\bar{z},z_i)
% \]

% Let's enhance \eqref{eq:main-method}.
% We first introduce a slight generalization \eqref{eq:main-method}.
% \begin{align}
% x^{k+1/2}&= \prox_{\alpha r}\left(\frac{1}{n}\sum^n_{i=1}z_i^k\right)
% \nonumber
% \\
% x_i^{k+1}&=
% \prox_{\alpha g_i}\left(2x^{k+1/2}-z_i^k-\alpha \nabla f_i(x^{k+1/2})\right)
% \nonumber
% \\
% z_i^{k+1}&=z_i^k+\eta(x_i^{k+1}-x^{k+1/2}),
% \label{eq:main-method2}
% \tag{PPG2}
% \end{align}
% This method converges for
% $0<\alpha<2/L$ and $\eta<(4-\alpha L)/2$.
% What we have observed, the simplest choice $\eta=1$ seems to work fine.
% XXX
% \commwy{Then, we probably should just mention in Section 1 the change $z_i^{k+1}=z_i^k+\eta(x_i^{k+1}-x^{k+1/2})$, saving the space on (PPG2) above and below.}

% Write
% $\vz = (z_1,\ldots,z_n)\in \reals^{dn}$.
% Define $\vz^k$ similarly for $k=1,2,\dots$.
% We define the functions $p_1,\dots,p_n$ as

\vspace{0.1in}\noindent\textbf{Application to big-data problems.}
Consider a big-data problem setup where
the data that describes Problem~\eqref{eq:main-prob}
is stored on a hard drive,
but is too large to fit in a system's memory.
Under this setup, an optimization algorithm
that goes through the entire dataset
every iteration is likely impractical.

\eqref{eq:main-sppg} can handle this setup effectively
by keeping the data and the $z_i^k$ iterates on the hard drive
as illustrated in Figure~\ref{fig:sppg-big}.
At iteration $k$, \eqref{eq:main-sppg} selects index $i(k)$,
reads block $i(k)$ containing $f_{i(k)}$, $g_{i(k)}$, and $z_{i(k)}^k$ from the hard drive, 
performs computation, updates $\bar{z}^{k+1}$,
and writes $z_{i(k)}^{k+1}$ back to block $i(k)$.
The $\bar{z}^k$ iterate is used and updated every iteration and therefore should be
stored in memory.

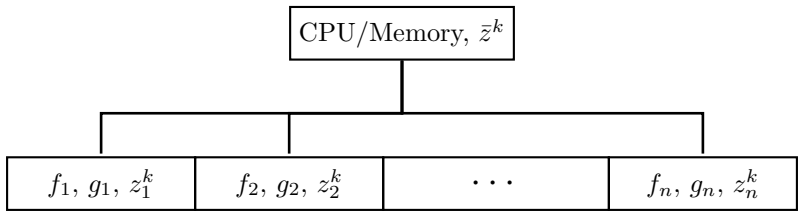
\begin{figure}
\begin{center}
\begin{tikzpicture}
[->,>=stealth',shorten >=1pt,auto,node distance=3cm,minimum width=2.5cm, minimum height=0.7cm,
                    thick,main node/.style={draw}]

  \node[main node] (1) at (4, 2) {CPU/Memory, $\bar{z}^k$};
  \node[main node] (2) at (0, 0)  {$f_1$, $g_1$, $z_1^k$};
  \node[main node] (3) at (2.5, 0)  {$f_2$, $g_2$, $z_2^k$};
  \node[main node, minimum width=3cm] (5) at (5.25, 0)  {\Large{$\cdots$}};
  \node[main node] (4) at (8, 0)  {$f_n$, $g_n$, $z_n^k$};

    \draw[line width=1pt,-] (1) -- node[node distance=1cm and 3.5cm,below=of 1] {}($(1.south)+(0,-0.7)$) -| (2) ;
    \draw[line width=1pt,-] (1) -- node[node distance=1cm and 3.5cm,below=of 1] {}($(1.south)+(0,-0.7)$) -| (3) ;
    \draw[line width=1pt,-] (1) -- node[node distance=1cm and 3.5cm,below=of 1] {}($(1.south)+(0,-0.7)$) -| (4) ;

\end{tikzpicture}
\caption{An illustration of \eqref{eq:main-sppg}
applied to a big-data problem. The bottom blocks
represent $n$ blocks of data stored on the hard drive.
The CPU accesses only one of the $n$ blocks per iteration.
}
\end{center}
\label{fig:sppg-big}
\end{figure}

\vspace{0.1in}\noindent\textbf{Interpretation as a variance reduced gradient method.}
Remarkably \eqref{eq:main-sppg}
converges to a solution with a fixed value of $\alpha$ (which is independent of $n$).
Many traditional and modern stochastic optimization methods require
their step sizes to diminish to $0$, theoretically and empirically,
and this limits their rates of convergence.

On the other hand, several modern
``variance reduced gradient'' methods
take advantage of a finite sum structure
similar to that of \eqref{eq:main-prob}
and achieve a faster rate with a constant step size.
In fact, these methods achieve
superior performance compared to full gradient updates.
Such methods include Finito, MISO, SVRG, SAG, SAGA, and SDCA
\cite{leroux2012,mairal2013,johnson2013,zhang2013, shalev2103,defazio2014b, defazio2014,nitanda2014,xiao2014, mairal2015,lan2015,schmidt2016, shalev2016, shalev2016b}.

In fact, \eqref{eq:main-sppg} directly generalizes Finito and MISO.
When $g_1=\dots=g_n=0$ and $r=0$ in \eqref{eq:main-prob},
we can rewrite \eqref{eq:main-sppg} as
%\begin{align*}
%w^{k+1}_{i(k)}&=\frac{1}{n}\sum^n_{i=1}z_i^k\\
%z^{k+1}_{i(k)}&=w_{i(k)}^{k+1}-\alpha \nabla f_{i(k)}(w^{k+1}_{i(k)})
%\end{align*}
% \begin{align*}
% i(k)&\sim \text{Uniform}(\{1,\dots,n\})\nonumber\\
% x^{k+1}_{i(k)}&=\frac{1}{n}\sum^n_{i=1}x_{i}^{k}-\alpha \nabla f_{i}(x^{k}_{i})\\
% x^{k+1}_{j}&=x^{k}_{j}\quad\text{for }j\ne i(k),
% \end{align*}
\begin{align*}
w^k&=\frac{1}{n}\sum^n_{i=1}z_i^k\\
i(k)&\sim \text{Uniform}(\{1,\dots,n\})\nonumber\\
\phi^{k+1}_{i(k)}&=w^k\\
z^{k+1}_{i(k)}
&=
\phi^{k+1}_{i(k)}-\alpha\nabla f_{i(k)}(\phi^{k+1}_{i(k)})\\
 z^{k+1}_{j}&=z^{k}_{j}\quad\text{for }j\ne i(k),
\end{align*}
which is Finito and an instance of MISO \cite{mairal2013,defazio2014}.
Therefore it is appropriate to view \eqref{eq:main-sppg}
as a variance reduced gradient method as opposed to a stochastic gradient method.
Of course, \eqref{eq:main-sppg} is more general
as it can handle a sum of non-smooth terms as well.

\vspace{0.1in}\noindent\textbf{Comparison to SAGA.}
\eqref{eq:main-sppg} is more general than 
SAGA \cite{defazio2014b} 
as it can directly handle a sum of many non-differentiable functions.
However, when $g_1=\dots=g_n=0$ in \eqref{eq:main-prob},
one can use SAGA,
and it is interesting to compare 
\eqref{eq:main-sppg} with SAGA under this scenario.

The difference in storage requirement is small.
\eqref{eq:main-sppg}
must store at least $nd$ numbers 
while SAGA 
must store at least 
$(n+1)d$.
This is because 
SAGA stores the current iterate and $n$ gradients,
while \eqref{eq:main-sppg} only stores $z_1,\dots,z_n$.

On the other hand, there is a
difference in memory access.
At each iteration 
\eqref{eq:main-sppg}
reads and updates $\bar{z}$
while SAGA reads and updates
the current iterate and average gradient.
So \eqref{eq:main-sppg}
reads $n$ fewer numbers
and writes $n$ fewer numbers
per iteration compared to SAGA.
Both SAGA and \eqref{eq:main-sppg}
read and update information corresponding
to a randomly chosen index,
and the memory access for this is  comparable.

\vspace{0.1in}\noindent\textbf{Comparison to stochastic proximal iteration.}
When $f_1=\dots=f_n=0$ and $r=0$ in \eqref{eq:main-prob},
the optimization problem  is
\[
\mbox{minimize}
\quad
\frac{1}{n}
\sum^n_{i=1}g_i(x).
\]
A stochastic method that can solve this problem is
stochastic proximal iteration:
\begin{align*}
i(k)&\sim \text{Uniform}(\{1,\dots,n\})\nonumber\\
x^{k+1}&=\prox_{\alpha^k g_{i(k)}}(x_k),
\end{align*}
where $\alpha^k$ is a appropriately decreasing step size.
Stochastic proximal iteration has been studied under many names
such as stochastic proximal point,
incremental stochastic gradient, and implicit stochastic gradient
\cite{langford2009,kulis2010, mcmahan2011, bertsekas2011, toulis2014, ryu2014,  wang2015,  bianchi2016, salim2016, wang2016,toulis2017}.

Stochastic proximal iteration requires the step size $\alpha^k$ to be diminishing,
whereas \eqref{eq:main-sppg} converges with a constant step size.
As mentioned,
optimization methods with diminishing step size tend to have slower rates,
which we can observe in the numerical experiments.
We experimentally compare \eqref{eq:main-method} and \eqref{eq:main-sppg}
to stochastic proximal iteration in Section~\ref{sc:experiments}.

%\vspace{0.1in}\noindent\textbf{Proximal Finito.}
%\begin{align*}
%x^{k+1}_{i(k)}&=
%\prox_{\alpha r}\left(
%\frac{1}{n}\sum^n_{i=1}x_{i}^{k}-\alpha \nabla f_{i}(x^{k}_{i})\right)\\
%x^{k+1}_{j}&=x^{k}_{j}\quad\text{for }j\ne i(k)
%\end{align*}
%The Finito paper presents this method without any proof. \cite{defazio2014}
%Our theoreitical analysis of Section~X proves convergence.

\vspace{0.1in}\noindent\textbf{Communication efficient implementation.}
One way to implement \eqref{eq:main-sppg} 
on a distributed computing network 
so that communication between nodes are minimized 
is to have nodes update and randomly pass around the $\overline{z}$ variable.
See Figure~\ref{fig:comm}.
Each iteration, the current node updates $\overline{z}$
and passes it to another randomly selected node.
Every neighbor and the current node is chosen with probability $1/n$.

The communication cost of this implementation of \eqref{eq:main-sppg} 
is $\mathcal{O}(d)$ per iteration. When the number of iterations required for convergence
is not large, this method is communication efficient.
For recent work on communication efficient optimization methods, see
\cite{zhang2012,mota2013,yang2013, jaggi2014,shamir2014,arjevani2015, zhang20152}.

\begin{figure}
\begin{center}
\begin{tikzpicture}[->,>=stealth',shorten >=1pt,auto,node distance=4cm,
                thick,main node/.style={draw,font=\Large\bfseries}]
\begin{scope}[every node/.style={rounded rectangle,thick,draw, align=center}]
    \node[line width=0.4mm]  (A) at (0,0) {$f_3$, $g_3$, $z_3$ \\ current $\overline{z}$};
    \node (B) at (0,2*1) {$f_4$, $g_4$, $z_4$ \\ past $\overline{z}$};
    \node  (C) at (2*0.866,2*1.5) {$f_5$, $g_5$, $z_5$ \\ past $\overline{z}$};
    \node (D) at (2*2*0.866,2*1) {$f_6$, $g_6$, $z_6$ \\ past $\overline{z}$};
    \node (E) at (2*2*0.866,2*0) {$f_1$, $g_1$, $z_1$ \\ past $\overline{z}$};
    \node (F) at (2*0.866,-2*0.5) {$f_2$, $g_2$, $z_2$ \\ past $\overline{z}$};
\end{scope}
  \path (A) edge  (B)
   (A) edge (C)
     (A) edge  (D)
 (A) edge  (E)
(A) edge (F);
  \path [dashed, -, line width=0.1mm](C) edge  (B);
  \path [dashed, -, line width=0.1mm](D) edge  (B);
    \path [dashed, -, line width=0.1mm](E) edge  (B);
      \path [dashed, -, line width=0.1mm](F) edge  (B);
    \path [dashed, -, line width=0.1mm](C) edge  (D);
        \path [dashed, -, line width=0.1mm](C) edge  (E);
            \path [dashed, -, line width=0.1mm](C) edge  (F);
    \path [dashed, -, line width=0.1mm](E) edge  (D);
    \path [dashed, -, line width=0.1mm](F) edge  (D);
    \path [dashed, -, line width=0.1mm](E) edge  (F);
\draw
 (A) to [->,out=180,in=270,looseness=5, below] (A);
\end{tikzpicture}
\end{center}
\caption{
Distributed implementation of \eqref{eq:main-sppg}
without synchronization.
Node $3$ has the current copy of $\overline{z}$,
and will pass it to another randomly selected node.
}
\label{fig:comm}
\end{figure}
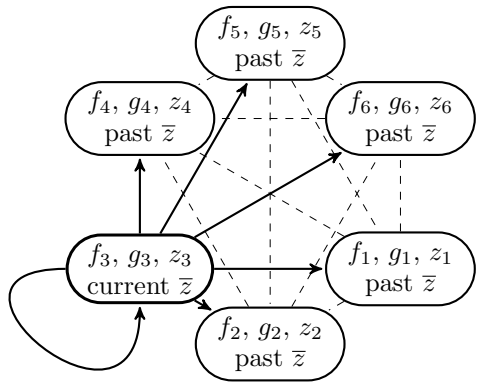

\vspace{0.1in}\noindent\textbf{Convergence.}
Assume Problem~\eqref{eq:main-prob} has a solution (not necessarily unique)
and meets a certain regularity condition.
Furthermore, assume each $f_i$ in
has $L$-Lipschitz continuous gradient for $i=1,\dots,n$,
so $\|\nabla f_i(x)-\nabla f(y)\|\le L\|x-y\|$ for all
$x,y\in\reals^d$ and $i=1,\dots,n$.
Then \eqref{eq:main-sppg} converges
to a solution of
Problem~\eqref{eq:main-prob}
for $0<\alpha<3/(2L)$.
%Again, we discuss
%convergence in more detail in Section~\ref{sc:conv}.
In particular, we do not assume strong convexity to establish convergence,
whereas many of the mentioned variance reduced gradient methods do.
\section{Convergence}
\label{sc:conv}
In this section, we present and discuss
the convergence of \eqref{eq:main-method} and \eqref{eq:main-sppg}.
%Due to space limitations, we defer the proofs to the appendix.

For this section, we introduce some new notation.
We write $\vx=(x_1,\dots,x_n)$,
and we use other boldface letters like $\boldsymbol{\nu}$ and $\vz$
in a similar manner.
We use the bar notation for $\bar{z}=(z_1+\dots+z_n)/n$.
We write
\begin{align*}
\bar{f}(x)&=(f_1(x)+\dots+f_n(x))/n\\
\bar{g}(x)&=(g_1(x)+\dots+g_n(x))/n,
\end{align*}
and, with some abuse of notation, we write
\begin{align*}
\bar{f}(\vx)=(f_1(x_1)+\dots+f_n(x_n))/n\\
\bar{g}(\vx)=(g_1(x_1)+\dots+g_n(x_n))/n.
\end{align*}
Note that $f_i$ and $g_i$ for depend $x_i$ instead of a common $x$.
The main problem \eqref{eq:main-prob} is equivalent to
\begin{align}
&\mbox{minimize}~~r(x) + \bar{f}(\vx) + \bar{g}(\vx)\\
&\mbox{subject to}~~ x - x_i = 0,\quad i=1,\ldots,n,
\end{align}
where $x$ and $\vx=(x_1,\ldots,x_n)$ are the optimization variables.
%Assume the constraint qualification
%\begin{equation}
%\relintr \dom (r)\cap\bigcap^n_{i=1}\relintr\dom (g_i)\ne \emptyset
%\label{eq:cq}
%\tag{CQ}
%\end{equation}
%where $\dom (f)=\{x\,|\,f(x)<\infty\}$ denotes the domain of a function
%and $\relintr$ denotes the relative interior of the set.
Convex duality tells us that
under certain regularity conditions $x^\star$ is a solution
of Problem~\eqref{eq:main-prob} if and only if
$(x^\star,\vx^\star,\boldsymbol{\nu}^\star)$ is a saddle point of the Lagrangian
\begin{equation}
L(x,\vx,\boldsymbol{\nu})
=
r(x)+
\bar{f}(\vx)+\bar{g}(\vx)+
\frac{1}{n}\sum_{i=1}^n\nu_i(x_i-x),
\label{eq:lagrangian}
\end{equation}
where $\boldsymbol{\nu}^\star=(\nu_1^\star,\dots,\nu_n^\star)$ is a dual solution,
and $\vx^\star=(x^\star,\dots,x^\star)$.
We simply assume the Lagrangian \eqref{eq:lagrangian}
has a saddle point.
This is not a stringent requirement and is merely assumed to avoid pathologies.

Define the mapping $p(\vz)$ as
\begin{align}
(p(\vz))_i&=(1/\alpha)(x-x_i') \quad \text{for }i=1,\dots,n,\nonumber\\
\mbox{where}\quad x & = \prox_{\alpha r}(\bar{z}),\label{eq:p-mapping}
\\
x_i' & = \prox_{\alpha g_i}\left(2x-z_i-\alpha \nabla f_i(x)\right).\nonumber
\end{align}
With this notation, we can express \eqref{eq:main-method} as
\[
\vz^{k+1} = \vz^k - \alpha p(\vz^k),
\]
and we can say $\vz$ is a fixed point of
\eqref{eq:main-method} and \eqref{eq:main-sppg}
if and only if $p(\vz)=0$.

\begin{lemma}
\label{lm:encoding}
$\vz^\star = (z_1^\star,\ldots,z_n^\star)$
is a fixed point of \eqref{eq:main-method} and \eqref{eq:main-sppg}
if and only if
$z_i^\star=x^\star+\alpha \nu_i^\star$
for $i=1,\dots,n$,
where $x^\star$ and $\nu_1^\star,\dots,\nu_n^\star$
are primal and dual solutions.
In particular, we can recover the $x^\star$ as
$x^\star=\prox_{\alpha r}((1/n)(z^\star_1+\dots+z^\star_n))$.
\end{lemma}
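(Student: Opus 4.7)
The plan is to establish the equivalence by unfolding the definition of the prox operator in terms of subgradient inclusions and matching them, term for term, with the KKT conditions of the saddle point of the Lagrangian \eqref{eq:lagrangian}. The key identity I will use throughout is the standard characterization
\[
u = \prox_{\alpha h}(v) \iff v-u \in \alpha \, \partial h(u),
\]
valid for any closed convex proper $h$.

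First I would translate ``$\vz^\star$ is a fixed point'' into concrete subgradient inclusions. The condition $p(\vz^\star)=0$ means $x_i' = x$ for every $i$ in the definition \eqref{eq:p-mapping}. Denoting this common value $x^\star$, I obtain
\begin{align*}
\bar{z}^\star - x^\star &\in \alpha \, \partial r(x^\star), \\
(2x^\star - z_i^\star - \alpha \nabla f_i(x^\star)) - x^\star &\in \alpha \, \partial g_i(x^\star) \quad (i=1,\dots,n),
\end{align*}
and the second line simplifies to $-(z_i^\star - x^\star) - \alpha\nabla f_i(x^\star) \in \alpha\,\partial g_i(x^\star)$.

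Next I would write out the KKT conditions for the Lagrangian \eqref{eq:lagrangian}. Primal feasibility gives $x_i^\star = x^\star$ for all $i$; stationarity in $x$ yields $\tfrac{1}{n}\sum_i \nu_i^\star \in \partial r(x^\star)$; stationarity in each $x_i$ yields $-\nu_i^\star - \nabla f_i(x^\star) \in \partial g_i(x^\star)$. The crucial observation is that after the substitution $z_i^\star = x^\star + \alpha\nu_i^\star$, the quantity $\bar z^\star - x^\star$ becomes exactly $\alpha \cdot \tfrac{1}{n}\sum_i \nu_i^\star$, while $-(z_i^\star - x^\star) - \alpha\nabla f_i(x^\star)$ becomes exactly $\alpha(-\nu_i^\star - \nabla f_i(x^\star))$. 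Thus the two systems of inclusions are identical (up to the factor $\alpha$), which gives both directions of the equivalence in one stroke. The recovery formula $x^\star = \prox_{\alpha r}(\bar{z}^\star)$ follows from the first inclusion.

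I do not expect a serious obstacle here; the argument is essentially bookkeeping. The only subtle point is keeping track of the $1/n$ factor in the Lagrangian and the implicit identification of the multiplier on the constraint $x - x_i = 0$ as $\nu_i/n$, which is what makes $\tfrac{1}{n}\sum_i \nu_i^\star \in \partial r(x^\star)$ rather than $\sum_i \nu_i^\star \in \partial r(x^\star)$ show up in the stationarity condition. Once that scaling is accounted for, the linear change of variables $z_i^\star = x^\star + \alpha\nu_i^\star$ is forced, and the two characterizations coincide.
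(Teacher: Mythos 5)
Your proof is correct and follows essentially the same route as the paper's: both unfold the fixed-point condition $p(\vz^\star)=0$ into the subgradient inclusions defining the two proximal steps and identify the dual variable as $\nu_i^\star=(z_i^\star-x^\star)/\alpha$, matching the result against the optimality conditions of the Lagrangian \eqref{eq:lagrangian}. The only cosmetic difference is that the paper first sums the inclusions to exhibit $x^\star$ as a primal solution and then verifies the saddle-point property block by block, whereas you match the full KKT system directly, which handles both directions symmetrically; your handling of the $1/n$ scaling of the multipliers is consistent with the paper's convention.
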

This lemma provides us insight as to why the method converges.
Let's write $z_i^k=x_i^k+\alpha \nu_i^k$. Then
the updates can be written as
\[
x^{k+1/2}
=\argmin_x\left\{
r(x)+(\bar\nu_i^k)^T(\bar{x}_i^k-x)+\frac{1}{2\alpha}\|x-\bar{x}_i^k\|_2^2
\right\}
\]
and
\begin{align*}
x^{k+1}_i
&=\argmin_x\Big\{
f(x^{k+1/2})+(\nabla f(x^{k+1/2}))^T(x-x^{k+1/2})\\
&+g(x)+(x-2x^{k+1/2}+x_i^k)^T\nu_i^k+\frac{1}{2\alpha}\|x-2x^{k+1/2}+x_i^k\|_2^2
\Big\}.
\end{align*}
Since
$(x^\star,\vx^\star,\boldsymbol{\nu}^\star)$ is a saddle point of the Lagrangian,
we can see that $z^\star_i=x^\star+\alpha\nu_i^\star$ is a fixed point of \eqref{eq:main-method}.

\subsection{Deterministic analysis.}
We examine the convergence results for \eqref{eq:main-method}.
\begin{theorem}
\label{thm:seq}
Assume $f_1,\dots,f_n$ are differentiable and have $L$-Lipschitz continuous gradients.
Assume the Lagrangian \eqref{eq:lagrangian}
has a saddle point
and $0<\alpha<3/(2L)$.
Then the sequence
$\|p(\vz^k)\|_2\rightarrow 0$ monotonically
with rate
\[
\|p(\vz^k)\|_2\le
%\frac{3}{\alpha^2(1-2\alpha L/3)(k+1)}\|\vz_0-\vz^\star\|_2^2.
\mathcal{O}(1/\sqrt{k}).
\]
Furthermore $\vz^k\rightarrow \vz^\star$,
$x^{k+1/2}\rightarrow x^\star$, and
$x^{k+1}_i\rightarrow x^\star$ for all $i=1,\dots,n$, where $\vz^\star$ is a fixed point of \eqref{eq:main-method} and $x^\star$ is a solution of \eqref{eq:main-prob}.
\end{theorem}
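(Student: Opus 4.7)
The plan is to follow the Davis-Yin template on the product space $\reals^{nd}$. The PPG iteration is $\vz^{k+1} = T\vz^k$ with $T = I - \alpha p$, where $p$ is the residual in \eqref{eq:p-mapping}. The key step is to prove a one-step energy inequality of the form
\[
\|\vz^{k+1} - \vz^\star\|_2^2 \le \|\vz^k - \vz^\star\|_2^2 - c\,\alpha^2 \|p(\vz^k)\|_2^2
\]
for some $c > 0$ whenever $0 < \alpha < 3/(2L)$, where $\vz^\star$ is any fixed point characterized by Lemma~\ref{lm:encoding}. The ingredients are firm nonexpansiveness of $\prox_{\alpha r}$ comparing $\bar{z}^k$ with $\bar{z}^\star$, firm nonexpansiveness of each $\prox_{\alpha g_i}$, and cocoercivity of $\nabla f_i$ (Baillon-Haddad: $\langle \nabla f_i(x) - \nabla f_i(y), x - y \rangle \ge (1/L)\|\nabla f_i(x) - \nabla f_i(y)\|_2^2$). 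Combining these with the primal-dual encoding from Lemma~\ref{lm:encoding} and completing the square should deliver the inequality. I expect this to be the main obstacle: identifying the right linear combination of the firm/cocoercive inequalities and carefully handling the cross terms involving $\alpha \nabla f_i(x^{k+1/2})$ so that the sharp bound $3/(2L)$ emerges, in analogy with the three-operator Davis-Yin analysis.

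The remaining claims follow from standard arguments once the energy inequality is in hand. Telescoping yields $\sum_{k=0}^\infty \|p(\vz^k)\|_2^2 < \infty$, so $\|p(\vz^k)\|_2 \to 0$ and $\{\vz^k\}$ is Fej\'er monotone with respect to the fixed-point set. Monotonicity of $\|p(\vz^k)\|_2$ is obtained by upgrading the energy inequality to full averagedness of $T$: the same firm/cocoercive ingredients apply symmetrically between any two points and yield
\[
\|T\vz - T\vz'\|_2^2 \le \|\vz - \vz'\|_2^2 - \tfrac{1-\theta}{\theta}\|(I-T)\vz - (I-T)\vz'\|_2^2
\]
for some $\theta \in (0,1)$. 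Setting $\vz' = T\vz^k$ collapses the right-hand side into $\|p(\vz^{k+1})\|_2 \le \|p(\vz^k)\|_2$. Combining summability with monotonicity via the elementary bound $(k+1)\|p(\vz^k)\|_2^2 \le \sum_{j=0}^k \|p(\vz^j)\|_2^2$ produces the $\mathcal{O}(1/\sqrt{k})$ rate.

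For iterate convergence, Fej\'er monotonicity implies $\{\vz^k\}$ is bounded, and continuity of $p$ combined with $\|p(\vz^k)\|_2 \to 0$ forces every cluster point to be a fixed point. A standard Opial-type argument in finite dimensions then upgrades subsequential convergence to $\vz^k \to \vz^\star$ for a single fixed point $\vz^\star$. Finally, Lemma~\ref{lm:encoding} and continuity of $\prox_{\alpha r}$ give $x^{k+1/2} = \prox_{\alpha r}(\bar{z}^k) \to \prox_{\alpha r}(\bar{z}^\star) = x^\star$, and $x_i^{k+1} \to x^\star$ follows from continuity of $\prox_{\alpha g_i}$ and $\nabla f_i$ together with $z_i^k \to z_i^\star$ and $x^{k+1/2} \to x^\star$.
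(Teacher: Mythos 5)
Your proposal is correct and follows essentially the same route as the paper: a Fej\'er-type energy inequality built from firm nonexpansiveness of the two proximal maps plus Baillon--Haddad cocoercivity of $\nabla \brf$ (this is the paper's Lemma~\ref{lem:key} combined with a Young-inequality step yielding \eqref{eq:keyz}), the same two-point inequality applied to $(\vz^k,\vz^{k+1})$ for monotonicity of $\|p(\vz^k)\|$, summability plus monotonicity for the $\mathcal{O}(1/\sqrt{k})$ rate, and the standard Fej\'er/cluster-point argument with continuity of $p$ and the proximal maps for iterate convergence.
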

Theorem~\ref{thm:seq}
should be understood as two related but separate results.
The first result states $p(\vz^k)\rightarrow 0$ and provides a rate.
Since $p(\vz)=0$ implies $\prox_{\alpha r}(\bar{z})$ is a solution,
the rate does quantify progress.
The second result states that the iterates of \eqref{eq:main-method} converge
but with no guarantee of rate (just like gradient descent without strong convexity).

To obtain a more direct measure of progress, define
\begin{align*}
E^k&=
r(x^{k+1/2})+\bar{f}(x^{k+1/2})+\bar{g}(\vx^{k+1})
%\\
%&
-\big(r(x^\star)+\bar{f}(x^{\star})+\bar{g}(x^\star)\big).
\end{align*}
$E^k$ is almost like the suboptimality of iterates, but not quite,
as the point where $\bar{g}$ is evaluated at
is different from the point where $r$ and $\bar{f}$ is evaluated at.
In fact, $E^k$ is not  necessarily positive. Nevertheless,
we can show a rate on $|E^k|$.
\begin{theorem}
\label{thm:fun-rate}
%Assume $f_1,\dots,f_n$ are differentiable and have $L$-Lipschitz continuous gradients.
%Assume the Lagrangian \eqref{eq:lagrangian}
%has a saddle point
%and $0<\alpha<3/(2L)$. 
Under the setting of Theorem \ref{thm:seq},
\[
|E^k|\le
\mathcal{O}(1/\sqrt{k}).
%C\frac{1}{\sqrt{k}}
\]
\end{theorem}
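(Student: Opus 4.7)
}
The plan is to sandwich $E^k$ between two quantities that are each $\mathcal{O}(\|p(\vz^k)\|_2)$, then invoke Theorem~\ref{thm:seq} which gives $\|p(\vz^k)\|_2\le \mathcal{O}(1/\sqrt{k})$ and tells us the iterates are bounded (they converge). First I would read off the subgradient inclusions built into the proximal updates:
\[
s_r^k := (\bar{z}^k - x^{k+1/2})/\alpha \in \partial r(x^{k+1/2}),\quad
s_{g,i}^k := (2x^{k+1/2}-z_i^k-\alpha\nabla f_i(x^{k+1/2})-x_i^{k+1})/\alpha \in \partial g_i(x_i^{k+1}),
\]
and note the key identity $x^{k+1/2}-x_i^{k+1}=\alpha (p(\vz^k))_i$.

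For the upper bound, convexity of $r$, $f_i$, $g_i$ yields
\begin{align*}
E^k &\le \langle s_r^k, x^{k+1/2}-x^\star\rangle + \tfrac{1}{n}\sum_i\langle \nabla f_i(x^{k+1/2}), x^{k+1/2}-x^\star\rangle + \tfrac{1}{n}\sum_i\langle s_{g,i}^k, x_i^{k+1}-x^\star\rangle.
\end{align*}
A short direct computation shows that $s_r^k + \tfrac{1}{n}\sum_i\nabla f_i(x^{k+1/2}) + \tfrac{1}{n}\sum_i s_{g,i}^k$ telescopes to $\bar p(\vz^k):=\tfrac{1}{n}\sum_i (p(\vz^k))_i$, because the $\bar z^k$, $z_i^k$, and gradient terms all cancel. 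Using $x_i^{k+1}-x^\star = (x^{k+1/2}-x^\star) - \alpha (p(\vz^k))_i$ to split the last sum, one gets
\[
E^k \le \langle \bar p(\vz^k), x^{k+1/2}-x^\star\rangle - \tfrac{\alpha}{n}\sum_i\langle s_{g,i}^k,(p(\vz^k))_i\rangle,
\]
and Cauchy–Schwarz together with boundedness of $x^{k+1/2}$ and of the $s_{g,i}^k$ (which follows because $\vz^k,\vx^{k+1},\nabla f_i(x^{k+1/2})$ are all bounded by Theorem~\ref{thm:seq}) yields $E^k \le C\|p(\vz^k)\|_2$.

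For the lower bound, substitute $(x,\vx)=(x^{k+1/2},\vx^{k+1})$ into the saddle-point inequality $L(x,\vx,\boldsymbol{\nu}^\star)\ge L(x^\star,\vx^\star,\boldsymbol{\nu}^\star)$ to obtain
\[
r(x^{k+1/2})+\bar f(\vx^{k+1})+\bar g(\vx^{k+1}) - \bigl(r(x^\star)+\bar f(x^\star)+\bar g(x^\star)\bigr) \ge \tfrac{\alpha}{n}\sum_i \langle \nu_i^\star, (p(\vz^k))_i\rangle.
\]
Since $E^k$ differs from the left-hand side only by $\bar f(x^{k+1/2})-\bar f(\vx^{k+1})$, and convexity of $f_i$ gives $\bar f(\vx^{k+1})-\bar f(x^{k+1/2})\ge -\tfrac{\alpha}{n}\sum_i\langle \nabla f_i(x^{k+1/2}),(p(\vz^k))_i\rangle$, we conclude
\[
E^k \ge \tfrac{\alpha}{n}\sum_i\langle \nu_i^\star-\nabla f_i(x^{k+1/2}), (p(\vz^k))_i\rangle \ge -C'\|p(\vz^k)\|_2.
\]
Combining the two bounds gives $|E^k|\le \mathcal{O}(\|p(\vz^k)\|_2)=\mathcal{O}(1/\sqrt{k})$.

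The main obstacle I anticipate is the bookkeeping in the telescoping computation that reduces the sum of the three subgradients to $\bar p(\vz^k)$; without that cancellation the upper-bound term would involve $\|x^{k+1/2}-x^\star\|$ multiplied by an unstructured residual rather than something controlled by $p(\vz^k)$. Everything else is convexity, the saddle-point property, Cauchy–Schwarz, and the boundedness of the iterates and gradients guaranteed by Theorem~\ref{thm:seq}.
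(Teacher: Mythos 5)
Your overall strategy is the same as the paper's: sandwich $E^k$ between two quantities of size $\mathcal{O}(\|p(\vz^k)\|_2)$ using convexity, the cancellation of the three subgradients into $p(\vz^k)$, and the boundedness of the iterates from Theorem~\ref{thm:seq}. The upper bound is correct; your telescoping identity $s_r^k+\tfrac{1}{n}\sum_i\nabla f_i(x^{k+1/2})+\tfrac{1}{n}\sum_i s_{g,i}^k=\bar p(\vz^k)$ is exactly the paper's relation $p(\vz^k)=\tnabla\brr(\vx^{k+1/2})+\nabla\brf(\vx^{k+1/2})+\tnabla\brg(\vx^{k+1})$ written in unlifted coordinates, and bounding $s_{g,i}^k$ directly by boundedness of $z_i^k$, $x_i^{k+1}$, and $\nabla f_i(x^{k+1/2})$ is a legitimate substitute for the paper's substitution $\vx^{k+1/2}-\alpha\nabla\brf(\vx^{k+1/2})-\alpha\tnabla\brg(\vx^{k+1})=\vz^{k+1}$. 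Your lower bound via the saddle-point inequality is also morally the paper's argument, since the dual variables $\nu_i^\star$ are precisely the subgradients at $\vx^\star$ that sum to $p(\vz^\star)=0$.

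There is, however, one direction error in the lower bound. You need a \emph{lower} bound on $\bar f(x^{k+1/2})-\bar f(\vx^{k+1})$, equivalently an \emph{upper} bound on $\bar f(\vx^{k+1})-\bar f(x^{k+1/2})$, but the convexity inequality you invoke, $\bar f(\vx^{k+1})-\bar f(x^{k+1/2})\ge -\tfrac{\alpha}{n}\sum_i\langle\nabla f_i(x^{k+1/2}),(p(\vz^k))_i\rangle$, is a lower bound on that difference and cannot be chained with the saddle-point inequality to lower-bound $E^k$. The fix is to apply the gradient inequality anchored at $x_i^{k+1}$ instead: $f_i(x^{k+1/2})\ge f_i(x_i^{k+1})+\langle\nabla f_i(x_i^{k+1}),x^{k+1/2}-x_i^{k+1}\rangle$ gives $\bar f(x^{k+1/2})-\bar f(\vx^{k+1})\ge \tfrac{\alpha}{n}\sum_i\langle\nabla f_i(x_i^{k+1}),(p(\vz^k))_i\rangle$, and since $\nabla f_i(x_i^{k+1})$ is bounded (by convergence of $x_i^{k+1}$ and continuity of $\nabla f_i$) you still get $E^k\ge -C'\|p(\vz^k)\|_2$. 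With that one-line correction the argument is complete and matches the paper's.
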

Define a similar quantity
\begin{align*}
e^k&=
\big(r(x^{k+1/2})+\bar{f}(x^{k+1/2})+\bar{g}(x^{k+1/2})\big)-\big(r(x^\star)+\bar{g}(x^\star)+\bar{f}(x^{\star})\big).
\end{align*}
While $e^k$ truly measures suboptimality of $x^{k+1/2}$,
it is possible for $e^k=\infty$ for all $k=1,2,\dots$ because $r$ and $g$ are possibly nonsmooth and valued $\infty$ at some points.
%since $x^{k+1/2}$ may not be in the domain of $\bar{g}$.
We need an additional assumption
for $e^k$ to be a meaningful quantity.
\begin{corollary}
\label{cor:fun-rate}
Assume the setting of Theorem \ref{thm:seq}.
Further assume $\bar{g}(x)$ is Lipschitz continuous with
parameter $L_g$. Then
\[
0\le e^k \le |E^k|+L_g\|p(\vz^k)\|
\]
and
\[
e^k=\mathcal{O}(1/\sqrt{k}).
\]
\end{corollary}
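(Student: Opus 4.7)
\textbf{Proof plan for Corollary \ref{cor:fun-rate}.} The strategy is to express $e^k$ as $E^k$ plus a correction term, control that correction by the Lipschitz assumption on $\bar g$ together with the structural identity built into the map $p$, and then invoke the rates from Theorem \ref{thm:seq} and Theorem \ref{thm:fun-rate}. Directly from the two definitions, the $r(x^{k+1/2})$, $\bar f(x^{k+1/2})$, and optimal-value terms cancel, leaving
\[
e^k - E^k \;=\; \bar{g}(x^{k+1/2}) - \bar{g}(\vx^{k+1}) \;=\; \frac{1}{n}\sum_{i=1}^n\bigl(g_i(x^{k+1/2}) - g_i(x_i^{k+1})\bigr).
\]
Thus the only thing separating $e^k$ from the already-controlled $E^k$ is a sum of $g_i$-values at the two different points $x^{k+1/2}$ and $x_i^{k+1}$.

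Next I would bound this correction. Lipschitz continuity of $\bar g$ with parameter $L_g$, interpreted in the $\vx$-variable sense consistent with the statement, yields a bound of the form $|e^k - E^k| \le L_g \,\|\vx^{k+1/2} - \vx^{k+1}\|$, where $\vx^{k+1/2}$ denotes the vector whose blocks are all equal to $x^{k+1/2}$. The crucial identity is the one built into the definition of $p$ in \eqref{eq:p-mapping}: since $(p(\vz^k))_i = (1/\alpha)(x^{k+1/2} - x_i^{k+1})$, summing squares gives $\|\vx^{k+1/2} - \vx^{k+1}\| = \alpha\,\|p(\vz^k)\|$, and after absorbing the constant $\alpha$ (and any factor coming from the precise norm convention used for Lipschitz continuity on $\vx$) into $L_g$, one obtains $|e^k - E^k| \le L_g\,\|p(\vz^k)\|$. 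The lower bound $e^k \ge 0$ is immediate: $x^\star$ minimizes $r + \bar f + \bar g$, and $e^k$ is by definition the gap at $x^{k+1/2}$. The triangle inequality then delivers the displayed chain
\[
0 \;\le\; e^k \;\le\; |E^k| + |e^k - E^k| \;\le\; |E^k| + L_g\,\|p(\vz^k)\|.
\]

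For the rate, I would simply plug in: Theorem \ref{thm:fun-rate} gives $|E^k| = \mathcal{O}(1/\sqrt{k})$ and Theorem \ref{thm:seq} gives $\|p(\vz^k)\| = \mathcal{O}(1/\sqrt{k})$, so $e^k = \mathcal{O}(1/\sqrt{k})$. The main bookkeeping obstacle — and it is bookkeeping rather than a genuine difficulty — is making the Lipschitz argument match the constant $L_g$ exactly. Depending on whether ``$\bar g$ is $L_g$-Lipschitz'' refers to the single-variable function on $\reals^d$ or, via the paper's stated abuse of notation, to $\bar g(\vx) = (1/n)\sum_i g_i(x_i)$ on $\reals^{nd}$, one picks up factors of $n$ or $\sqrt n$ and of $\alpha$ that have to be absorbed; modulo that constant-tracking, the corollary is a routine combination of the two preceding theorems with an elementary Lipschitz estimate.
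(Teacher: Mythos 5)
Your proof is correct and follows exactly the route the paper intends: the paper gives only the one-line remark that the corollary ``follows immediately from combining Theorems~\ref{thm:seq} and \ref{thm:fun-rate} with $e^k$'s and $L_g$'s definitions,'' and your argument is precisely that combination made explicit (the cancellation leaving $e^k - E^k = \bar{g}(x^{k+1/2}) - \bar{g}(\vx^{k+1})$, the identity $\vx^{k+1/2}-\vx^{k+1} = \alpha\, p(\vz^k)$ from the definition of $p$, and optimality of $x^\star$ for the lower bound $e^k\ge 0$). Your flag about the stray factor of $\alpha$ and the norm convention in the Lipschitz hypothesis is a fair catch --- the paper's displayed bound $L_g\|p(\vz^k)\|$ silently absorbs those constants, which is harmless for the $\mathcal{O}(1/\sqrt{k})$ conclusion.
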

The proof of Corollary~\ref{cor:fun-rate} follows immediately
from combining Theorems~\ref{thm:seq} and \ref{thm:fun-rate}
with $e^k$'s and $L_g$'s definitions.

The $1/\sqrt{k}$ rates for Theorem~\ref{thm:fun-rate} and Corollary~\ref{cor:fun-rate}
can be improved to the $1/{k}$ rates by using the
\emph{ergodic iterates}:
\[
x^{k+1/2}_\mathrm{erg}=\frac{1}{k}\sum^k_{j=1}x^{j+1/2},
\quad
\vx^{k+1}_\mathrm{erg}=\frac{1}{k}\sum^k_{j=1}\vx^{j+1}.
\]
With these ergodic iterates, we define
\begin{align*}
E^k_\mathrm{erg}&=
\big(r(x^{k+1/2}_\mathrm{erg})+\bar{f}(x^{k+1/2}_\mathrm{erg})+\bar{g}(\vx^{k+1}_\mathrm{erg})\big)
%\\
%&
-\big(r(x^\star)+\bar{g}(\vx^\star)+\bar{f}(x^{\star})\big),\\
e^k_\mathrm{erg}&=
\big(r(x^{k+1/2}_\mathrm{erg})+\bar{f}(x^{k+1/2}_\mathrm{erg})+\bar{g}(x^{k+1/2}_\mathrm{erg})\big)
%\\
%&
-\big(r(x^\star)+\bar{g}(\vx^\star)+\bar{f}(x^{\star})\big).
\end{align*} 
\begin{theorem}
\label{thm:erg}
Assume the setting of Theorem \ref{thm:seq}. Then
\[
|E^k_\mathrm{erg}|\le \mathcal{O}(1/k)
\]
Further assume $\bar{g}(x)$ is Lipschitz continuous with
parameter $L_g$. Then
\[
e^k_\mathrm{erg}\le \mathcal{O}(1/k).
\]
\end{theorem}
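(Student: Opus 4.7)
The plan is to leverage the per-iteration descent inequality that underlies the proof of Theorem~\ref{thm:fun-rate}. That proof should produce a bound of the form
\[
E^j \le \tfrac{1}{2\alpha}\bigl(\|\vz^j-\vz^\star\|_2^2 - \|\vz^{j+1}-\vz^\star\|_2^2\bigr) - \gamma\|p(\vz^j)\|_2^2
\]
with some $\gamma\ge 0$ whenever $\alpha<3/(2L)$, where $\vz^\star$ is any fixed point of \eqref{eq:main-method} and $x^\star=\prox_{\alpha r}(\bar{z}^\star)$. The individual $E^j$ is not sign-controlled, which is precisely why Theorem~\ref{thm:fun-rate} only achieved $|E^k|=\mathcal{O}(1/\sqrt{k})$; but summing telescopes the right-hand side to yield $\sum_{j=1}^k E^j \le \tfrac{1}{2\alpha}\|\vz^1-\vz^\star\|_2^2$, a uniform bound on the partial sums that powers the ergodic $\mathcal{O}(1/k)$ rate.

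For the upper bound on $E^k_\mathrm{erg}$, I would apply Jensen's inequality to each of the three convex functions $r$, $\bar{f}$, and $\bar{g}$ at the corresponding ergodic averages (noting $x^{k+1/2}_\mathrm{erg}$ and $\vx^{k+1}_\mathrm{erg}$ are convex combinations of the iterates), which gives $E^k_\mathrm{erg} \le (1/k)\sum_{j=1}^k E^j \le \mathcal{O}(1/k)$. For the lower bound, I would invoke the saddle-point inequality $L(x^{j+1/2},\vx^{j+1},\boldsymbol{\nu}^\star)\ge L(x^\star,\vx^\star,\boldsymbol{\nu}^\star)$; rearranging and using the $z$-update identity $x_i^{j+1}-x^{j+1/2}=z_i^{j+1}-z_i^j$ produces a pointwise lower bound on a near-variant of $E^j$ (with $\bar{f}(\vx^{j+1})$ in place of $\bar{f}(x^{j+1/2})$) whose telescoping sum is $(1/nk)\sum_i(\nu_i^\star)^T(z_i^1-z_i^{k+1}) = \mathcal{O}(1/k)$, using boundedness of $\{\vz^k\}$ from Theorem~\ref{thm:seq}. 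The discrepancy $\bar{f}(x^{j+1/2})-\bar{f}(\vx^{j+1})$ between the variant and the actual $E^j$ is controlled by the $L$-smoothness descent inequality and aggregates into $\mathcal{O}(1/k)$ after averaging via Cauchy--Schwarz against the summability of $\|p(\vz^j)\|_2^2$.

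For $e^k_\mathrm{erg}$, I would control the gap directly:
\[
e^k_\mathrm{erg} - E^k_\mathrm{erg} = \bar{g}(x^{k+1/2}_\mathrm{erg}) - \bar{g}(\vx^{k+1}_\mathrm{erg}) = \tfrac{1}{n}\sum_{i=1}^n\bigl[g_i(x^{k+1/2}_\mathrm{erg}) - g_i(x^{k+1}_{i,\mathrm{erg}})\bigr],
\]
applying Lipschitz continuity to each term. The displacement $x^{k+1/2}_\mathrm{erg}-x^{k+1}_{i,\mathrm{erg}}=(1/k)(z_i^1-z_i^{k+1})$ is $\mathcal{O}(1/k)$ by telescoping the $z$-updates, so $|e^k_\mathrm{erg}-E^k_\mathrm{erg}|=\mathcal{O}(1/k)$, and combined with the upper bound on $E^k_\mathrm{erg}$ and nonnegativity of $e^k_\mathrm{erg}$ (finite because Lipschitz $\bar{g}$ is everywhere-defined) gives $e^k_\mathrm{erg}=\mathcal{O}(1/k)$.

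The main obstacle is the asymmetric treatment of $\bar{f}$ and $\bar{g}$ in both the telescoping inequality and the saddle-point lower bound: $\bar{f}$ is evaluated at the common point $x^{j+1/2}$ while $\bar{g}$ is evaluated at the split iterates $\vx^{j+1}$, so the usual ADMM-style bookkeeping does not apply directly. This is where the Davis--Yin-inspired three-operator analysis of Theorem~\ref{thm:seq} must be used, and it is the reason for the stepsize restriction $\alpha<3/(2L)$. Once the per-iteration inequality is in hand, the remaining arguments are standard ergodic manipulations.
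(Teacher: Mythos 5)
Your overall architecture matches the paper's: Jensen's inequality plus a telescoping sum for the upper bound on $E^k_\mathrm{erg}$, a convexity/saddle-point argument at the optimum for the lower bound, and the Lipschitz estimate $|e^k_\mathrm{erg}-E^k_\mathrm{erg}|\le L_g\|\vx^{k+1}_\mathrm{erg}-\vx^{k+1/2}_\mathrm{erg}\|=L_g\|(\vz^{k+1}-\vz^1)/k\|=\mathcal{O}(1/k)$ for the last claim (this last piece is exactly the paper's argument). However, there is a genuine gap at the crux. The per-iteration inequality you posit,
\[
E^j \le \tfrac{1}{2\alpha}\bigl(\|\vz^j-\vz^\star\|_2^2 - \|\vz^{j+1}-\vz^\star\|_2^2\bigr) - \gamma\|p(\vz^j)\|_2^2,
\]
is not what the analysis delivers. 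What actually comes out (the paper's \eqref{eq:telscpEk}) carries an additional term
$\alpha\,(\tnabla\brr(\vx^\star)+\nabla\brf(\vx^{j+1/2}))^Tp(\vz^j)$,
which is \emph{first order} in $\|p(\vz^j)\|$, of either sign, and therefore cannot be absorbed into $-\gamma\|p(\vz^j)\|^2$. Controlling the partial sums of this term is the main technical content of the theorem: one splits $\nabla\brf(\vx^{j+1/2})=\nabla\brf(\vx^\star)+(\nabla\brf(\vx^{j+1/2})-\nabla\brf(\vx^\star))$; the constant part telescopes because $\sum_{j=1}^k\alpha p(\vz^j)=\vz^1-\vz^{k+1}$ is bounded; the difference part is handled by Young's inequality together with the summability of \emph{both} $\sum_j\|p(\vz^j)\|^2$ and $\sum_j\|\nabla\brf(\vx^{j+1/2})-\nabla\brf(\vx^\star)\|^2$ from Lemma~\ref{lem:descent}. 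The second summability statement is precisely why the paper retains the stepsize bound $\alpha<3/(2L)$ instead of the $\alpha<2/L$ that would otherwise suffice for Theorem~\ref{thm:seq}; your proposal never invokes it, and without it the uniform bound on $\sum_j E^j$ does not follow. You correctly name the $\brf$-at-$x^{j+1/2}$ versus $\brg$-at-$\vx^{j+1}$ asymmetry as the obstacle, but deferring its resolution to ``the Davis--Yin-inspired analysis'' leaves the key step unproved.

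A secondary remark: your lower bound via the Lagrangian saddle-point inequality, with $\brf$ evaluated at $\vx^{j+1}$ and a smoothness correction back to $x^{j+1/2}$, can be made to work but is more involved than necessary. The paper instead lower-bounds $E^k_\mathrm{erg}$ directly by convexity at $\vx^\star$ using subgradients satisfying $\tnabla\brr(\vx^\star)+\nabla\brf(\vx^\star)+\tnabla\brg(\vx^\star)=0$, which collapses to
$(\tnabla\brr(\vx^\star)+\nabla\brf(\vx^\star))^T(\vx^{k+1/2}_\mathrm{erg}-\vx^{k+1}_\mathrm{erg})=\mathcal{O}(1/k)$
by the same telescoping identity, with no $\brf$-discrepancy to repair.
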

Finally, under rather strong conditions on the problems, linear convergence can also be shown.
\begin{theorem}\label{thm:lincvg}
Assume the setting of Theorem \ref{thm:seq}.
Furthermore, assume
$\bar{g}$ is differentiable with Lipschitz continuous gradient.
If one (or more) of $r$, $\bar{g}$, or $\bar{f}$ is strongly convex,
then \eqref{eq:main-method} converges linearly in the sense that
\[
\|\vz^k-\vz^\star\|_2^2\le \mathcal{O}(e^{-Ck})
\]
for some $C>0$. Consequently, $|E^k|$ and $e^k$ also converge linearly. 
\end{theorem}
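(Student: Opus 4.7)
The plan is to establish a strict contraction for the fixed-point operator $T = I - \alpha p$ of \eqref{eq:main-method}. From Lemma~\ref{lm:encoding} we know every fixed point $\vz^\star$ of $T$ encodes a primal-dual solution, so linear convergence of $\|\vz^k - \vz^\star\|_2^2$ would imply the remaining claims about $|E^k|$ and $e^k$ by combining with Theorem~\ref{thm:fun-rate} and Corollary~\ref{cor:fun-rate}: since $p$ is Lipschitz (as a composition of nonexpansive proximals with a Lipschitz gradient), $\|p(\vz^k)\|_2 \le L_p\|\vz^k-\vz^\star\|_2$, and each term contributing to $E^k$ or $e^k$ can be bounded by a Lipschitz factor times $\|\vz^k-\vz^\star\|_2$.

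First I would revisit the fundamental descent/nonexpansiveness inequality behind Theorem~\ref{thm:seq}, which has the schematic form
\[
\|\vz^{k+1}-\vz^\star\|_2^2 \le \|\vz^k-\vz^\star\|_2^2 - c_1\alpha^2\|p(\vz^k)\|_2^2 - c_2\|\vx^{k+1}-\vx^\star\|_2^2 - c_3\|x^{k+1/2}-x^\star\|_2^2
\]
for suitable nonnegative constants (the last two terms arising from cocoercivity / three-point identities used in the deterministic proof). This inequality already gives sublinear convergence; the strategy is to promote it to a contraction by using the additional strong convexity.

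Next I would split into the three strong-convexity cases, in each case strengthening one of the terms on the right. If $r$ is $\mu$-strongly convex, then $\prox_{\alpha r}$ is a strict contraction with constant $1/(1+\alpha\mu)$, producing a lower bound $\|x^{k+1/2}-x^\star\|_2^2 \ge \beta\|\bar{z}^k-\bar{z}^\star\|_2^2$, which can be lifted to $\|\vz^k-\vz^\star\|_2^2$ by exploiting the averaging structure together with the added regularity of $\bar{g}$. If $\bar{g}$ is $\mu$-strongly convex (and, by hypothesis, differentiable with Lipschitz gradient), an analogous contractive estimate on the $g_i$-proximal steps strengthens the $\|\vx^{k+1}-\vx^\star\|_2^2$ term. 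If $\bar{f}$ is $\mu$-strongly convex, the combination of strong monotonicity of $\nabla\bar{f}$ and cocoercivity (from the Lipschitz gradient) makes the forward step $I-\alpha\nabla f_i$ a strict contraction, and this gets absorbed into $\|p(\vz^k)\|_2^2$. In each case, combining with the base inequality yields
\[
\|\vz^{k+1}-\vz^\star\|_2^2 \le (1-C)\|\vz^k-\vz^\star\|_2^2,
\]
and iterating gives the claimed $\mathcal{O}(e^{-Ck})$ rate.

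The main obstacle I anticipate is handling the case where strong convexity lives only in $\bar{f}$. The forward step appears inside the $g_i$-prox, and $\bar{f}$ is only a $1/n$-weighted average, so the per-coordinate contraction is $\mathcal{O}(\mu/n)$ at best; showing this is enough, and that it is not washed out by the subsequent averaging in $x^{k+1/2}$, is the delicate step. The Lipschitz-gradient hypothesis on $\bar{g}$ is what lets one interchange between $\bar{z}$-distance and $\vx$-distance in this case (by writing the $g_i$-prox as $I$ minus a cocoercive operator), and I expect this to be where the assumption is actually used; careful bookkeeping of the constants, rather than any new conceptual ingredient, will be what the proof ultimately demands.
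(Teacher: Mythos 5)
Your overall architecture matches the paper's: strengthen the one-step descent inequality with the extra quadratic terms that strong convexity provides, then show the accumulated negative terms dominate $\|\vz^k-\vz^\star\|_2^2$, with the Lipschitz gradient of $\bar g$ serving to pass between distances in $\vz$ and distances in $\vx$. That much is right, and your remark about where the $\bar g$ hypothesis must enter is well placed. But the concrete mechanism you give for closing the loop does not work. When $r$ is strongly convex, contractivity of $\prox_{\alpha r}$ gives an \emph{upper} bound $\|x^{k+1/2}-x^\star\|\le (1+\alpha\mu)^{-1}\|\bar z^k-\bar z^\star\|$, not the lower bound $\|x^{k+1/2}-x^\star\|_2^2\ge\beta\|\bar z^k-\bar z^\star\|_2^2$ you assert; that lower bound is in fact false in general (take $r(x)=|x|+\mu x^2/2$: the prox collapses a whole interval of inputs onto $x^\star=0$, so the left side can vanish while the right side does not). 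Similarly, in the $\brf$-strongly-convex case, saying the contraction of $I-\alpha\nabla f_i$ ``gets absorbed into $\|p(\vz^k)\|_2^2$'' presupposes an estimate of the form $\|p(\vz)\|^2\gtrsim\|\vz-\vz^\star\|^2$, which is a metric-subregularity claim that is neither true here in general nor argued.

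What actually closes the loop in the paper is an identity obtained from the optimality conditions of the two prox steps, not a contraction estimate:
\begin{align*}
\vz^k &= \vx^{k+1/2}-\alpha\big(\nabla\brg(\vx^{k+1})+\nabla\brf(\vx^{k+1/2})\big)+\alpha p(\vz^k),\\
\vz^\star &= \vx^{\star}-\alpha\big(\nabla\brg(\vx^{\star})+\nabla\brf(\vx^{\star})\big),
\end{align*}
so that, subtracting and using Lipschitz continuity of $\nabla\brg$ and $\nabla\brf$,
\[
\|\vz^k-\vz^\star\|^2\le c\Big(\|\vx^{k+1/2}-\vx^\star\|^2+\|\nabla\brf(\vx^{k+1/2})-\nabla\brf(\vx^\star)\|^2+\|p(\vz^k)\|^2\Big).
\]
No single term on the right dominates $\|\vz^k-\vz^\star\|^2$; it is the sum that does, and the strengthened descent inequality controls exactly this sum: the $\|p(\vz^k)\|^2$ and $\brf$-gradient-difference terms are already present from Lemma~\ref{lem:descent}, and strong convexity of whichever of $r$, $\brf$, $\brg$ is assumed supplies the remaining $\|\vx^{k+1/2}-\vx^\star\|^2$ (or $\|\vx^{k+1}-\vx^\star\|^2$) term. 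Your plan is missing this identity, and without it the case analysis cannot be completed as described. The $1/n$ bookkeeping you flag only affects constants and is not the real difficulty.
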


\subsection{Stochastic analysis.}
As it turns out, the condition that guarantees
\eqref{eq:main-sppg}
converges is the same as that of
\eqref{eq:main-method}. In particular, there is not step size reduction!
\begin{theorem}
\label{thm:stc-conv}
Apply the same assumptions in Theorem \ref{thm:seq}. That is, assume $f_1,\dots,f_n$ are differentiable and have $L$-Lipschitz continuous gradients, and
assume
the Lagrangian \eqref{eq:lagrangian}
has a saddle point
and $0<\alpha<3/(2L)$.
Then the sequence
$\|p(\vz^k)\|_2\rightarrow 0$ with probability one at the rate
\[
\min_{i=0,\dots,k}\EE\|p(\vz^i)\|_2^2\le\mathcal{O}(1/k).
\]
Furthermore $\vz^k\rightarrow \vz^\star$,
$x^{k+1/2}\rightarrow x^\star$, and
$x^{k+1}_i\rightarrow x^\star$ for all $i=1,\dots,n$
with probability one.
\end{theorem}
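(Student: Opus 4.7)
\textbf{Proof proposal for Theorem \ref{thm:stc-conv}.}
The plan is to lift the deterministic descent structure of \eqref{eq:main-method} to the stochastic setting via a supermartingale (Robbins--Siegmund) argument, exploiting the fact that \eqref{eq:main-sppg} is a uniform block-coordinate variant of the deterministic iteration $\vz^{k+1}=\vz^k-\alpha p(\vz^k)$. Concretely, writing $\vz^{k+1}=\vz^k-\alpha\,(p(\vz^k))_{[i(k)]}$ where $(\cdot)_{[i(k)]}$ zeros out all blocks except $i(k)$, and taking the conditional expectation over $i(k)\sim\mathrm{Uniform}\{1,\dots,n\}$ given the natural filtration $\mathcal{F}^k$, I would first compute
\[
\EE\!\left[\|\vz^{k+1}-\vz^\star\|_2^2\,\big|\,\mathcal{F}^k\right]
=\|\vz^k-\vz^\star\|_2^2-\tfrac{2\alpha}{n}\langle \vz^k-\vz^\star,\,p(\vz^k)\rangle+\tfrac{\alpha^2}{n}\|p(\vz^k)\|_2^2.
\]

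The crucial input from the deterministic theory is the key one-step inequality that should already be established for Theorem~\ref{thm:seq}: for $0<\alpha<3/(2L)$ there exists $c=c(\alpha,L)>0$ such that, at every fixed point $\vz^\star$,
\[
2\alpha\langle\vz^k-\vz^\star,\,p(\vz^k)\rangle\;\ge\;(1+c)\,\alpha^2\|p(\vz^k)\|_2^2.
\]
This is precisely the inequality that makes $\|\vz^k-\vz^\star\|_2^2$ Fej\'er monotone under the deterministic update, and it transfers verbatim to our stochastic setting because $p(\vz^k)$ is evaluated at the full $\vz^k$. Plugging it into the expression above yields
\[
\EE\!\left[\|\vz^{k+1}-\vz^\star\|_2^2\,\big|\,\mathcal{F}^k\right]
\le\|\vz^k-\vz^\star\|_2^2-\tfrac{c\alpha^2}{n}\|p(\vz^k)\|_2^2.
\]
Thus $\|\vz^k-\vz^\star\|_2^2$ is a nonnegative supermartingale, and by Robbins--Siegmund it converges almost surely to some finite limit and $\sum_{k=0}^\infty\|p(\vz^k)\|_2^2<\infty$ a.s., which in particular gives $\|p(\vz^k)\|_2\to 0$ a.s. Telescoping the same inequality in unconditional expectation yields $\sum_{i=0}^k\EE\|p(\vz^i)\|_2^2\le \tfrac{n}{c\alpha^2}\|\vz^0-\vz^\star\|_2^2$, which immediately gives the $\mathcal{O}(1/k)$ rate on $\min_{i\le k}\EE\|p(\vz^i)\|_2^2$.

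For the a.s.\ convergence of $\vz^k$ to a (random) fixed point $\vz^\star$, I would invoke a stochastic Opial-type lemma: the supermartingale inequality above holds simultaneously for every fixed point, so $\|\vz^k-\vz^\star\|_2$ converges a.s.\ for each $\vz^\star$ in the fixed point set; since $p$ is continuous (composition of $\prox_{\alpha r}$, the $\prox_{\alpha g_i}$, and the Lipschitz gradients) and $\|p(\vz^k)\|_2\to 0$, every cluster point of $\vz^k$ is a fixed point, and Opial's lemma then forces $\vz^k$ to converge a.s.\ to a single fixed point $\vz^\star$. Applying Lemma~\ref{lm:encoding} and continuity of $\prox_{\alpha r}$ gives $x^{k+1/2}=\prox_{\alpha r}(\bar{z}^k)\to x^\star$ a.s., and continuity of $\prox_{\alpha g_i}$ with Lipschitz continuity of $\nabla f_i$ gives $x_i^{k+1}\to x^\star$ a.s.

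The main obstacle is the deterministic one-step inequality with the explicit constant $c>0$ under $\alpha<3/(2L)$; once this is in hand the stochastic step is essentially bookkeeping. The subtle point there is that $p$ is the fixed-point residual of a three-operator splitting (Davis--Yin on a product space), which is only \emph{averaged} rather than firmly nonexpansive, so one must carefully combine the cocoercivity of $\nabla f_i$ with the firm nonexpansiveness of the proximal operators to squeeze out the factor $(1+c)$ instead of the weaker $1$ that would follow from nonexpansiveness alone. This is exactly the technical place where the bound $\alpha<3/(2L)$ (rather than the more familiar $\alpha<2/L$) enters.
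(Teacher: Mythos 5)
Your overall strategy is exactly the paper's: write \eqref{eq:main-sppg} as $\vz^{k+1}=\vz^k-\alpha p(\vz^k)_{[i(k)]}$, take conditional expectations to get $\EE_k\|\vz^{k+1}-\vz^\star\|^2\le\|\vz^k-\vz^\star\|^2-\tfrac{\alpha^2}{n}(1-\tfrac{2\alpha L}{3})\|p(\vz^k)\|^2-\dots$, apply Robbins--Siegmund to obtain a.s.\ summability of $\|p(\vz^k)\|^2$, telescope in expectation for the $\mathcal{O}(1/k)$ rate, and finish with an Opial-type argument. Your key one-step inequality $2\alpha\langle\vz^k-\vz^\star,p(\vz^k)\rangle\ge(1+c)\alpha^2\|p(\vz^k)\|^2$ is precisely the specialization of the paper's inequality \eqref{eq:key3} to $\tilde{\vz}=\vz^\star$ (with $c=1-\tfrac{2\alpha L}{3}>0$), and your diagnosis of where $\alpha<3/(2L)$ enters is accurate.

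There is, however, one genuine gap in your almost-sure convergence step. You write that the supermartingale inequality ``holds simultaneously for every fixed point, so $\|\vz^k-\vz^\star\|_2$ converges a.s.\ for each $\vz^\star$.'' The inequality does hold for every fixed point, but Robbins--Siegmund gives, for \emph{each} fixed point $\vz^\star$, convergence of $\|\vz^k-\vz^\star\|$ outside an exceptional null set $\Omega(\vz^\star)^{\mathsf c}$ that \emph{depends on} $\vz^\star$. The fixed-point set is in general uncountable, so the union of these null sets need not be null, and you cannot directly invoke Opial's lemma, which requires a single measure-one event on which the distance to \emph{every} fixed point converges. The paper closes this gap by using separability of $\reals^{dn}$: it picks a countable dense subset $\{\vz_1^\star,\vz_2^\star,\dots\}$ of the fixed-point set, intersects the countably many measure-one sets to get $\Omega_c$, and then extends convergence of $\|\vz^k-\vz^\star\|$ to arbitrary fixed points $\vz^\star$ on $\Omega_c$ by a triangle-inequality approximation argument. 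If your ``stochastic Opial-type lemma'' is meant to be a citation to a result that already packages this (e.g.\ the stochastic quasi-Fej\'er framework of Combettes and Pesquet, which the paper adapts), you should say so explicitly; as written, the step is not justified.
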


The expected objective rates of 
\eqref{eq:main-sppg} also match those of 
\eqref{eq:main-method}.

\begin{theorem}\label{thm:storates}
Under the same setting of Theorem \ref{thm:stc-conv}, we have
\begin{align}
\EE|E^k|\le
\mathcal{O}(1/\sqrt{k})\quad\text{and}\quad |E^k_\mathrm{erg}|\le \mathcal{O}(1/k).
\end{align}
Further assume $\bar{g}(x)$ is Lipschitz continuous with
parameter $L_g$. Then
\[
\EE e^k_\mathrm{erg}\le \mathcal{O}(1/k).
\]
\end{theorem}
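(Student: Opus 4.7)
The plan is to extend the deterministic proofs of Theorems \ref{thm:fun-rate} and \ref{thm:erg} and Corollary \ref{cor:fun-rate} to the stochastic setting. The key observation is that the pathwise one-step inequalities in those proofs — which relate $E^k$, $E^k_\mathrm{erg}$, and $e^k_\mathrm{erg}$ to $\|p(\vz^k)\|_2$ and $\|\vz^k-\vz^\star\|_2$ using convexity of $r+\bar{f}+\bar{g}$, the saddle-point identities of Lemma \ref{lm:encoding}, and the definition \eqref{eq:p-mapping} of $p$ — hold sample-by-sample for \eqref{eq:main-sppg} as well. The stochastic update enters only when we take expectations, at which point the single-coordinate sampling introduces a conditionally mean-zero noise (by the uniform distribution of $i(k)$) together with an extra $1/n$ factor in every per-step decrement.

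For $\EE|E^k|\le\mathcal{O}(1/\sqrt{k})$, I would combine the pathwise bound $|E^k|\le C\,\|p(\vz^k)\|_2\,\|\vz^k-\vz^\star\|_2$ underlying Theorem \ref{thm:fun-rate} with Cauchy--Schwarz in expectation to get $\EE|E^k|\le C\sqrt{\EE\|p(\vz^k)\|_2^2}\cdot\sqrt{\EE\|\vz^k-\vz^\star\|_2^2}$. The second factor is uniformly bounded by the Fejer-type supermartingale inequality $\EE\|\vz^{k+1}-\vz^\star\|_2^2\le\EE\|\vz^k-\vz^\star\|_2^2$ (the workhorse of Theorem \ref{thm:stc-conv}); the first factor is $\mathcal{O}(1/\sqrt{k})$ via the same drift inequality behind Theorem \ref{thm:stc-conv}. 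For $\EE|E^k_\mathrm{erg}|\le\mathcal{O}(1/k)$, I would sum the per-iterate inequality
\[
E^j \;\le\; \frac{1}{2\alpha}\bigl(\|\vz^j-\vz^\star\|_2^2-\|\vz^{j+1}-\vz^\star\|_2^2\bigr)+\xi^j,
\]
where $\xi^j$ is the conditionally mean-zero noise captured by $\EE[\xi^j\mid\vz^j]=0$, take expectations so that $\EE\xi^j=0$, apply Jensen on the convex functional $r+\bar{f}+\bar{g}$ to commute averaging with the objective, and divide by $k$; a symmetric lower bound supplies the absolute value. Finally, for $\EE e^k_\mathrm{erg}\le\mathcal{O}(1/k)$, I would mimic the argument of Corollary \ref{cor:fun-rate}: under the $L_g$-Lipschitz hypothesis on $\bar{g}$, $|e^k_\mathrm{erg}-E^k_\mathrm{erg}|$ is bounded by $\alpha L_g$ times the ergodic average of $\|(p(\vz^j))_{i(j)}\|_2$, whose expectation is $\mathcal{O}(1/k)$ by Jensen and the summability of $\EE\|p(\vz^j)\|_2^2$ implied by the same drift inequality.

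The main obstacle I expect is identifying the correct stochastic Lyapunov drift inequality that plays the role of the deterministic telescoping identity. Because \eqref{eq:main-sppg} updates only a single coordinate of $\vz$ per step, the natural candidate $\EE\|\vz^k-\vz^\star\|_2^2$ drops by only a factor of $1/n$ relative to the full-batch case, and one must verify carefully that this drift still dominates both $\EE\|p(\vz^k)\|_2^2$ and $\EE E^k$ (up to the mean-zero noise $\xi^j$) so that summing reproduces the telescoping structure of the deterministic arguments rather than degrading by a factor of $n$. Once this drift inequality is in hand — which is essentially the content of the proof of Theorem \ref{thm:stc-conv} — the claimed rates follow from the standard Cauchy--Schwarz and Jensen manipulations already used in the deterministic proofs.
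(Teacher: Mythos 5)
Your proposal follows essentially the same route as the paper's proof: bound $E^k$ pathwise in terms of $p(\vz^k)$ and $\vz^k-\vz^\star$ (being careful not to substitute $\vz^{k+1}$ for $\vz^k-\alpha p(\vz^k)$, which fails under the single-coordinate update), apply Cauchy--Schwarz in expectation, control $\EE\|\vz^k-\vz^\star\|_2^2$ by the supermartingale inequality behind Theorem~\ref{thm:stc-conv} and $\EE\|p(\vz^k)\|_2^2$ by the $\mathcal{O}(1/k)$ drift bound, and telescope in conditional expectation (absorbing the extra factor of $n$ into the constants) for the ergodic rate. The treatment of $\EE|E^k|$ and $\EE|E^k_\mathrm{erg}|$ is sound.

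The one step to push back on is the last. You bound $|e^k_\mathrm{erg}-E^k_\mathrm{erg}|$ by $\alpha L_g$ times ``the ergodic average of $\|(p(\vz^j))_{i(j)}\|_2$'' and claim this has expectation $\mathcal{O}(1/k)$ by Jensen and the summability of $\EE\|p(\vz^j)\|_2^2$. Summability of the squares only yields $\frac{1}{k}\sum_{j\le k}\EE\|p(\vz^j)_{[i(j)]}\|_2 \le \frac{1}{k}\bigl(k\sum_{j}\EE\|p(\vz^j)_{[i(j)]}\|_2^2\bigr)^{1/2}=\mathcal{O}(1/\sqrt{k})$, which is too weak. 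What the $L_g$-Lipschitz bound actually produces is $L_g\|\vx^{k+1}_\mathrm{erg}-\vx^{k+1/2}_\mathrm{erg}\|_2$, i.e., the \emph{norm of the ergodic average} of the increments rather than the average of the norms; it is this quantity that telescopes, exactly as in \eqref{eq:bEkbek}. In the stochastic setting, $\frac{1}{k}\sum_{j}\alpha p(\vz^j)$ equals $n$ times the telescoping quantity $\frac{1}{k}(\vz^1-\vz^{k+1})$ plus $\frac{n}{k}$ times a martingale whose second moment is uniformly bounded precisely by the summability of $\EE\|p(\vz^j)\|_2^2$; this recovers the $\mathcal{O}(1/k)$ bound in expectation. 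With that replacement your argument closes and matches the paper's (admittedly terse) treatment of the ergodic stochastic rates.
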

Due to space limitation, we state without proof that, under the setting of Theorem \ref{thm:lincvg},  
\eqref{eq:main-sppg} yields linearly convergent $\EE \|\vz^k-\vz^\star\|_2$, $\EE |E^k|$, and $\EE |e^k|$.

\section{Applications of PPG}
\label{sc:applications}
To utilize \eqref{eq:main-method},
a given optimization problem often needs to be recast
into the form of \eqref{eq:main-prob}.
In this section, we show some techniques for this
while presenting some interesting applications.

All examples
presented in this section
are naturally posed as
\begin{equation}
\mbox{minimize}
\quad
r(x)+\frac{1}{n}\sum_{i=1}^nf_i(x)+
\frac{1}{m}\sum^m_{j=1}g_j(x),
\label{eq:natural-prob}
\end{equation}
where $n\ne m$.
There is more than one way to recast Problem~\eqref{eq:natural-prob}
into the form of Problem~\eqref{eq:main-prob}.
%which we discuss in the appendix.

%One way to recast this problem into the form of \eqref{eq:main-prob} is
%\[
%\mbox{minimize}
%\quad
%r(x)+\frac{1}{n+m}
%\sum_{i=1}^{m+n}
%\tilde{f}_i(x)+\tilde{g}_i(x),
%\]
%where $\tilde{f}_i=(m+n)f_i$ and $\tilde{g}_i=0$ for $i=1,\dots,n$
%and
%$\tilde{f}_{n+j}=0$ and $\tilde{g}_{n+j}=(m+n)g_{n+j}$ for $j=1,\dots,m$.
%and we can apply \eqref{eq:main-method}.
%\begin{align*}
%x^{k+1/2}&= \prox_{\alpha r}\left(\frac{1}{n+m}
%\left(\sum^n_{i=1}z_i^k+\sum^m_{j=1}z_i^{\prime k}\right)\right)\nonumber\\
%x_i^{k+1}&=2x^{k+1/2}-z_i^k-\alpha(m+n) \nabla f_i(x^{k+1/2})\nonumber\\
%x_j^{\prime k+1}&=\prox_{\alpha (m+n) g_j}\left(2x^{k+1/2}-z_i^k\right)\nonumber\\
%z_i^{k+1}&=z_i^k+x_i^{k+1}-x^{k+1/2}\\
%z_j^{\prime k+1}&=z_j^{\prime k}+x_j^{\prime k+1}-x^{k+1/2},
%\end{align*}

Among these options, the most symmetric one, loosely speaking, is
\[
\mbox{minimize}
\quad
r(x)+\frac{1}{mn}
\sum^n_{i=1}
\sum^m_{j=1}
\big(f_i(x)+g_j(x)\big),
\]
which leads to the method
\begin{align*}
x^{k+1/2}&= \prox_{\alpha r}\left(\frac{1}{mn}
\sum^n_{i=1}
\sum^m_{j=1}
z_{ij}^k\right)
\\
x_{ij}^{k+1}&=
\prox_{\alpha g_j}\left(2x^{k+1/2}-z_{ij}^k-\alpha  \nabla f_i(x^{k+1/2})\right)
\\
z_{ij}^{k+1}&=z_{ij}^k+x_{ij}^{k+1}-x^{k+1/2}.
\end{align*}
In general, the product $mn$ can be quite large, and if so,
this approach is likely impractical.
In many examples, however,
$mn$ is not large as since $n=1$ or $m$ is small.

Another option, feasible when neither $n$ nor $m$ is small, is
\[
\mbox{minimize}
\quad
r(x)+\frac{1}{m+n}
\left(\sum^n_{i=1}((m+n)/n)f_i(x)
+
\sum^m_{j=1}((m+n)/m)g_j(x)\right),
\]
which leads to the method
\begin{align*}
x^{k+1/2}&= \prox_{\alpha r}\left(\frac{1}{n+m}
\left(
\sum^n_{i=1}y_i^k
+
\sum^m_{j=1}z_j^k\right)\right)\\
y_i^{k+1}&=
x^{k+1/2}-\alpha ((m+n)/n)\nabla f_i(x^{k+1/2})
\\
z_{j}^{k+1}&=z_{j}^k-x^{k+1/2}+
\prox_{\alpha ((m+n)/m)g_j}\left(2x^{k+1/2}-z_{j}^k\right).
\end{align*}

\vspace{0.1in}\noindent\textbf{Overlapping group lasso.}
Let $\mathcal{G}$ be a collection of groups of indices.
So $G\subseteq \{1,2,\dots,d\}$
for each $G\in \mathcal{G}$.
The groups can be overlapping, i.e.,
$G_1\cap G_2\ne \emptyset$ is possible
for $G_1,G_2\in \mathcal{G}$ and $G_1\ne G_2$.

We let $x_G\in \reals^{|G|}$ denote a subvector corresponding to the indices
of $G\in \mathcal{G}$,
where $x\in \reals^d$ is the whole vector.
So the entries $x_i$ for $i\in G$ form the vector $x_G$.

The overlapping group lasso problem is
\[
\mbox{minimize}
\quad
\frac{1}{2}\|Ax-b\|_2^2
+
\lambda_1
\sum_{G\in \mathcal{G}}\|x_G\|_{2},
\]
where $x\in \reals^d$ is the optimization variable,
$A\in \reals^{m\times d}$ and $b\in \reals^{m}$ are problem data,
and $\lambda_1>0$ is a regularization parameter.
As it is, the regularizer (the second term)
is not proximable when the groups overlap.

Partition the collection of groups $\mathcal{G}$ into
$n$ non-overlapping collections.
So $\mathcal{G}$ is a disjoint union of
$\mathcal{G}_1, \dots, \mathcal{G}_n$,
and if $G_1,G_2\in \mathcal{G}_i$ and $G_1\ne G_2$
then $G_1\cap G_2=\emptyset$ for $i=1,\dots,n$.
With some abuse of notation, we write
\[
\mathcal{G}_i^\mathsf{c}
=
\{i\in\{1,\dots,d\}\,|\,
i\notin G\text{ for all }G\in \mathcal{G}_i
\}
\]
Now we recast the problem into the form of \eqref{eq:main-prob}
\[
\mbox{minimize}
\quad
\frac{1}{2}\|Ax-b\|_2^2
+
\frac{1}{n}
\sum_{i=1}^n
\left(\sum_{G\in \mathcal{G}_i}\lambda_2 \|x_G\|_2\right),
\]
where $\lambda_2=n\lambda_1$.
The regularizer (the second term) is now a sum of $n$ proximable terms.

For example, we can have a setup with $d=42$, $n=3$,
$\mathcal{G}=\mathcal{G}_1\cup \mathcal{G}_2\cup \mathcal{G}_3$,
and
\begin{align*}
\mathcal{G}_1&= \{\{1,\dots,9\},\{10,\dots,18\},\{19,\dots,27\},\{28,\dots,36\}\}\\
\mathcal{G}_2& = \{\{4,\dots,12\},\{13,\dots,21\},\{22,\dots,30\},\{31,\dots,39\}\}\\
\mathcal{G}_3& = \{\{7,\dots,15\},\{16,\dots,24\},\{25,\dots,33\},\{34,\dots,42\}\}
\end{align*}
The groups within $\mathcal{G}_i$ do not overlap for each $i=1,2,3$,
and
$\mathcal{G}_2^\mathsf{c}=\{1,2,3,40,41,42\}$.

We view the first term as the $r$ term, the second term as the
sum of $g_1,\dots,g_n$, and $f_1=\dots=f_n=0$
in the notation of \eqref{eq:main-prob},
and apply \eqref{eq:main-method}:
\begin{align*}
x^{k+1/2}&=
(I+\alpha A^TA)^{-1}
\left(\alpha A^Tb+
\frac{1}{n}\sum_{i=1}^nz_i^k
\right)\\
z_i^{k+1/2}&=2x^{k+1/2}-z_i^k\\
(x_i^{k+1})_G&=
u_{\alpha\lambda_2 }
\left((z_i^{k+1/2})_G\right)
\quad
\text{for }
G\in \mathcal{G}_i
\\
(x_i^{k+1})_j&=
(z_i^{k+1/2})_j
\quad
\text{for }
j\in \mathcal{G}_i^\mathsf{c}
\\
z_i^{k+1}&=z_i^k+x_i^{k+1}-x^{k+1/2},
\end{align*}
where the indices $i$ implicitly run through $i=1,\dots,n$,
and $u_\alpha$, defined in the Section~\ref{s:theory},
is the vector soft-thresholding operator.

To reduce the cost of computing $x^{k+1/2}$,
we can precompute and store the Cholesky factorization
of the positive definite matrix $I+\alpha A^TA$
(which costs $\mathcal{O}(m^2d+d^3)$)
and the matrix-vector product $A^Tb$
(which costs $\mathcal{O}(md)$).
This cost is paid upfront once,
and the subsequent iterations
can be done in $\mathcal{O}(d^2+dn)$ time.

For recent work on overlapping group lasso, see
\cite{yuan2006,zhao2009,kim2010,mairal2010,jun2010,yuan2011, jenatton2011,chen2012,bien2013, yang2014}.
%\commwy{Any non-numerical comparison such as restrictions of the popular algorithms?}

\vspace{0.1in}\noindent\textbf{Low-rank and sparse matrix completion.}
Consider the setup where we partially observe
a matrix $M\in \reals^{d_1\times d_2}$
on the set of indices $\Omega$.
More precisely, we observe $M_{ij}$ for $(i,j)\in \Omega$
while $M_{ij}$ for $(i,j)\notin \Omega$ are unknown.
We assume $M$ has a low-rank plus sparse structure,
i.e., $M=L^\mathrm{true}+S^\mathrm{true}$
with $L^\mathrm{true}$ is low-rank and $S^\mathrm{true}$ is sparse.
Here $L^\mathrm{true}$ models the true underlying structure
while $S^\mathrm{true}$ models outliers.
Furthermore, let's assume $0\le M_{ij}\le 1$ for all $(i,j)$.
The goal is to estimate
the unobserved entries of $M$,
i.e., $M_{ij}$ for $(i,j)\notin \Omega$.

To estimate $M$, we solve the following regularized regression
\begin{align*}
&\mbox{minimize}
\,\,
\lambda_1\|L\|_*
+
\lambda_2\|S\|_1
+
\sum_{(i,j)\in \Omega}
\ell(S_{ij}+L_{ij}-M_{ij})\\
&\mbox{subject to}\,\,
0\le S+L\le 1,
\end{align*}
where $S,L\in \reals^{d_1\times d_2}$ are the optimization variables,
the constraint $0\le S+L\le 1$ applies element-wise,
and $\lambda_1,\lambda_2>0$ are regularization parameters.
The constraint is proximable by Lemma~\ref{lm:prox-lemma},
and we can use \eqref{eq:main-method} either when $\ell$ is differentiable or when $\ell + I_{[0,1]}$ is proximable.

We view $n=1$, the first term as the $r$ term, the second term as the $g_1$ term,
and the last term as the $f_1$ term
in the notation of \eqref{eq:main-prob},
and apply \eqref{eq:main-method}:
\begin{align*}
L^{k+1/2}&=
t_\alpha\left(Z^{k}\right)\\
S^{k+1/2}_{ij}&=s_\alpha\left(
Y^{k}_{ij}\right)\quad\text{ for all }(i,j)\\
Z^{k+1/2}_{ij}&=
\left\{
\begin{array}{ll}
2L^{k+1/2}_{ij}-Z^k_{ij}&\text{ for }(i,j)\notin \Omega\\
2L^{k+1/2}_{ij}-Z^k_{ij}-\alpha (L^{k+1/2}_{ij}+S^{k+1/2}_{ij}-M_{ij})&\text{ for }(i,j)\in \Omega
\end{array}\right.\\
Y^{k+1/2}_{ij}&=
\left\{
\begin{array}{ll}
2S^{k+1/2}_{ij}-Y^k_{ij}&\text{ for }(i,j)\notin \Omega\\
2S^{k+1/2}_{ij}-Y^k_{ij}-\alpha (L^{k+1/2}_{ij}+S^{k+1/2}_{ij}-M_{ij})&\text{ for }(i,j)\in \Omega
\end{array}\right.
\\
A^{k+1}&=Z^{k+1/2}+Y^{k+1/2}\\
B^{k+1}&=Z^{k+1/2}-Y^{k+1/2}\\
L^{k+1}_{ij}&=
\frac{1}{2}\left(
\Pi_{[0,1]}(A^{k+1}_{ij})+B^{k+1}_{ij}\right)\quad\text{ for all }(i,j)\\
S^{k+1}_{ij}&=
\frac{1}{2}\left(
\Pi_{[0,1]}(A^{k+1}_{ij})-B^{k+1}_{ij}\right)\quad\text{ for all }(i,j)\\
Z^{k+1}&=Z^k+L^{k+1}-L^{k+1/2}\\
Y^{k+1}&=Y^k+S^{k+1}-S^{k+1/2}.
\end{align*}
$t_\alpha$ and $s_\alpha$,
defined in the Section~\ref{s:theory},
are respectively the matrix and scalar soft-thresholding operators.
$\Pi_{[0,1]}$ is the projection onto the interval $[0,1]$,
and the $L^{k+1}$ and $S^{k+1}$ updates follow from
Lemma~\ref{lm:prox-lemma}.
The only non-trivial operation for this method is computing the SVD
to evaluate $t_\alpha\left(Z^{k}\right)$.
All other operations are elementary and embarrassingly parallel.

For a discussion on low-rank + sparse factorization,
see \cite{chandrasekaran2011}.

\vspace{0.1in}\noindent\textbf{Regression with fused lasso.}
Consider the problem setup where
we have $Ax^\mathrm{true}=b$
and we observe $A$ and $b$.
Furthermore, the coordinates of $x^\mathrm{true}$
are ordered in a meaningful way
and we know a priori that
$|x_{i+1}-x_i|\le \varepsilon$
for $i=1,\dots,d-1$ and some $\varepsilon>0$.
finally, we also know that $x^\mathrm{true}$ is sparse.

To estimate $x$,
we solve the fused lasso problem
\begin{align*}
\mbox{minimize}&\quad
\lambda \|x\|_1+
\frac{1}{n}\sum^n_{i=1}
\ell_i(x)\\
\mbox{subject to}&\quad
|x_{i+1}-x_i|\le \varepsilon,\quad
i=1,\dots,d-1
\end{align*}
where
\[
\ell_i(x)=(1/2)(a^T_ix-y_i)^2
\]
and $x\in \reals^d$ is the optimization variable.

We recast the problem into the form of \eqref{eq:main-prob}
\begin{align*}
&\mbox{minimize}
\quad
\lambda
\|x\|_1\\
&\quad+
\frac{1}{2n}
\left(
\sum^n_{i=1}(\ell_i(x)+g_o(x))
+
\sum^n_{i=1}(\ell_i(x)+g_e(x))
\right)
\end{align*}
where
\begin{align*}
g_o(x)=
\sum_{i=1,3,5,\dots}
I_{[-\varepsilon,\varepsilon]}(x_{i+1}-x_i)\\
g_e(x)=
\sum_{i=2,4,6,\dots}I_{[-\varepsilon,\varepsilon]}(x_{i+1}-x_i)
\end{align*}
and\[
I_{[-\varepsilon,\varepsilon]}(x)=\left\{
\begin{array}{ll}
0&\text{if }|x|\le \varepsilon\\
\infty&\text{otherwise}.
\end{array}
\right.
\]
Since $g_o$ and $g_e$ are proximable
by Lemma~\ref{lm:prox-lemma},
we can apply \eqref{eq:main-method}.

\iffalse
We apply \eqref{eq:main-method} to this setup:
\begin{align*}
x^{k+1/2}&= s_\alpha\left(\frac{1}{2n}
\left(
\sum^n_{i=1}
z^k_{o,i}
+
z^k_{e,i}
\right)
\right)\\
z_{o,i}^{k+1/2}&=2x^{k+1/2}-z_{o,i}^k-\alpha \nabla \ell_i(x^{k+1/2})\\
z_{e,i}^{k+1/2}&=2x^{k+1/2}-z_{e,i}^k-\alpha\nabla \ell_i(x^{k+1/2})\\
s^{k}_i&=z_{o,i+1}^{k+1/2}+z_{o,i}^{k+1/2}\text{ for }i=1,3,5,\dots\\
d^{k}_i&=z_{o,i+1}^{k+1/2}-z_{o,i}^{k+1/2}\text{ for }i=1,3,5,\dots\\
x_{o,i}^{k+1}&=(1/2)(s^k_i-\Pi_{[-\varepsilon,\varepsilon]}(d^k_i))\text{ for }i=1,3,5,\dots\\
x_{o,i+1}^{k+1}&=(1/2)(s^k_i+\Pi_{[-\varepsilon,\varepsilon]}(d^k_i))\text{ for }i=1,3,5,\dots\\
s^{k}_i&=z_{e,i+1}^{k+1/2}+z_{e,i}^{k+1/2}\text{ for }i=2,4,6,\dots\\
d^{k}_i&=z_{e,i+1}^{k+1/2}-z_{e,i}^{k+1/2}\text{ for }i=2,4,6,\dots\\
x_{e,i}^{k+1}&=(1/2)(s^k_i-\Pi_{[-\varepsilon,\varepsilon]}(d^k_i))\text{ for }i=2,4,6,\dots\\
x_{e,i+1}^{k+1}&=(1/2)(s^k_i+\Pi_{[-\varepsilon,\varepsilon]}(d^k_i))\text{ for }i=2,4,6,\dots\\
z_{o,i}^{k+1}&=z_{o,i}^k+x_{o,i}^{k+1}-x^{k+1/2}\\
z_{e,i}^{k+1}&=z_{e,i}^k+x_{e,i}^{k+1}-x^{k+1/2},
\end{align*}
where the indices $i$ implicitly run from $i=1,\dots,n$
unless specified otherwise.
Here $s_\alpha$, defined in the Section~\ref{s:theory},
is the scalar soft-thresholding operators.
$\Pi_{[-\varepsilon,\varepsilon]}$ is the projection onto the interval $[-\varepsilon,\varepsilon]$,
and the $x^{k+1}_{o}$ and $x^{k+1}_{e}$ updates follow from
Lemma~\ref{lm:prox-lemma}.
\fi

For recent work on fused lasso, see
\cite{tibshirani2005, rapaport2008,tibshirani2008,hoefling2010,liu2010, nowak2011,ye2011,zhou2012}.

\vspace{0.1in}\noindent\textbf{Network lasso.}
Consider the problem setup
where we have an undirected graph $G=(E,V)$.
Each node $v\in V$ has a parameter to estimate $x_v\in \reals^d$
and an associated loss function $\ell_v$.
Furthermore, we know that neighbors of $G$ have similar parameters
in the sense that $\|x_u-x_v\|_2$ is small if $\{u,v\}\in E$
and that $x_v$ is sparse for each $v\in V$.

Under this model, we solve the
network lasso problem
\[
\mbox{minimize}
\quad\sum^{}_{v\in V}
\lambda_1\|x_v\|_1+
\ell_v(x_v)
+\sum_{\{u,v\}\in E}\lambda_2\|x_u-x_v\|_2,
\]
where $x_v$ for all $v\in V$ are the optimization variables,
and $\ell_v(x_v)$ for all $v\in V$ a differentiable loss function,
and $\lambda_1,\lambda_2>0$ are regularization parameters
\cite{hallac2015}.

Say the $G$ has an edge coloring $E_1,\dots,E_C$.
So $E_1,\dots ,E_C$ partitions $E$ such that
if $\{u,v\}\in E_c$ then $\{u,v'\}\notin E_c$
for any $v'\ne v$ and $c=1,\dots,C$.
Figure~\ref{fig:coloring} illustrates this definition.
(The chromatic index $\chi'(G)$ is the smallest possible value of $C$,
but $C$ need not be $\chi'(G)$.)
With the edge coloring, we recast the problem into the form of \eqref{eq:main-prob}
\begin{align*}
&\mbox{minimize}
\quad
\sum_{v\in V}\lambda_1\|x_v\|_1\\
&\quad +
\frac{1}{C}
\sum^C_{c=1}
\left(
\sum_{v\in V}\ell_v(x_v)
+\sum_{\{u,v\}\in E_c}\lambda_3\|x_u-x_v\|_2\right),
\end{align*}
where $\lambda_3=C\lambda_2$.

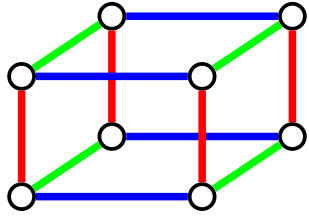
\begin{figure}
\begin{center}
    \begin{tikzpicture}[
    scale=.8,
            line width=1mm, % line style
        ]

        \node[state, line width=.5mm, minimum size=.2cm] (1) at (0,0) {};
        \node[state, line width=.5mm, minimum size=.2cm] (4) at (3,0)  {};
        \node[state,  line width=.5mm,minimum size=.2cm] (3) at (3,2)  {};
        \node[state,  line width=.5mm,minimum size=.2cm] (2) at (0,2) {};
        \node[state,  line width=.5mm,minimum size=.2cm] (7) at (1.5,3)  {};
        \node[state,  line width=.5mm,minimum size=.2cm] (5) at (4.5,1)  {};
        \node[state, line width=.5mm, minimum size=.2cm] (6) at (4.5,3)  {};
        \node[state,  line width=.5mm,minimum size=.2cm] (8) at (1.5,1)  {};

        \path[-, red] (1) edge (2);
        \path[-, red] (7) edge (8);
        \path[-, red] (5) edge (6);
        \path[-, blue] (2) edge  (3);
        \path[-, blue] (1) edge  (4);
        \path[-, blue] (7) edge  (6);
        \path[-, blue] (8) edge  (5);
        \path[-, green] (2) edge  (7);
        \path[-, green] (3) edge  (6);
        \path[-, green] (4) edge  (5);
        \path[-, green] (1) edge  (8);
        \path[-, red] (3) edge (4);
    \end{tikzpicture}
\caption{An edge coloring of the hypercube graph $Q_3$.}
\label{fig:coloring}
\end{center}
\end{figure}

We view the $\ell_1$ regularizer as the $r$ term,
the loss functions as the $f$ term,
and the summation over $E_c$ as the
$g$ terms
in the notation of \eqref{eq:main-prob},
and apply \eqref{eq:main-method}:
\begin{align*}
x^{k+1/2}_v&= s_{\alpha\lambda_1}\left(\frac{1}{C}\sum^C_{c=1}z_{cv}^k\right)\\
z_{cv}^{k+1/2}&=2x^{k+1/2}_v-z_{cv}^k-\alpha \nabla \ell_v(x^{k+1/2}_v)\\
s^k_c&=z_{cu}^{k+1/2}+z_{cv}^{k+1/2}\\
d^k_c&=z_{cu}^{k+1/2}-z_{cv}^{k+1/2}\\
x_{cu}^{k+1}&=s^k_c+u_{\alpha\lambda_3}\left(d^k_c\right)
\quad\text{for }\{u,v\}\in E_c\\
x_{cv}^{k+1}&=s^k_c-u_{\alpha\lambda_3}\left(d^k_c\right)
\quad\text{for }\{u,v\}\in E_c\\
x^{k+1}_{cv}&=z_{cv}^{k+1/2}
\quad\text{for }
\{v,u'\}\notin E_c \text{ for all }u'\in V\\
z_{cv}^{k+1}&=z_{cv}^k+x_{cv}^{k+1}-x^{k+1/2}_v,
\end{align*}
where the colors $c$ implicitly run through $c=1,\dots, C$
unless specified otherwise
and the nodes $v$ implicitly run through all $v\in V$.
Here
$s_\alpha$ and $u_\alpha$, defined in the Section~\ref{s:theory},
are respectively the scalar and vector soft-thresholding operators.

Although this algorithm, as stated, seemingly maintains $C$ copies of $x_v$
we can actually simplify it so that $v$
maintains $\min\{\deg(v)+1,C\}$ copies of $x_v$
for all $v\in V$.
Since $2|E|/|V|$ is the average degree of $G$,
storage requirement is $\mathcal{O}(|V|+|E|)$ when simplified.

Let each node have a set $N\subseteq\{1,\dots,C\}$
such that $c\in N$ if there is a neighbor connected through an edge with color $c$.
Write $N^\mathsf{c}=\{1,\dots,C\}\backslash N$.
With this notation, we can rewrite the algorithm in a simpler, vertex-centric manner:
\begin{align*}
&\text{FOR EACH node}\\
&\quad x^{1/2}= s_{\alpha\lambda_1}\left(\frac{1}{C}
\left(|N^\mathsf{c}|z_{c'}+
\sum_{c\in N}z_{c}^k\right)\right)\\
&\quad\text{FOR EACH color $c\in S$}\\
&\quad\quad z_{c}^{1/2}=2x^{1/2}-z_{c}-\alpha \nabla f(x^{1/2})\\
&\quad\quad\text{Through edge with color $c$,
send $z_{c}^{1/2}$ and receive $z_{c}^{\prime 1/2}$}\\
&\quad\quad s=z_{c}^{1/2}+z_{c}^{\prime 1/2}\\
&\quad\quad d=z_{c}^{1/2}-z_{c}^{\prime 1/2}\\
&\quad\quad x_{c}=(1/2)(s+u_{\alpha\lambda_2}\left(d\right))\\
&\quad\quad z_{c}=z_{c}+x_{c}-x^{1/2},\\
&\quad\text{IF $N^\mathsf{c}\ne \emptyset$}\\
&\quad\quad z_{c'}=x^{1/2}-\alpha \nabla f(x^{1/2})
\end{align*}

\vspace{0.1in}\noindent\textbf{SVM.}
We solve the
standard (primal) support vector machine setup \cite{cortes1995}
\begin{equation}
\mbox{minimize}
\quad
\frac{\lambda}{2}
\|x\|_2^2+
\frac{1}{n}
\sum^n_{i=1}g_i(x),
\label{eq:svm}
\end{equation}
where $x\in \reals^d$ is the optimization variable.
The problem data is embedded in
\[g_i(x)=\max\{1-y_ia_i^Tx,0\}
\]
where $\lambda>0$ is a regularization parameter and
$a_i\in \reals^d$, $b_i\in \reals$ and $y_i\in\{-1,+1\}$
are problem data for $i=1,\dots,n$.
Applying Lemma~\ref{lm:prox-1d} and working out the details, we get
a closed-form solution for the proximal operator:
 \[
 \prox_{\alpha g_i}(x_0)
 =
x_0+
 \Pi_{[0,\alpha]}\left(
 \frac{1-y_ia_i^Tx_0}
 {\|a_i\|_2^2}
 \right)
 y_i a_i
 \]
 for $i=1,\dots,n$.

\iffalse
\begin{align*}
&\prox_{\alpha g_i}(x_0)\\
&\!\!\!\!\!=\frac{1}{1+\alpha\lambda}
\left(x_0+
\Pi_{[0,\alpha]}\left(
y_i
\frac{(1+\alpha\lambda)(b_i+y_i)-a_i^Tx_0}
{\|a_i\|_2^2}
\right)
y_i a_i\right)
\end{align*}
where $\Pi_{[0,\alpha]}$ is the projection onto the interval $[0,\alpha]$.
\fi

We view $r=(\lambda/2)
\|x\|_2^2$ and $f=0$
in the notation of \eqref{eq:main-prob},
and apply \eqref{eq:main-method}:
\begin{align*}
x^{k+1/2}&=
\frac{1}{1+\alpha\lambda}\frac{1}{n}\sum^n_{i=1}z^k_i\\
\beta_i^k&= y_i\Pi_{[0,\alpha]}\left(\frac{1-y_ia_i^T
(2x^{k+1/2} - z_i^k)}{\|a_i\|_2^2}\right)\\
z^{k+1}_i&=x^{k+1/2}+\beta_ia_i,
\end{align*}
where the indices $i$ implicitly run through $i=1,\dots,n$.

%Since the functions $g_1,\dots,g_n$
%have variation only along one direction,
%and they are already proximable. (Cf.\ Section~XX.)
%However, the proximal operators furthermore have the following closed-form solution:
%\[
%\prox_{\alpha g_i}(x_0)=x_0+\alpha y_i a_i
%\]
%if $\alpha \|a_i\|_2^2+y_ia_i^Tx_0+y_ib\le 1$, and
%\[
%\prox_{\alpha g_i}(x_0)=
%x_0+
%(y_i-a_i^Tx_0-b)/\|a_i\|_2^2a_i
%\]
%otherwise.

%We view the $\ell_1$ and $\ell_2$ regularizers as the $r$ term,
%the non-smooth $g$ terms as the $g$ terms,
%and $f=0$
%in the context of \eqref{eq:main-prob},
%and apply \eqref{eq:main-method}:
%\begin{align*}
%x^{k+1/2}&=\frac{1}{n}\sum^n_{i=1}z_i^k\\
%x_i^{k+1}&=
%\prox_{\alpha g_i}\left(2x^{k+1/2}-z_i^k\right)\\
%z_i^{k+1}&=z_i^k+x_i^{k+1}-x^{k+1/2}
%\end{align*}
%where the indices $i$ implicity run from  $i=1,\dots, n$.

\vspace{0.1in}\noindent\textbf{Generalized linear model.}
In the setting of  generalized linear models,
the maximum likelihood estimator
is the solution of the optimization problem
\[
\mbox{minimize}\quad\frac{1}{n}\sum^n_{i=1}\big(A(x_i^T\beta)-T_i x_i^T\beta\big),
\]
where $\beta\in \reals^d$ is the optimization variable,
$x_i\in\reals^d$ and $T_i\in \reals$ for $i=1,\dots,n$ are problem data,
and $A$ is a convex function on $\reals$.

We view $r=0$, $f=0$,
and $g_i(\beta)=A(x_i^T\beta)-T_i x_i^T\beta$
in the notation of \eqref{eq:main-prob},
and apply \eqref{eq:main-method}:
\begin{align*}
\beta^{k+1}&=\frac{1}{n}\sum^n_{i=1}z^k_i\\
z^{k+1}_i
&=z^k_i
+\prox_{\alpha g_i}(2\beta^k-z_i^k)-\beta^k
\end{align*}
where the indices $i$ implicitly run from  $i=1,\dots, n$.

For an introduction on generalized linear models, see \cite{mccullagh1989}.

\vspace{0.1in}\noindent\textbf{Network Utility Maximization.}
In the problem of network utility maximization, one solves the optimization problem
\[
\begin{array}{ll}
\mbox{minimize}&
    (1/n)\sum^n_{i=1} f_i(x_i)\\
\mbox{subject to}
&x_i\in X_i\quad i=1,\dots,n\\
& A_ix_i\le y\quad i=1,\dots,n\\
&y\in Y,
\end{array}
\]
where
$f_1,\dots,f_n$ are functions,
$A_1,\dots,A_n$ are matrices,
$X_1,\dots,X_n,Y$ are sets,
and $x_1,\dots,x_n,y$ are the optimization variables
(For convenience, we convert the maximization problem into a minimization problem.)
For a comprehensive discussion on network utility maximization, see \cite{palomar2007}.

This optimization problem is equivalent to the master problem
\[
\mbox{minimize}\quad
    \frac{1}{n}\sum^n_{i=1} g_i(y)+I_Y(y)
\]
where we define the subproblems
\[
g_i(y)=\inf
\left\{
f_i(x_i)\,|\,x_i\in X_i,\, A_ix_i\le y\right\},
\]
for $i=1,\dots,n$. The master problem only involves the variable $y$, and $x_i$ is the variable in the $i$th subproblem.
For each $i=1,\dots,n$,
the function $g_i$ may not be differentiable, but it is convex if $f_i$ and $X_i$ are convex.
If $Y$ is convex, then the equivalent problem is convex.

We view $f=0$ and $r=I_Y$
in the notation of \eqref{eq:main-prob},
and apply \eqref{eq:main-method}:
\begin{align*}
x^{k+1/2}&=\Pi_Y\left(\frac{1}{n}\sum^n_{i=1}z_i^k\right)\\
y_i^{k+1}&
=\argmin_{\substack{x_i\in X_i\\ A_ix_i\le y\\y\in Y}}
\left\{
\alpha f_i(x_i)+\frac{1}{2}\|y-(2x^{k+1/2}-z_i^k)\|_2^2\,
\right\}\\
z_i^{k+1}&=z_i^k+y_i^{k+1}-y^{k+1/2}.
\end{align*}
Network utility maximization is often performed on a
distributed computing network,
and if so the optimization problem for evaluating
the proximal operators can be solved in a distributed, parallel fashion.

\iffalse
\vspace{0.1in}\noindent\textbf{Total variation minimization.}
The general setup of total variation minimization
solves the optimization problem
\[
\begin{array}{ll}
\mbox{minimize}&
\|\mathcal{A}X-B\|_2^2+
\lambda \|D X\|_1\\
\mbox{subject to}
&X_{ij}\in C \quad \text{for }i,j=1,\dots,n,
\end{array}
\]
where $X\in \reals^{d\times d}$ is the optimization variable,
$\mathcal{A}:\reals^{d\times d}\rightarrow \reals^{d\times d}$ is a linear operator,
$B\in \reals^{d\times d}$ is problem data,
$C\subset\reals$ is a constraint set,
and $\lambda>0$ is a regularization parameter.

$D$ is the finite difference matrix
and $\|D X\|_1$ can be decomposed into proximal terms.
We illustrate this idea with the 1D finite difference matrix,
although this discussion extends to the 2D or higher dimensional case.
We decompose the regularizer (the second term)
into
$\|Dx\|_1=\|D_1x\|_1+\|D_2x\|_2$
where
\begin{gather*}
D_1=
\begin{bmatrix}
1&-1&0&0&0&0&\dots &0\\
0&0&1&-1&0&0&\dots &0\\
0&0&0&0&1&-1&\dots &0
\end{bmatrix}\\
D_2=
\begin{bmatrix}
0&1&-1&0&0&0&\dots &0\\
0&0&0&1&-1&0&\dots &0\\
0&0&0&0&0&1&\dots &0
\end{bmatrix}
\end{gather*}

%\[
%D=
%\begin{bmatrix}
%1&-1&0&0&\dots &0\\
%0&1&-1&0&\dots &0\\
%0&0&1&-1&\dots &0
%\end{bmatrix}
%\]

This is much like red-black Gauss-Siedel
for solving Laplace's equation.
The same can be done in 2D and 3D.
We show this in the appendix.

CitationXXX.

L.M. Adams, H.F. Jordan, Is SOR color-blind? SIAM J. Sci. Statist. Comput. 7 (2) (1986) 490–506.
G. Fox, M. Johnson, G. Lyzenga, S. Otto, J. Salmon, D. Walker, Solving Problems on Concurrent Processors, Prentice Hall, 1988
\fi

\begin{figure}
\begin{center}
\includegraphics[width=.9\textwidth]{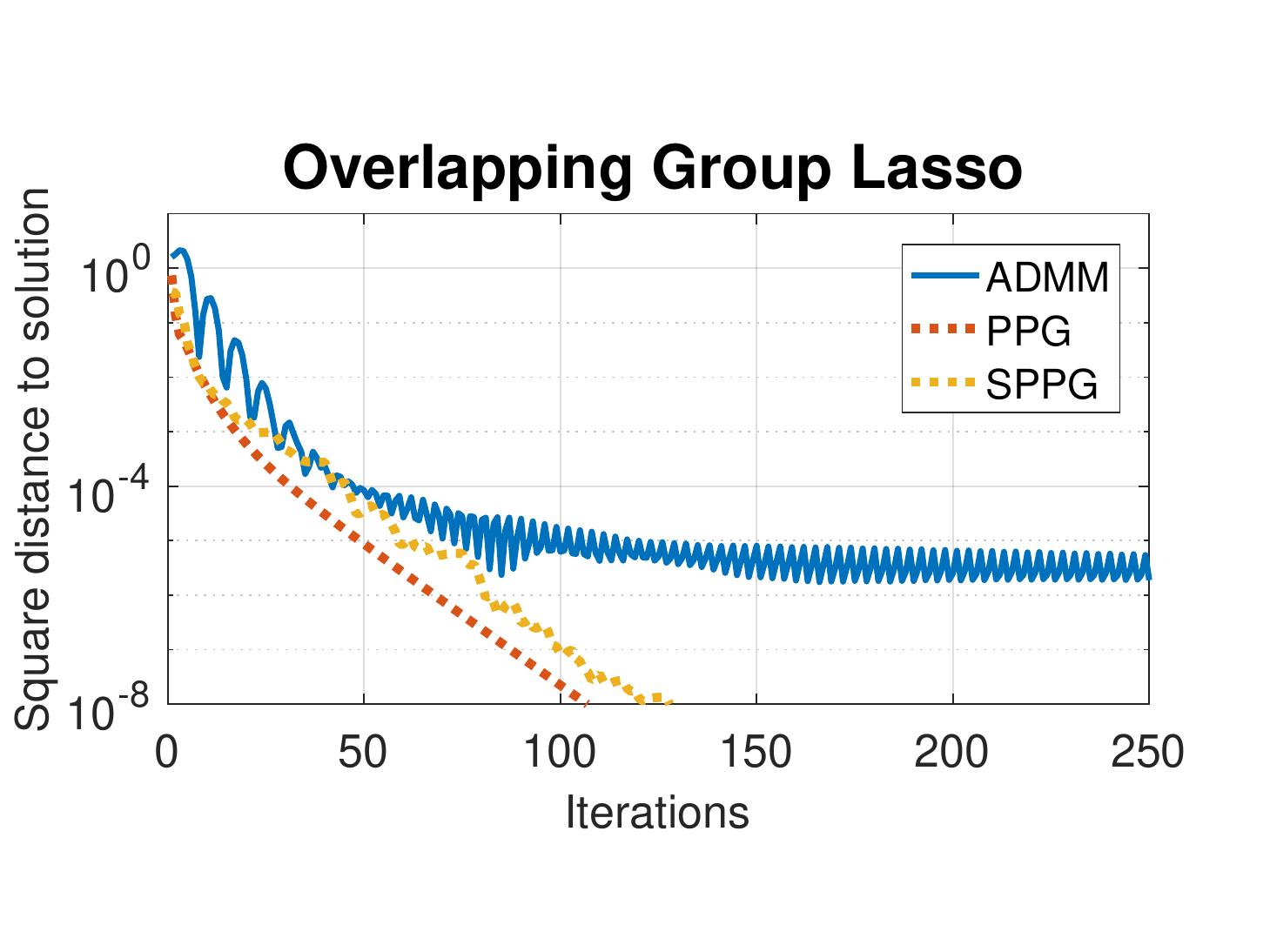}
\caption{
The error $\|x^{k+1/2}-x^\star\|^2$ vs.\ iteration
for overlapping group lasso example.
}
\label{fig:g_lasso}
\end{center}
\end{figure}

\section{Experiments}
\label{sc:experiments}
In this section, we present numerical experiments on two applications
discussed in Section~\ref{sc:applications}.
The first experiment is small and is meant to serve as a proof of concept.
The second experiment is more serious; the problem size is large and we compare the performance
with existing methods.
For the sake of scientific reproducibility, we provide the code used to generate these experiments.

For both experiments, we observe linear convergence.
This is a pleasant surprise, as the theory presented in Section~\ref{sc:conv}
and proved in Section~\ref{s:theory}
only guarantee a $\mathcal{O}(1/k)$ rate.

%We limit the discussion to the
%performance of the optimization algorithm,
%and omit all discussion of the statistical
%validity of the models and their solutions.

%We compare \eqref{eq:main-method}, \eqref{eq:main-sppg},
%and other existing methods.
%The x-axes are in units of epochs,
%i.e., the number of times we go through the entire data set.

\vspace{0.1in}\noindent\textbf{Overlapping group lasso.}
The problem size of this setup is $m=300$, $d=42$, $n=3$.
The groups are as described in Section~\ref{sc:applications}.

The dominant cost per iteration of
\eqref{eq:main-method}
is evaluating $\prox_{\alpha r}$
which takes $\mathcal{O}(d^2)$ time
with the precomputed factorization.
Since the cost per iteration of
\eqref{eq:main-sppg}
is no cheaper than
that of \eqref{eq:main-method},
there is no reason to use \eqref{eq:main-sppg}.

%The true solution, which we use to plot the error is obtained by running an excessive number of iterations on PPG.

We compare the performance of
\eqref{eq:main-method}
to consensus ADMM (cf.\ \S7.1 of \cite{boyd2011}).
Both methods use the same computational subroutines
and therefore have essentially the
same computational cost per iteration.
We show the results in Figure~\ref{fig:g_lasso}.

\vspace{0.1in}\noindent\textbf{SVM.}
The problem size of this setup is $n= 2^{17}=131,072$ and
$d = 512$.
The synthetic dataset $A$ and $y$ are randomly genearated
and the regularization parameter $\lambda=0.1$ is used.
So the problem data $A$ consists of $64\times 2^{20}$ numbers
and requires
$500$MB storage to store in double-precision floating-point format.

First, we compare the performance of
\eqref{eq:main-method} and \eqref{eq:main-sppg}
to the stochastic proximal iteration with diminishing step size $\alpha_k=C/k$
in Figure~\ref{fig:svm1}.
For all three methods, the parameters were roughly tuned for optimal performance.

\begin{figure}
\begin{center}
\includegraphics[width=.9\textwidth]{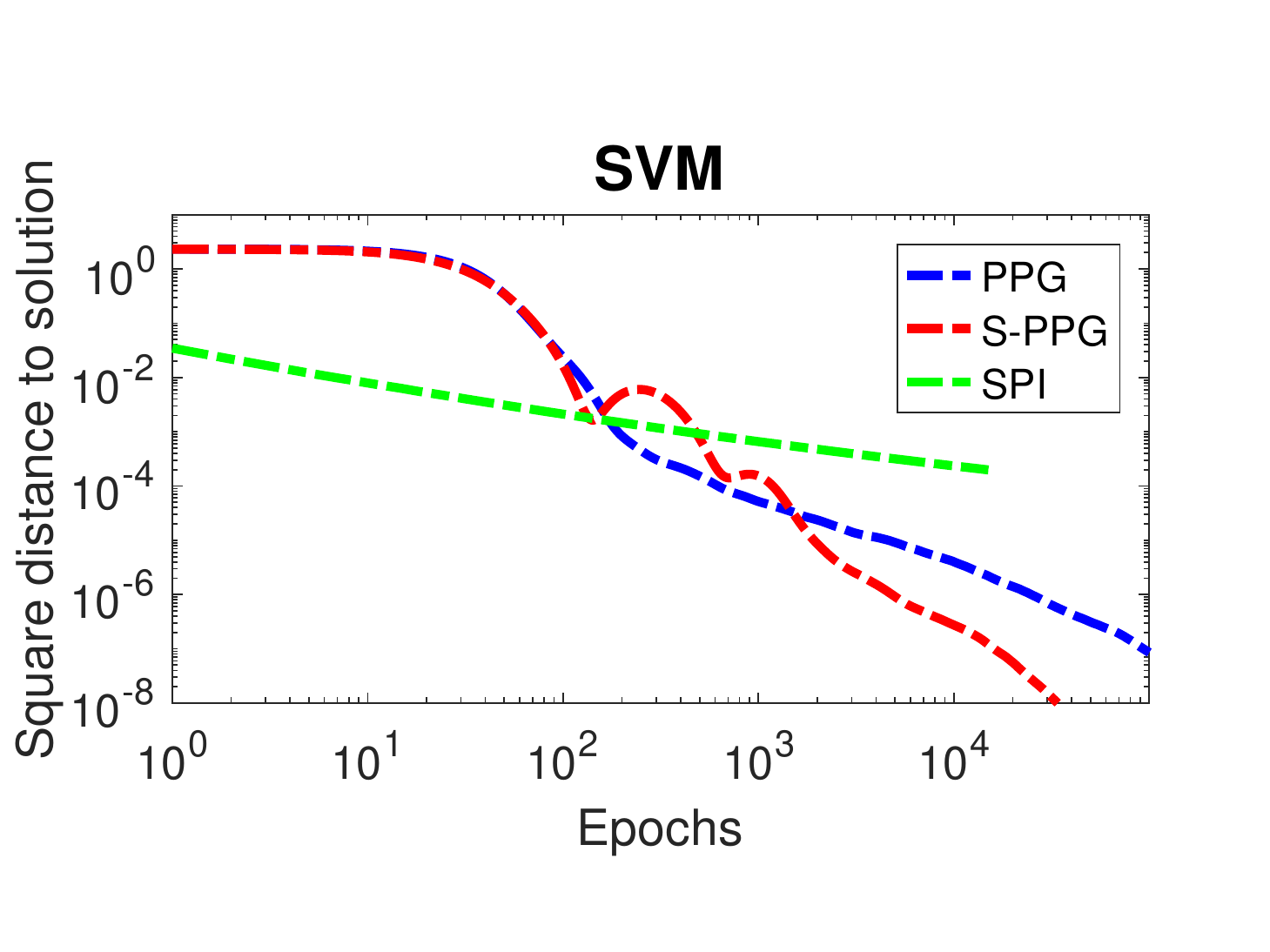}
\caption{
The error $\|x^{k+1/2}-x^\star\|^2$ vs.\ iteration
for the SVM example.
}
\label{fig:svm1}
\end{center}
\end{figure}

Next, we compare the \eqref{eq:main-method} and \eqref{eq:main-sppg}
to a state-of-the-art SVM solver LIBLINEAR \cite{fan2008}.
Since LIBLINEAR is based on a second order method
while \eqref{eq:main-method} and \eqref{eq:main-sppg} are first-order methods,
comparing the number of iterations is not very meaningful.
Rather we compare the wall-clock time these methods take to reach an equivalent level of accuracy.
To compare the quality of the solutions, we use the objective value 
of the problem \eqref{eq:svm}.

The CPU code for \eqref{eq:main-method} and \eqref{eq:main-sppg} are written in C++.
The code is serial, not heaviliy optimized, and does not utilize BLAS (Basic Linear Algebra Subprograms) libraries or any SIMD instructions.
On the other hand, LIBLINEAR is heavily optimized and does utilize BLAS.

We also implemented \eqref{eq:main-method} on CUDA and ran it on a GPU.
The algorithmic structure of \eqref{eq:main-method} is particularly well-suited for CUDA,
especially when the problem size is large.
Roughly speaking, the $x^{k+1/2}$ update requires a reduce operation,
which can be done effectively on CUDA.
The $z_i^k$ updates are embarassingly parallel,
and can be done very effectively on CUDA.
The threads must globally synchronize twice per iteration:
once before computing the average of $z_i^k$ for $i=1,\dots,n$
and once after $x^{k+1/2}$ has been computed.
Generally speaking, global synchronization on a GPU is expensive, 
but we have empirically verified that 
the computational bottleneck is in the other operations, not the synchronization,
when the problem size is reasonably large.

Table~\ref{table:svm} shows the results.
We see that CPU implementation of \eqref{eq:main-method} and \eqref{eq:main-sppg} 
are competitive with LIBLINEAR,
and could even be faster than LIBLINEAR if the code is further optimized.
On the other hand,
the CUDA implementation of \eqref{eq:main-method}
clearly outperforms LIBLINEAR.
\eqref{eq:main-method} and \eqref{eq:main-sppg} were run until the objective values were good as that of LIBLINEAR.
These expeirments were run on an Intel Core i7-990 GPU and a GeForce GTX TITAN X GPU.

\begin{table}
\begin{center}
    \begin{tabular}{  | c  | c | c | c|  }
    \hline
    Method  &Run time &  Objective value  \\ \hline
       LIBLINEAR &  $8.47s$ & $3.7699$   \\ \hline
       CPU \eqref{eq:main-method} &  $33.9s$ ($30$ iterations) & $3.7364$ \\ \hline
       CPU \eqref{eq:main-sppg}  & $37.2s$ ($30$ epochs) & $3.7364$\\ \hline
       CUDA \eqref{eq:main-method} & $0.68s$ ($30$ iterations) & $3.7364$ \\ \hline
    \end{tabular}
\end{center}
    \caption{Run time as a function of grid size}
    \label{table:svm}
\end{table}

\section{Convergence proofs}
\label{s:theory}

We say a function $f$ is closed convex and proper
if its epigraph
\[
\big\{(x,\alpha)\,\mid\,x\in \reals^d,\,|f(x)|<\infty,\,f(x)\le \alpha\big\}
\]
is a closed subset of $\mathbb{R}^{d+1}$,
$f$ is convex, and $f(x)=-\infty$ nowhere and $f(x)<\infty$ for some $x$.

A closed convex and proper function $f$ has $L$-Lipschitz continuous gradient if
$f$ is differentiable everywhere and
\[
\|\nabla f(x)-\nabla f(y)\|_2\le L\|x-y\|_2
\]
for all $x,y\in \reals^d$.
This holds if and only if a
closed convex and proper function $f$ satisfies
\[
\|\nabla f(x)-\nabla f(y)\|_2^2\le L(\nabla f(x)-\nabla f(y))^T(x-y)
\]
for all $x,y\in \reals^d$, which is known as the Baillon-Haddad Theorem
\cite{baillon1977}.
See \cite{bauschke2010,bauschke2011, ryu2016} for a discussion on this.

Proximal operators are firmly non-expansive. This means
for any closed convex and proper function $f$ and $x,y\in\reals^d$,
\[
\|\prox_f(x)-\prox_f(y)\|_2^2\le
(\prox_f(x)-\prox_f(y))^T(x-y).
\]
By applying Cauchy-Schwartz inequality, we can see that firmly non-expansive
operators are non-expansive.

\vspace{0.1in}\noindent\textbf{Some proximable functions.}
As discussed,
many standard references like \cite{combettes2011, parikh2014}
provide a list proximable functions.
Here we discuss the few we use.

An optimization problem of the form
\[
\begin{array}{ll}
\mbox{minimize}&
f(x)\\
\mbox{subject to}& x\in C,
\end{array}
\]
where $x$ is the optimization variable and $C$ is a constraint set,
can be transformed into the equivalent optimization problem
\[
\begin{array}{ll}
\mbox{minimize}&
f(x)+I_C(x).
\end{array}
\]
The \emph{indicator function} $I_C$ is defined as
\[
I_C(x)=\left\{
\begin{array}{ll}
0&\text{ for }x\in C\\
\infty&\text{ otherwise,}
\end{array}
\right.
\]
and
the proximal operator with respect to $I_C$ is
\[
\prox_{\alpha I_C}(x)=\Pi_C(x)
\]
where $\Pi_C$ is the projection onto $C$ for any $\alpha>0$.
So $I_C$ is proximable if the projection onto $C$
is easy to evaluate.

The following results are well known. The proximal operator with respect to $r(x)=|x|$
is called the scalar soft-thresholding operator
\[
\prox_{\lambda r}(x)=
s_\lambda(x) =
\left\{
\begin{array}{ll}
x+\lambda& x<-\lambda\\
0&-\lambda\le x\le \lambda\\
x-\lambda &x> \lambda.
\end{array}
\right.
\]

The proximal operator with respect to $r(x)=\|x\|_2$
is called the vector soft-thresholding operator
\[
\prox_{\lambda r}(x)
=u_\lambda(x) =
\left\{
\begin{array}{ll}
\max\{1-\lambda/\|x\|_2,0\}x&\text{for }x\ne 0\\
0&\text{otherwise}.
\end{array}
\right.
\]

The proximal operator with respect to
$r(M)=\|M\|_*$ is called the matrix
soft-thresholding operator
\[
\prox_{\lambda r}(M)
=
t_\lambda (M)
=\{Us_\lambda (\Sigma) V^T\,|\,U\Sigma V^T=M\text{ is the SVD}
\},
\]
where $s_\lambda (\Sigma)$ is applied element-wise to the diagonals.

\begin{lemma}
\label{lm:prox-1d}
Assume $g(x)=f(a^Tx)$ where $a\in \reals^d$ and $f$ is a closed, convex, and proper function on $\reals$.
Then the $\prox_g$ can be evaluated by solving a one-dimensional optimization problem.
\end{lemma}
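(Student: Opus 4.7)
The plan is to reduce the $d$-dimensional minimization defining $\prox_g(x_0)$ to a scalar minimization by exploiting the fact that $g$ depends on $x$ only through the inner product $a^T x$. The case $a = 0$ is trivial since then $g$ is constant and $\prox_g(x_0) = x_0$, so assume $a \ne 0$.

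The first step is an orthogonal decomposition argument. Write any $x \in \reals^d$ as $x = x_0 + s a + w$ where $s \in \reals$ and $w \in \{a\}^\perp$. Then $a^T x = a^T x_0 + s \|a\|_2^2$ depends only on $s$, and by the Pythagorean identity $\|x - x_0\|_2^2 = s^2 \|a\|_2^2 + \|w\|_2^2$. Hence the objective $f(a^T x) + \tfrac{1}{2}\|x - x_0\|_2^2$ is strictly minimized in $w$ at $w = 0$, which shows that the minimizer $x^\star$ of the prox problem must lie on the line $\{x_0 + s a : s \in \reals\}$.

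The second step is to parametrize this line and recognize the remaining problem as one-dimensional. Setting $y = a^T x = a^T x_0 + s \|a\|_2^2$, the original prox problem becomes
\[
\min_{y \in \reals} \left\{ f(y) + \frac{1}{2 \|a\|_2^2}(y - a^T x_0)^2 \right\},
\]
which is a scalar minimization (in fact the defining problem for $\prox_{\|a\|_2^2 f}(a^T x_0)$ on $\reals$); by closedness, convexity, and properness of $f$ this minimizer $y^\star$ exists and is unique. Recovering the original variable gives the closed form $\prox_g(x_0) = x_0 + \frac{y^\star - a^T x_0}{\|a\|_2^2} a$.

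There is no real obstacle: the only nontrivial point is the orthogonal decomposition showing the minimizer lies on $x_0 + \reals a$, and everything else is a change of variables. If desired, one can also derive this from the optimality condition $0 \in a\,\partial f(a^T x^\star) + (x^\star - x_0)$, which immediately forces $x^\star - x_0 \in \reals a$. I would prefer the decomposition argument because it avoids any subdifferential calculus and directly exhibits the scalar problem to be solved.
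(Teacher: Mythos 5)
Your proof is correct and follows essentially the same route as the paper: restrict the minimizer to the line $x_0+\reals a$ and reduce to a scalar problem. The only difference is that the paper simply asserts ``the solution must be of the form $x_0+\beta a$'' while you justify it explicitly via the orthogonal decomposition (and note the alternative via the optimality condition), and you additionally rescale the scalar variable to exhibit the answer as $\prox_{\|a\|_2^2 f}(a^Tx_0)$ --- both harmless elaborations of the same argument.
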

\begin{proof}
By examining the optimization problem that defines $\prox_g$
\[
\prox_g(x_0)=
\argmin_x\left\{
f(a^Tx)+\frac{1}{2}\|x-x_0\|_2^2
\right\}
\]
we see that solution must be of the form
$x_0+\beta a$. So
\[
\prox_g(x_0)=x_0+\beta a,
\qquad
\beta=
\argmin_\beta\left\{f(a^Tx_0+\beta \|a\|_2^2)+\frac{\|a\|_2^2}{2}\beta^2\right\}.
\]
\end{proof}
\begin{lemma}
\label{lm:prox-lemma}
Let
\[
g(x_1,x_2,\dots,x_n)=f(a_1x_1+a_2x_2+\dots+a_nx_n)
\]
where $x_1,\dots,x_n\in \reals^d$, $a\in \reals^n$, $a\ne 0$,
and $f:\reals\rightarrow \reals\cup\{\infty\}$ is closed, convex, and proper.
Then we can compute $\prox_g$ with
\begin{gather*}
w=\prox_{\|a\|_2^2f}(a_1\xi_1+a_2\xi_2+\dots +a_n\xi_n)\\
v = \frac{1}{\|a\|_2^2}(a_1\xi_1+a_2\xi_2+\dots+a_n\xi_n-w)\\
\prox_g(\xi_1,\xi_2,\dots,\xi_n)=
\begin{pmatrix}
\xi_1-a_1v\\
\xi_2-a_2v\\
\vdots\\
\xi_n-a_nv
\end{pmatrix}.
\end{gather*}
\end{lemma}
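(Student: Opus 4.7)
The plan is to reduce the multivariable proximal operator to a scalar-weighted proximal operator of $f$ by introducing the auxiliary variable $y = a_1 x_1 + \cdots + a_n x_n$ and profiling out $x_1, \ldots, x_n$ first, with $y$ held fixed. By definition,
\begin{equation*}
\prox_g(\xi_1, \ldots, \xi_n) = \argmin_{x_1, \ldots, x_n} \left\{ f(a_1 x_1 + \cdots + a_n x_n) + \tfrac{1}{2}\sum_{i=1}^n \|x_i - \xi_i\|_2^2 \right\}.
\end{equation*}
Splitting this joint minimization into an outer min over $y\in\reals^d$ and an inner min over $(x_1,\ldots,x_n)$ subject to $\sum_i a_i x_i = y$ isolates $f$ from the quadratic coupling and leaves a simple equality-constrained quadratic program inside.

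First I would solve the inner problem $\min_{x_1,\ldots,x_n} \tfrac{1}{2}\sum_i \|x_i-\xi_i\|_2^2$ subject to $\sum_i a_i x_i = y$. The KKT conditions give $x_i - \xi_i = -a_i v$ for a common multiplier $v\in\reals^d$, and substituting into the constraint yields $v = (\sum_i a_i \xi_i - y)/\|a\|_2^2$, which uses $a\ne 0$. Plugging back, the optimal inner value equals $\tfrac{1}{2\|a\|_2^2}\|y - \sum_i a_i \xi_i\|_2^2$. Second, I would solve the remaining outer problem
\begin{equation*}
w := \argmin_y \left\{ f(y) + \tfrac{1}{2\|a\|_2^2}\bigl\|y - \textstyle\sum_i a_i \xi_i\bigr\|_2^2 \right\}.
\end{equation*}
Multiplying the objective by the positive constant $\|a\|_2^2$ does not change the argmin, so $w = \prox_{\|a\|_2^2 f}(\sum_i a_i \xi_i)$, which matches the first formula in the statement. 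Finally I would back-substitute to obtain $v = (\sum_i a_i \xi_i - w)/\|a\|_2^2$ and $x_i = \xi_i - a_i v$, recovering the remaining formulas.

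There is no deep difficulty here; the argument is essentially the same trick used in Lemma~\ref{lm:prox-1d}, generalized from one scalar coefficient vector to $n$ of them. The one point worth checking is the legitimacy of splitting the joint minimization into an outer/inner min: this is valid because the objective is the sum of a closed convex function of $y$ and a strictly convex quadratic in $(x_1,\ldots,x_n)$ whose inner minimum is attained and is a continuous function of $y$, so standard partial-minimization of convex functions applies. The remaining work is careful bookkeeping of the $\|a\|_2^2$ scaling, which is where the factor inside $\prox_{\|a\|_2^2 f}$ and the $1/\|a\|_2^2$ in the formula for $v$ come from.
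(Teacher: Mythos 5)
Your proof is correct, and it reaches the stated formulas by a slightly different route than the paper. The paper's proof works directly with the first-order optimality conditions of the minimization defining $\prox_g$: it writes $0\in a_i^2 v+a_i(x_i-\xi_i)$ for a common subgradient $v\in\partial f(a_1x_1+\cdots+a_nx_n)$, sums over $i$ to obtain $0\in\|a\|_2^2 v+w-\sum_i a_i\xi_i$ with $w=\sum_i a_ix_i$, and recognizes this as the optimality condition for $w=\prox_{\|a\|_2^2f}(\sum_i a_i\xi_i)$. You instead introduce $y=\sum_i a_ix_i$, partially minimize the quadratic over $(x_1,\ldots,x_n)$ subject to $\sum_i a_ix_i=y$ to get the marginal $\tfrac{1}{2\|a\|_2^2}\|y-\sum_i a_i\xi_i\|_2^2$, and then identify the outer problem as the same prox. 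The underlying algebra ($x_i=\xi_i-a_iv$, the $\|a\|_2^2$ rescaling) is identical, but your variational framing has a small advantage: it produces the candidate point as the genuine global argmin of a convex marginal problem, whereas the paper's derivation writes necessary conditions and implicitly relies on convexity for sufficiency. Your worry about the legitimacy of the inner/outer split is more caution than needed — the identity $\inf_x h = \inf_y \inf_{\{x:\sum_i a_ix_i=y\}} h$ is just a repartition of the feasible set and holds without any convexity; convexity is only needed for attainment and for recognizing the marginal as a prox. One cosmetic note: the lemma states $f:\reals\to\reals\cup\{\infty\}$ while $x_i\in\reals^d$, so your $y\in\reals^d$ is the right reading of the intended (vector-valued) statement.
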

\begin{proof}
The optimality conditions of
$\prox_g(\xi_1,\xi_2,\dots,\xi_n)$
gives us
\begin{align*}
0&\in a_1^2v+a_1(x_1-\xi_1)\\\
&\quad\vdots\qquad\qquad\qquad\qquad\vdots\\
0&\in a_n^2v+a_n(x_n-\xi_n)
\end{align*}
for some $v\in \partial f(a_1x_1+\dots +a_nx_n)$.
Summing this we get
\begin{align*}
0&\in \|a\|_2^2v+
(a_1x_1+\dots +a_nx_n)
-
(a_1\xi_1+\dots +a_n\xi_n)
\end{align*}
and with $w=a_1x_1+\dots +a_nx_n$ we have
\[
w=\prox_{\|a\|_2^2f}(a_1\xi_1+a_2\xi_2+\dots +a_n\xi_n).
\]
The expression for $v$ and $\prox_g(\xi_1,\xi_2,\dots,\xi_n)$ follows from
reorganizing the equations.

\end{proof}
So if $g(x,y)=f(x+y)$ then
\[
\prox_g(x_0,y_0)=
\frac{1}{2}
\begin{pmatrix}
x_0-y_0+\prox_{2f}(x_0+y_0)\\
y_0-x_0+\prox_{2f}(x_0+y_0)
\end{pmatrix}.
\]
If $g(x,y)=f(x-y)$ then
\[
\prox_g(x_0,y_0)=
\frac{1}{2}
\begin{pmatrix}
x_0+y_0+
\prox_{2f}(x_0-y_0)\\
x_0+y_0-\prox_{2f}(x_0-y_0)
\end{pmatrix}.
\]

%Lemma
%Define
%\[
%g(x,y,z)=f(x+y-2z)
%\]
%Then
%\begin{gather*}
%\prox_g(x_0,y_0,z_0)\\=
%\frac{1}{6}
%\begin{pmatrix}
%5x_0-y_0+2z_0+\prox_{6f}(x_0+y_0-2z_0)\\
%-x_0+5y_0+2z_0+\prox_{6f}(x_0+y_0-2z_0)\\
%2x_0+2y_0+2z_0-2\prox_{6f}(x_0+y_0-2z_0)
%\end{pmatrix}
%\end{gather*}

\subsection{Deterministic analysis}
Let $h$ be a closed, convex, and proper function on $\reals^d$.
When
\[
x=\prox_{\alpha h}(x_0),
\]
we have
\[
\alpha u+x=x_0
\]
with $u\in \partial h(x_0)$.
To simplify the notation, we write
\[
\alpha \tnabla h(x_0)+x=x_0
\]
where $\tnabla h(x_0)\in \partial h(x_0)$.
So is $\tnabla h(x_0)$ a subgradient of $h$ at $x_0$,
and which subgradient $\tnabla h(x_0)$ is referring to
depends on the context.
(This notation is convenient yet potentially sloppy, but we promise to not commit
any fallacy of equivocation.)

\begin{proof}[Proof of Lemma~\ref{lm:encoding}]
Assume $\vz^\star$ is a fixed point of \eqref{eq:main-method} or \eqref{eq:main-sppg}.
Then $p(\vz^\star)=0$.
Then we have
$x^\star=x^{\prime\star}_i$ for $i=1,\dots,n$
where $x^{\star}$ and $x^{\prime\star}_1,\dots,x^{\prime\star}_n$ are as defined in \eqref{eq:p-mapping}.
So
\begin{align*}
0&=\alpha  \tnabla r(x^{\star})+x^{\star}-\bar{z}\\
0&=\alpha  \tnabla g_1(x^{\star})-x^{\star}+z_1^{\star}+\alpha \nabla f_1(x^{\star})\\
&\qquad\qquad\vdots\qquad\qquad\qquad \vdots\\
0&=\alpha  \tnabla g_n(x^{\star})-x^{\star}+z_n^{\star}+\alpha \nabla f_n(x^{\star}).
\end{align*}
Adding these up and dividing by $n$ appropriately gives us
\[
0=\tnabla r(x^{\star})+\tnabla \bar{g}(x^{\star})+\nabla \bar{f}(x^{\star}),
\]
so $x^{\star}$ is a solution of Problem~\eqref{eq:main-prob}.
Reorganize the definition of $x$ in \eqref{eq:p-mapping}
to get
\[
x^{\star}=\argmin_x
\left\{
r(x)-\frac{1}{\alpha}(\bar{z}^{\star}-x^{\star})^Tx+\frac{1}{2\alpha}\|x-x^{\star}\|_2^2
\right\}.
\]
So $x^{\star}$ minimizes $L(\cdot,\vx^{\star},(1/\alpha)(\vz^{\star}-\vx^{\star}))$,
where $L$ is defined in \eqref{eq:lagrangian}.
Reorganize the definition of $x'$ in \eqref{eq:p-mapping}
to get
\[
x^{\star}=\argmin_x\left\{
f(x^{\star})+(\nabla f(x^{\star}))^T(x-x^{\star})
+g(x)+
\frac{1}{\alpha}(z_i^{\star}-x^{\star})^Tx+
\frac{1}{2\alpha}\|x-x^{\star}\|_2^2
\right\}.
\]
So $x^{\star}$ minimizes $L(x^{\star},\cdot,(1/\alpha)(\vz^{\star}-\vx^{\star}))$.
So
$\boldsymbol{\nu}^{\star}=(1/\alpha)(\vz^{\star}-\vx^{\star})$
is a dual solution of Problem~\eqref{eq:main-prob}.

The argument works in the other direction as well.
If we assume $(x^\star,\boldsymbol{\nu}^{\star})$
is a primal dual solution of Problem~\eqref{eq:main-prob},
we can show that $\vz^{\star}=\vx^{\star}+\alpha \boldsymbol{\nu}^{\star}$
is a fixed point of \eqref{eq:main-method}
by following a similar line of logic.
\end{proof}

Before we proceed to the main proofs, we introduce more notation.
Define the function
\[
\brr(\vx) = \frac{1}{n}\sum_{i=1}^n r(x_i) + I_{C}(\vx),
\]
where
\[
C=\{(x_1,\dots,x_n)\,|\,x_1=\dots=x_n\}.
\]
So
\[
I_{C}(\vx)=
\left\{
\begin{array}{ll}
0 & \text{for }x_1=\cdots=x_n\\
\infty&\text{otherwise.}
\end{array}
\right.
\]
As before, we write
\[
\brf(\vx)=
\frac{1}{n}\sum_{i=1}^n f_i(x_i),
\qquad \brg(\vx) = \frac{1}{n}\sum_{i=1}^n g_i(x_i).
\]
With this new notation, we can recast
Problem~\eqref{eq:main-prob} into
\[
\begin{array}{ll}
\mbox{minimize}&
\brr(\vx)+\brf(\vx) +\brg(\vx),
\end{array}
\]
where $\vx\in \reals^{dn}$ is the optimization variable.
We can also rewrite the definition of $p$ as the following three-step process, which starts from $\vz$, produces intermediate points $\vx,\vx'$, and yields $p(\vz)$:
\begin{subequations}\label{eq:pmap2}
%\noeqref{eq:pmap2x,eq:pmap2xp}
\begin{align}
\vx & = \prox_{\alpha \brr}(\vz)\label{eq:pmap2x}\\
\vx' & = \prox_{\alpha \brg}\left(2\vx-\vz-\alpha \nabla \brf(\vx)\right)\label{eq:pmap2xp}\\
p(\vz)& = (1/\alpha)(\vx-\vx').
\label{eq:p-mapping2}
\end{align}
\end{subequations}
Note that $x_1,\dots,x_n$ in $\vx$ out of \eqref{eq:pmap2x} are identical due to $I_C$.
This is the same $p$ as the $p$ defined in \eqref{eq:p-mapping};
we're just using the new notation.

We treat the boldface variables as vectors in $\reals^{dn}$.
So the inner product between boldface variables is
\[
\vx^T\vz = \sum^n_{i=1}x_i^Tz_i
\]
and the gradient of $\bar{f}(\vx)$ is
\[
\nabla \bar{f}(\vx)=
\frac{1}{n}
\begin{bmatrix}
\nabla f_1(x_1)\\
\nabla f_2(x_2)\\
\vdots\\
\nabla f_n(x_n)
\end{bmatrix}.
\]
We use $\tnabla g(\vx)$ and $\tnabla r(\vx)$ in the same manner as before.

\begin{lemma}
\label{lem:key}
Let $\vz$ and $\tilde{\vz}$ be any points in $\reals^{dn}$. Then
\[
\alpha\|p(\vz)-p(\tilde{\vz})\|^2 \le  ( p(\vz)-p(\tilde{\vz}))^T(\vz-\tilde{\vz})
-
( \nabla \bar{f}(\vx)-\bar{f}(\tilde{\vx}))^T( \vx'-\tilde{\vx}')
\]
where $\tilde{\vx}$ and $\tilde{\vx}'$ are obtained by applying \eqref{eq:pmap2x} and then \eqref{eq:pmap2xp}, respectively, to $\tilde{\vz}$ instead of $\vz$.
\end{lemma}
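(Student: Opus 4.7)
The plan is to apply firm non-expansiveness of the proximal operators of $\brr$ and $\brg$ to derive two inequalities and then combine them by elementary algebra. For notational convenience, I will abbreviate $\Delta \vx = \vx - \tilde{\vx}$, $\Delta \vx' = \vx' - \tilde{\vx}'$, $\Delta \vz = \vz - \tilde{\vz}$, $\Delta p = p(\vz) - p(\tilde{\vz})$, and $\Delta \vf = \nabla \brf(\vx) - \nabla \brf(\tilde{\vx})$. From \eqref{eq:p-mapping2} we immediately have the key identity $\alpha \Delta p = \Delta \vx - \Delta \vx'$.

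First I would apply firm non-expansiveness to the step \eqref{eq:pmap2x}: since $\vx = \prox_{\alpha \brr}(\vz)$ and $\tilde{\vx} = \prox_{\alpha \brr}(\tilde{\vz})$, we get $\|\Delta \vx\|^2 \le \Delta \vx^T \Delta \vz$. Next I would apply firm non-expansiveness to \eqref{eq:pmap2xp}: the arguments of $\prox_{\alpha \brg}$ at $\vz$ and $\tilde{\vz}$ are $2\vx - \vz - \alpha \nabla \brf(\vx)$ and $2\tilde{\vx} - \tilde{\vz} - \alpha \nabla \brf(\tilde{\vx})$, whose difference is $2\Delta \vx - \Delta \vz - \alpha \Delta \vf$. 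This yields $\|\Delta \vx'\|^2 \le \Delta \vx'^T(2\Delta \vx - \Delta \vz - \alpha \Delta \vf)$.

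Adding the two inequalities and regrouping, the quadratic terms on the left combine with the cross term $2 \Delta \vx^T \Delta \vx'$ to produce $\|\Delta \vx - \Delta \vx'\|^2$, while the two $\Delta \vz$ terms collect into $(\Delta \vx - \Delta \vx')^T \Delta \vz$, leaving the trailing term $-\alpha \Delta \vx'^T \Delta \vf$. Substituting $\Delta \vx - \Delta \vx' = \alpha \Delta p$ and dividing by $\alpha$ produces the claimed bound exactly.

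I do not expect a serious obstacle: the argument is essentially the firm non-expansiveness manipulation used in the analysis of Davis-Yin splitting, which the paper identifies as the $n=1$ case of PPG, and the gradient term $-\Delta \vf^T \Delta \vx'$ simply passes through unaltered, to be handled later in combination with the Baillon-Haddad cocoercivity bound on $\nabla \brf$. The only subtlety is bookkeeping: the factor of $2$ in front of $\vx$ in the $\vx'$-update is precisely what makes the cross terms cancel into the square $\|\Delta \vx - \Delta \vx'\|^2 = \alpha^2 \|\Delta p\|^2$, so the three-operator structure of $p$ is essential for the proof to close.
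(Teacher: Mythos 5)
Your proposal is correct and is essentially the paper's own argument: both proofs rest on firm non-expansiveness of the two proximal steps, the identity $\alpha p(\vz)=\vx-\vx'$, and the cancellation of cross terms into $\|\Delta\vx-\Delta\vx'\|^2$. The only cosmetic difference is that the paper invokes firm non-expansiveness of the complement map $\vz\mapsto\vz-\prox_{\alpha\brr}(\vz)$ and expands $\|\vz-\alpha p(\vz)-\tilde{\vz}+\alpha p(\tilde{\vz})\|^2$, whereas you use firm non-expansiveness of $\prox_{\alpha\brr}$ directly and add the two inequalities; these are algebraically equivalent.
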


\begin{proof}
This Lemma is similar to
Lemma 3.3 of \cite{davis2017}.
We reproduce the proof using this paper's notation.
\begin{align*}
\|\vz-\alpha p(\vz)-\tilde{\vz}+\alpha p(\tilde{\vz})\|_2^2
&\overset{(a)}{=}
\|\vz-\vx-\tilde{\vz}+\tilde{\vx}\|_2^2+
\|\vx'-\tilde{\vx}'\|_2^2
+2(\vz-\vx-\tilde{\vz}+\tilde{\vx})^T(\vx'-\tilde{\vx}')\\
&\overset{(b)}{\le}
(\vz-\vx-\tilde{\vz}+\tilde{\vx})^T
(\vz-\tilde{\vz})
+
(\vx'-\tilde{\vx}')^T
(2\vx-\vz-\alpha \nabla f(\vx)-2\tilde{\vx}+\tilde{\vz}+\alpha \nabla f(\tilde{\vx}))\\
&\qquad +2(\vz-\vx-\tilde{\vz}+\tilde{\vx})^T(\vx'-\tilde{\vx}')\\
&=
(\vz-\alpha p(\vz)-\tilde{\vz}+\alpha p(\tilde{\vz}))^T(\vz-\tilde{\vz})
-\alpha(\nabla f(\vx)-\nabla f(\tilde{\vx}))^T(\vx'-\tilde{\vx}'),\\
\end{align*}
where (a) is due to $\alpha p(\vz) = \vx-\vx'$ and $\alpha p(\tilde{\vz}) = \tilde{\vx}-\tilde{\vx'}$,
(b) follows from the fact that
the two mappings:
\[
\vz~ \mapsto~ \vz-\vx=\vz-\prox_{\alpha \brr}(\vz)
\]
and
\[
2\vx-\vz-\alpha \nabla \brf(\vx) ~\mapsto~ \vx'
=
\prox_{\alpha \brg}\left(2\vx-\vz-\alpha \nabla \brf(\vx)\right)
\]
are both firmly non-expansive.
By expanding the $\|\vz-\alpha p(\vz)-\tilde{\vz}+\alpha p(\tilde{\vz})\|_2^2$ and cancellation, we obtain
\begin{align*}
\|\alpha p(\vz)-\alpha p(\tilde{\vz})\|_2^2
&\le
(\alpha p(\vz)- \alpha p(\tilde{\vz}))^T(\vz-\tilde{\vz})
-\alpha(\nabla f(\vx)-\nabla f(\tilde{\vx}))^T(\vx'-\tilde{\vx}'),
%\\
%\alpha\|p(\vz)-p(\tilde{\vz})\|^2 &\le  ( p(\vz)-p(\tilde{\vz}))^T(\vz-\tilde{\vz})
%-
%( \nabla \brf(\vx)-\nabla \brf(\tilde{\vx}))^T( \vx'-\tilde{\vx}'),
\end{align*}
which proves the lemma by dividing both sides by $\alpha$.
\end{proof}

\begin{lemma}
\label{lem:descent}
Let $\vz^\star$ be any fixed point of
\eqref{eq:main-method}, i.e., $p(\vz^\star)=0$.
Then
\begin{equation}
\|\vz^k-\vz^\star\|_2^2\le \|\vz^0-\vz^\star\|_2^2
\label{eq:bounded-z}
\end{equation}
for all $k=0,1,\dots$.
Moreover, we have
\begin{align}
  &\sum_{k=0}^{\infty}\|p(\vz^k)\|^2<\infty
  \label{eq:p-sum}  \\
  &\sum_{k=0}^{\infty}\| \nabla \brf(\vx^{k+1/2})-\nabla \brf(\vx^\star)\|^2<\infty.
\end{align}
Finally,
$\|p(\vz^{k})\|^2$ monotonically decreases.
\end{lemma}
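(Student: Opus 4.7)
The plan is to exploit Lemma~\ref{lem:key} in two different specializations. With $\tilde \vz = \vz^\star$ it gives a Fej\'er-monotonicity inequality that simultaneously delivers \eqref{eq:bounded-z} and both summability bounds. With the consecutive iterates $(\vz,\tilde\vz) = (\vz^{k+1},\vz^k)$, and the same algebraic maneuvers, it yields that $T := I - \alpha p$ is nonexpansive, which is all that is needed for the monotone decrease of $\|p(\vz^k)\|_2^2$.

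For the Fej\'er step, specializing Lemma~\ref{lem:key} to $\tilde\vz = \vz^\star$ (so $p(\vz^\star)=0$ and the intermediate points are both $\vx^\star$) and combining with the identity $\|\vz^{k+1}-\vz^\star\|^2 = \|\vz^k - \vz^\star - \alpha p(\vz^k)\|^2$ yields
\[
\|\vz^{k+1}-\vz^\star\|^2 \le \|\vz^k-\vz^\star\|^2 - \alpha^2\|p(\vz^k)\|^2 - 2\alpha A_k^T(\vx^{k+1}-\vx^\star),
\]
where $A_k := \nabla \bar f(\vx^{k+1/2}) - \nabla \bar f(\vx^\star)$. I then decompose $\vx^{k+1}-\vx^\star = (\vx^{k+1/2}-\vx^\star) - \alpha p(\vz^k)$ using \eqref{eq:p-mapping2}, bound the first piece below by $(1/L)\|A_k\|^2$ via Baillon-Haddad cocoercivity of $\nabla\bar f$, and split the second piece $-\alpha A_k^T p(\vz^k)$ by Young's inequality with parameter $\lambda = 4/3$. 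The resulting coefficients land at exactly the stated stepsize threshold $\alpha < 3/(2L)$, producing
\[
\|\vz^{k+1}-\vz^\star\|^2 \le \|\vz^k-\vz^\star\|^2 - c_1\|p(\vz^k)\|^2 - c_2\|A_k\|^2
\]
with $c_1,c_2>0$. This is \eqref{eq:bounded-z}; telescoping delivers $\sum_k \|p(\vz^k)\|^2 < \infty$ and $\sum_k \|A_k\|^2 < \infty$.

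For the monotone-decrease claim, I apply Lemma~\ref{lem:key} a second time with $\vz = \vz^{k+1}$, $\tilde\vz = \vz^k$. Running the very same cocoercivity-plus-Young argument (again with $\lambda = 4/3$) converts the lemma into $\alpha\|p(\vz^{k+1}) - p(\vz^k)\|^2 \le 2(p(\vz^{k+1}) - p(\vz^k))^T(\vz^{k+1} - \vz^k)$, which is precisely nonexpansiveness of $T = I - \alpha p$. Since $\vz^{k+1} = T\vz^k$, nonexpansiveness gives $\|\vz^{k+1}-\vz^k\| = \|T\vz^k - T\vz^{k-1}\| \le \|\vz^k - \vz^{k-1}\|$; dividing by $\alpha$ is exactly the monotone decrease of $\|p(\vz^k)\|_2$, hence of its square.

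The main obstacle is the Young's-parameter balancing: Lemma~\ref{lem:key} contributes only through the single cross-term $A_k^T(\vx^{k+1}-\vx^\star)$, so one must split that term carefully to produce strictly positive coefficients on both $\|p(\vz^k)\|^2$ and $\|A_k\|^2$ at once. The stated stepsize bound $\alpha < 3/(2L)$ is precisely what is required for the clean choice $\lambda = 4/3$ to give both positivity and nonexpansiveness of $T$ in the same range, so a single algebraic maneuver unifies all four conclusions of the lemma.
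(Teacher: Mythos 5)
Your proposal is correct and follows essentially the same route as the paper: Lemma~\ref{lem:key} combined with Baillon--Haddad cocoercivity of $\nabla\brf$ and a Young's-inequality split whose coefficients produce the threshold $\alpha<3/(2L)$, applied once with $\tilde\vz=\vz^\star$ to get Fej\'er monotonicity plus both summability claims, and once to consecutive iterates for the monotone decrease of $\|p(\vz^k)\|_2$. The only cosmetic difference is that you package the second application as nonexpansiveness of $T=I-\alpha p$ (valid, since $\alpha(1-\alpha L/3)>\alpha/2$ in the stated stepsize range), whereas the paper expands $\|p(\vz^{k+1})\|^2$ directly and retains the slightly stronger bound $\|p(\vz^{k+1})\|^2\le\|p(\vz^k)\|^2-(1-\tfrac{2\alpha L}{3})\|p(\vz^{k+1})-p(\vz^k)\|^2$.
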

\begin{proof}
With the Baillon-Haddad theorem and Young's inequality on (a) we get
%$\pm ab\le \tfrac{1}{2\beta}a^2 + \tfrac{\beta}{2}b^2$, $\forall a,b\in\reals,\beta>0$,
%, in a way similar to Proposition 3.1 of \cite{DavisYin2015_threeoperator}:
\begin{align}
 -(\nabla &\brf(\vx)-\nabla \brf(\tilde{\vx}))^T (\vx'-\tilde{\vx}')\\
&= -( \nabla \brf(\vx)-\nabla \brf(\tilde{\vx}))^T
(\vx-\alpha p(\vz)-\tilde{\vx}+\alpha p(\tilde{\vz}))\\
&=-(\nabla \brf(\vx)-\nabla \brf(\tilde{\vx}))^T
(\vx-\tilde{\vx}) + \alpha(   \nabla \brf(\vx)-\nabla \brf(\tilde{\vx}))^T
( p(\vz)- p(\tilde{\vz}))\\
&\overset{(a)}{\le} -\tfrac{1}{L}\| \nabla \brf(\vx)-\nabla \brf(\tilde{\vx})\|^2 + \tfrac{3}{4L}\|\nabla \brf(\vx)-\nabla \brf(\tilde{\vx})\|^2+\tfrac{L\alpha^2}{3}\|  p(\vz)- p(\tilde{\vz})\|^2\\
& = \tfrac{L\alpha^2}{3}\|  p(\vz)- p(\tilde{\vz})\|^2-\tfrac{1}{4L}\| \nabla \brf(\vx)-\nabla \brf(\tilde{\vx})\|^2.\label{eq:key2}
\end{align}
Combining Lemma~\ref{lem:key} and equation \eqref{eq:key2}, we obtain
\begin{align}
\label{eq:key3}
-( p(\vz)- p(\tilde{\vz}))^T(\vz-\tilde{\vz}) \le \alpha (\tfrac{\alpha L}{3}-1)\| p(\vz)- p(\tilde{\vz})\|^2-\tfrac{1}{4L}\| \nabla \brf(\vx)-\nabla \brf(\tilde{\vx})\|^2.
\end{align}

Applying \eqref{eq:key3} separately with $(\vz,\tilde{\vz})=(\vz^k,\vz^\star)$ and $(\vz^k,\vz^{k+1})$,
we get, respectively,
\begin{align}
\|\vz^{k+1}-\vz^\star\|^2 % &  = \|\vz^k-\lambda p(\vz^k)-\vz^\star\|^2\\
&= \|\vz^k-\vz^\star\|^2+\alpha^2\|p(\vz^k)\|^2 - 2\alpha (\vz^k-\vz^\star)^Tp(\vz^k)
\nonumber
\\
& \le \|\vz^k-\vz^\star\|^2-\alpha^2(1-\tfrac{2\alpha L}{3})\|p(\vz^k)\|^2-\tfrac{\alpha}{2L}\| \nabla \brf(\vx^{k+1/2})-\nabla \brf(\vx^\star)\|^2
\label{eq:keyz}
\end{align}
and
\begin{align}
\|p(\vz^{k+1})\|^2
&= \|p(\vz^{k})\|^2 + \|p(\vz^{k+1})-p(\vz^{k})\|^2 - 2( p(\vz^{k+1})-p(\vz^{k}))^T\tfrac{1}{\alpha}(\vz^{k+1}-\vz^k)\nonumber\\
&\le \|p(\vz^{k})\|^2 -(1-\tfrac{2\alpha L}{3}) \|p(\vz^{k+1})-p(\vz^{k})\|^2.\label{eq:keyp}
\end{align}
(In \eqref{eq:key2}
we can use a different parameter for Young's inequality,
and improve  inequalities \eqref{eq:keyz} and \eqref{eq:keyp}
to allow $\alpha<2/L$, which is better than $\alpha< 3/(2L)$.
In fact, $\alpha<2/L$ is sufficient for convergence in Theorem~\ref{thm:seq}. However,
we use the current version because
we need the last term of inequality \eqref{eq:keyz} for proving
Theorem~\ref{thm:erg}.)

Summing \eqref{eq:keyz} through $k=0,1,\dots$ give us the summability result.
Inequality \eqref{eq:keyp} states that
$\|p(\vz^{k})\|^2$ monotonically decreases.

\end{proof}

\begin{proof}[Proof of Theorem~\ref{thm:seq}]
Lemma~\ref{lem:descent} already states that $\|p(\vz^k)\|_2^2$
decreases monotonically.
Using inequality \eqref{eq:p-sum} of Lemma~\ref{lem:descent},
we get
\[
\|p(\vz^k)\|^2_2=
\min_{i=0,1,\dots,k}\|p(\vz^i)\|^2_2\le
\frac{1}{k}
\sum_{i=0}^{\infty}\|p(\vz^i)\|^2_2=C/k=\mathcal{O}(1/k)
\]
for some finite constant $C$.
(The rate $\mathcal{O}(1/k)$ can be improved to $o(1/k)$
using, say, Lemma 1.2 of \cite{deng2016},
but we present the simpler argument.)

By \eqref{eq:bounded-z} of Lemma~\ref{lem:descent},
$\vz^k$ is a bounded sequence
and will have a limit point, which we call $\vz^\infty$.
Lemma~\ref{lem:key} also implies $p$ is a continuous function.
Since $p(\vz^k)\rightarrow 0$ and $p$ is continuous,
the limit point $\vz^\infty$ must satisfy
$p(\vz^\infty)=0$.
Applying inequality \eqref{eq:bounded-z} with $\vz^\star=\vz^\infty$
tells us that
$\|\vz^k-\vz^\infty\|_2^2\rightarrow 0$,
i.e., the entire sequence converges.

Since
$\prox_{\alpha r}$ is a continuous function
\[
x^{k+1/2}=\prox_{\alpha r}(\bar{z}^k)
\rightarrow
\prox_{\alpha r}(\bar{z}^\star)=x^\star.
\]
With this same argument, we also conclude that
$x^k_i\rightarrow x^\star$ for all $i=1,\dots,n$.
\end{proof}

\begin{proof}[Proof of Theorem~\ref{thm:fun-rate}]
A convex function $h$ satisfies the inequality
\[
h(x)-h(\tilde{x})\le (\tnabla h(x))^T(x-\tilde{x})
\]
for any $x$ and $\tilde{x}$ (so long as a subgradient $\tnabla h(x)$
exists).

Applying this inequality we get
\begin{align}
&E^k\le (\tnabla\brr(\vx^{k+1/2})+\nabla\brf(\vx^{k+1/2}))^T( \vx^{k+1/2}-\vx^\star ) +
( \tnabla\brg(\vx^{k+1}))^T(\vx^{k+1}-\vx^\star  )\nonumber\\
&= (\tnabla\brr(\vx^{k+1/2})+\nabla\brf(\vx^{k+1/2})+\tnabla\brg(\vx^{k+1}))^T(\vx^{k+1/2}-\vx^\star )
- ( \tnabla\brg(\vx^{k+1}))^T(\alpha p(\vz^k) )\nonumber\\
&=( \vx^{k+1/2}-\alpha \tnabla\brg(\vx^{k+1})-\vx^\star)^Tp(\vz^k) \\
&=( (\vz^{k+1}-\vz^\star)+ \alpha (\tnabla\brr(\vx^\star)+\nabla\brf(\vx^{k+1/2})))^Tp(\vz^k)\\
&=( \vz^{k+1}-\vz^\star)^Tp(\vz^k)+
\alpha ( \tnabla\brr(\vx^\star)+\nabla\brf(\vx^{k+1/2}))^Tp(\vz^k).\label{eq:keyEk1}
\end{align}
For the second equality, we used
\[
p(\vz^k)=\tnabla\brr(\vx^{k+1/2})+\nabla\brf(\vx^{k+1/2})+\tnabla\brg(\vx^{k+1}).
\]
and combined terms. %then $\alpha \tnabla \brr(\vx^\star)+\vx^\star=\vz^\star$.
For the third equality, we used
$\vx^\star = \vz^\star-\alpha \tnabla\brr(\vx^\star)$
and
\begin{align*}
\vx^{k+1/2}-&\alpha\nabla \brf(\vx^{k+1/2})-\alpha \tnabla\brg(\vx^{k+1})\\
&= \vz^k - \alpha \tnabla\brr(\vx^{k+1/2})-\alpha\nabla \brf(\vx^{k+1/2}) - \alpha \tnabla\brg(\vx^{k+1})\\
&= \vz^k - \alpha p(\vz^k) \\
&= \vz^{k+1}.
\end{align*}
Likewise we have
\begin{align}
&E^k \ge ( \tnabla\brr(\vx^\star)+\nabla\brf(\vx^\star))^T(\vx^{k+1/2}-\vx^\star )
+ ( \tnabla\brg(\vx^\star))^T(\vx^{k+1}-\vx^\star )\nonumber\\
&=(\vx^{k+1}-\vx^\star)^Tp(\vz^\star) + ( \tnabla \brr(\vx^\star)+\nabla\brf(\vx^\star))^T\alpha p(\vz^k)\\
& =(\tnabla \brr(\vx^\star)+\nabla\brf(\vx^\star))^T\alpha p(\vz^k).
\label{eq:keyEk2}
\end{align}
Here we use
\[
p(\vz^\star)=\tnabla\brr(\vx^\star)+\nabla\brf(\vx^\star)
+\tnabla \brr(\vx^\star)=0
\]

Theorem~\ref{thm:seq}
states that
the sequences
$\vz^1,\vz^2,\dots$ and $\vx^{1+1/2},\vx^{2+1/2},\dots$ both converge and therefore are bounded
and that $\|p(\vz^k)\|_2=\mathcal{O}(1/\sqrt{k})$.
Combining this with the bounds on $E^k$
gives us $|E^k|\le \mathcal{O}(1/\sqrt{k})$.

\end{proof}

%By \eqref{eq:keyz}, $t^k=O(\|\vp^k\|^2)$. Hence, $\vz^{k+1}-\vz^\star$ and $\tnabla\brr(\vx^\star)+\nabla\brf(\vx^{k+1/2})$ are bounded, too.

\begin{proof}[Proof of Theorem~\ref{thm:erg}]
By Jensen's inequality, we have
\begin{align}\label{eq:jensens}
&E^k_\mathrm{erg} \le\frac{1}{k}\sum_{i=1}^k E^i.
\end{align}
Continuing the last line of \eqref{eq:keyEk1}, we get
\begin{align}\label{eq:eachEk}
  E^k&\le
  ( \vz^{k+1}-\vz^\star)^Tp(\vz^k)+
\alpha ( \tnabla\brr(\vx^\star)+\nabla\brf(\vx^{k+1/2}))^Tp(\vz^k) \\
&=\tfrac{1}{2\alpha}\|\vz^k-\vz^\star\|^2-\tfrac{1}{2\alpha}\|\vz^{k+1}-\vz^\star\|^2-\tfrac{\alpha}{2}\|p(\vz^k)\|^2
+
\alpha ( \tnabla\brr(\vx^\star)+\nabla\brf(\vx^{k+1/2}))^Tp(\vz^k).\label{eq:telscpEk}
\end{align}
Combining \eqref{eq:jensens} and \eqref{eq:eachEk} and after telescopic cancellation,
\begin{align}
%\quad (\text{cancellation in telescopic sum})\\
E^k_\mathrm{erg} &\le \frac{1}{2\alpha k}\|\vz^1-\vz^\star\|^2 + \frac{1}{k}\sum_{i=1}^k
\alpha ( \tnabla\brr(\vx^\star)+\nabla\brf(\vx^{i+1/2}))^Tp(\vz^i)\\
& = \mathcal{O}(1/k) + \frac{1}{k}
\alpha (\tnabla\brr(\vx^\star)+\nabla\brf(\vx^\star))^T
\sum_{i=1}^kp(\vz^i)
+
\frac{1}{k}
\alpha ( \nabla\brf(\vx^i)-\nabla\brf(\vx^\star))^T
\sum_{i=1}^kp(\vz^i)\\
& \le  \mathcal{O}(1/k) + \frac{1}{k\alpha} \|\vz^{k+1}-\vz^1\|\|\tnabla\brr(\vx^\star)+\nabla\brf(\vx^\star)\|
%\\& ~~
+ \frac{1}{2k}\sum_{i=1}^k \big(\alpha^2\|p(\vz^i)\|^2 + \|\nabla\brf(\vx^i)-\nabla\brf(\vx^\star)\|^2\big)\\
& =\mathcal{O}(1/k),
\end{align}
where the last line holds due to the boundedness of $\vz^k$
and Lemma~\ref{lem:descent}.
With a similar argument as in \eqref{eq:keyEk2},
we get
\begin{align}
  E^k_\mathrm{erg} &\ge
  (\tnabla \brr(\vx^\star)+\nabla\brf(\vx^\star))^T(
  \vx^{k+1/2}_\mathrm{erg}-\vx^{k+1}_\mathrm{erg})
  =
\frac{1}{k} \left(\sum_{i=1}^{k}\alpha p(\vz^k)
\right)^T
(\tnabla \brr(\vx^\star)+\nabla \brf(\vx^\star))\\
&= \tfrac{1}{k}(\vz^{k}-\vz^0)^T( \tnabla \brr(\vx^\star)+\nabla \brf(\vx^\star))
= O(1/k),
  \label{eq:nbEkbnd}
\end{align}
where have once again used the boundedness of $(\vz^k)_k$.
Furthermore, since
\begin{align}
|E^k_\mathrm{erg}-e^k_{\mathrm{erg}}|\le
L_g\|
\vx^{k+1}_\mathrm{erg}
-\vx^{k+1/2}_\mathrm{erg}\| =
L_g
\|\tfrac{1}{k}(\vz^{k+1}-\vz^1)\|=
\mathcal{O}(1/k), \label{eq:bEkbek}
\end{align}
we have $e^k_{\mathrm{erg}} =\mathcal{O}(1/k)$.

\end{proof}

\subsection{Stochastic analysis}

\begin{proof}[Proof of Theorem~\ref{thm:stc-conv}]
To express the updates of \eqref{eq:main-sppg}
we introduce the following notation:
\[
p(\vz^k)_{[i]} =
\begin{bmatrix}
0\\
\vdots\\
%0\\
p(\vz^k)_{i}\\
%0\\
\vdots\\0
\end{bmatrix}.
\]
With this notation, we can express
the iterates of \eqref{eq:main-sppg} as
\begin{align}\label{eq:stomain}
\vz^{k+1}=\vz^k-\alpha p(\vz^k)_{[i(k)]},
\end{align}
and we also have conditional expectations
\begin{align*}
  \EE_k p(\vz^k)_{[i(k)]} &= \frac{1}{n}p(\vz^k),\\
  \EE_k \|p(\vz^k)_{[i(k)]}\|^2 &= \frac{1}{n}\|p(\vz^k)\|^2.
\end{align*}
Here, we let $\EE$ denote the expectation over all random variables $i(1),i(2),\ldots$, and $\EE_k$ denote the expectation over $i(k)$ \emph{conditioned on} $i(1),i(2),\ldots,i(k-1)$.

The convergence of this algorithm has been recently analyzed in \cite{combettes2015_stoch}
when $i(k)$ is chosen at random. Below, we adapt its proof to our setting with new rate results.
%,
%\cite{chow2016} when $i(k)$ cycles through $1,\dots,n$,
%and \cite{peng2016}
%when $i(k)$ is chosen randomly and asynchronously
%in parallel.
%In the rest of this subsection, we adapt these analyses to our setup.

%Given $\vz^k$, we retain the definition of $\vp^k=\vp(\vz^k)$.
%It is important to note that, in the stochastic setting, $\vz^{k+1}$  no longer equals $\big(\vz^k -\alpha \vp^k\big)$. Instead, we must rely on \eqref{eq:stomain}.

Note that
Lemma~\ref{lem:key} and inequality \eqref{eq:key2} remain valid,
as they are not tied to a specific sequence of random samples.
Hence, similar to \eqref{eq:keyz}, we have
\begin{align}
  \|\vz^{k+1}-\vz^*\|^2 % &  = \|\vz^k-\lambda \vp^k-\vz^*\|^2\\
= \|\vz^k-\vz^*\|^2+\alpha^2 \|p(\vz^k)_{[i(k)]}\|^2 - 2\alpha
(\vz^k-\vz^*)^T p(\vz^k)_{[i(k)]}.
\end{align}
We take the conditional expectation to get
\begin{align}
\EE_k\|\vz^{k+1}-\vz^*\|^2 % &  = \|\vz^k-\lambda \vp^k-\vz^*\|^2\\
&= \|\vz^k-\vz^*\|^2+\alpha^2 \EE_k\|p(\vz^k)_{[i(k)]}\|^2 - 2\alpha
(\vz^k-\vz^*)^T\EE_kp(\vz^k)_{[i(k)]}\\[-12pt]
&= \|\vz^k-\vz^*\|^2+\tfrac{\alpha^2}{n}\|p(\vz^k)\|^2 - \tfrac{2\alpha}{n}  (\vz^k-\vz^*)^Tp(\vz^k)\\
&\le \|\vz^k-\vz^*\|^2-\tfrac{\alpha^2}{n}(1-\tfrac{2\alpha L}{3})\|p(\vz^k)\|^2-\tfrac{\alpha}{2Ln}\| \nabla \brf(\vx^k)-\nabla \brf(\vx^*)\|^2.\label{eq:keyEz}
\end{align}
By the same reasoning as before, we have
\[
\min_{0\le i\le k}\EE\|p(\vz^k)\|^2\le \mathcal{O}(1/k).
\]
By \eqref{eq:keyEz}, the sequence $\big(\|\vz^{k}-\vz^*\|^2\big)_{k\ge 0}$ is a nonnegative supermartingale.
Applying Theorem 1 of \cite{robbins1985} to \eqref{eq:keyEz} yields the following three properties, which hold with probability one
for every fixed point $\vz^\star$:
\begin{enumerate}
    \item the squared fixed-point residual sequence is summable, that is,
    \begin{align}\label{eq:summablep}
\sum^\infty_{k=0}\|p(\vz^k)\|^2<\infty,
\end{align}
    \item $\|\vz^{k}-\vz^*\|^2$ converges to a nonnegative random number, and
    \item $(\vz^{k})_{k\ge 0}$ is bounded.
\end{enumerate}

To proceed as before, however, we need
$\|\vz^{k}-\vz^*\|^2$ to converge to a nonnegative random number (not necessarily 0)
for all fixed points $\vz^*$ with probability one.
For each fixed point $\vz^*$,
the previous argument states that there is a measure one event set\footnote{Each event is a randomly realized sequence of iterates $(\vz^k)_{k\ge 0}$ in S-PPG.}, $\Omega(\vz^*)$, such that
$\|\vz^{k}-\vz^*\|^2$ converges for all $(\vz^k)_{k\ge 0}$ taken from $\Omega(\vz^*)$. Note that $\Omega(\vz^*)$ depends on $\vz^*$ because we must select $\vz^*$ to form \eqref{eq:keyEz} first.
Since the number of fixed points (unless there is only one) is uncountable, $\cap_{\text{fixed point}~\vz^*} \Omega(\vz^*)$ may not be measure one.
Indeed, it is measure one as we now argue.

Let $Z^*$ be the set of fixed points.
Since $\mathbb{R}^{dn}$ is \emph{separable} (i.e., containing a countable, dense subset),
$Z^*$ has a countable dense subset, which we write as
$\{\vz^*_1,\vz^*_2,\dots\}$.
By countability, $\Omega_c=\cap_{i=1,2,\dots} \Omega(\vz^*_i)$
is a measure one event set, and it is defined independently of the choice of $\vz^*$. Next we show that $\|\vz^{k}-\vz^*\|^2$ to converge to a nonnegative random number (not necessarily 0)
for all fixed points $\vz^*$ with probability one by $\Omega_c$, that is, $\lim_k\|\vz^{k}-\vz^*\|$ exists for all $\vz^*\in Z^*$ and all $(\vz^k)_{k\ge 0}\in\Omega_c$.

Now consider any $\vz^*\in Z^*$ and $(\vz^k)_{k\ge 0}\in\Omega_c$. Then
for any $\varepsilon>0$, there is a $\vz^*_i$ such that
$\|\vz^*-\vz^*_i\|\le \varepsilon$.
By the triangle inequality, we can bound $\|\vz^{k}-\vz^*\|$ as
\begin{align*}
   \|\vz^{k}-\vz^*\|&\le \|\vz^{k}-\vz_i^*\|+\|\vz_i^*-\vz^*\| \le \|\vz^{k}-\vz_i^*\| + \varepsilon,\\
   \|\vz^{k}-\vz^*\|&\ge \|\vz^{k}-\vz_i^*\|-\|\vz_i^*-\vz^*\| \ge \|\vz^{k}-\vz_i^*\| - \varepsilon.
\end{align*}
Since $\|\vz^{k}-\vz_i^*\|$ converges, we have
%Since
%\[
%\|\vz^{k}-\vz^*_i\|-
%\|\vz^{k}-\vz^*_i\|\le \varepsilon,
%\]
we have
\begin{align*}
  \limsup_k\|\vz^{k}-\vz^*\| &\le \varepsilon,\\
  \liminf_k\|\vz^{k}-\vz^*\| &\ge -\varepsilon.
\end{align*}
%Since this argument holds for any $\varepsilon>0$,
%we conclude
%\[
%\limsup_k\|\vz^{k}-\vz^*_i\|-\liminf_k\|\vz^{k}-\vz^*_i\|=0,
%\]
As $\varepsilon>0$ is arbitrary, $\liminf_k\|\vz^{k}-\vz^*\|=\limsup_k\|\vz^{k}-\vz^*\|$. So, $\lim_k\|\vz^{k}-\vz^*_i\|$ exists.

Finally, we can proceed with the same argument as in the proof of Theorem~\ref{thm:seq},
and conclude that
$\vz^k\to \vz^*$, $\vx^{k+1/2}\rightarrow \vx^*$, and $\vx^{k+1}\rightarrow \vx^*$
on the measure one set $\Omega_c$.

\end{proof}

\begin{proof}[Proof of Theorem \ref{thm:storates}]
\newcommand{\vp}{p}
This proof focuses on the treatments that are different from the deterministic analysis. We only go through the steps of estimating the upper bound of $\EE(E^k)$, skipping the similar treatment to obtain the lower bound and other rates.

We reuse a part of \eqref{eq:keyEk1} but avoid replacing $\vz^{k}-\alpha p(\vz^k)$ by $\vz^{k+1}$ because of \eqref{eq:stomain}:
\begin{align}
E^k&\le
%\langle  \tnabla\brr(\vx^k)+\nabla\brf(\vx^k), \vx^k-\vx^* \rangle + \langle \tnabla\brg(\vy^k),\vy^k-\vy^* \rangle\nonumber\\
%&= \langle  \underbrace{\tnabla\brr(\vx^k)+\nabla\brf(\vx^k)+\tnabla\brg(\vy^k)}_{=\vp^k}, \vx^k-\vx^* \rangle - \langle \tnabla\brg(\vy^k),\alpha\vp^k \rangle\nonumber\\
%&=
%\langle \vp^k, \vx^k-\alpha \tnabla\brg(\vy^k)-\vx^*\rangle \\
%&=
(\vx^{k+1/2}-\alpha \tnabla\brg(\vx^{k+1})-\vx^*)^T p(\vz^k) \\
%&=(\vx^{k+1/2}-\alpha \tnabla\brg(\vx^{k+1})-\vx^*)^Tp(\vz^k) \\
&=((\vz^{k}-\alpha p(\vz^k)-\vz^*)+ \alpha (\tnabla\brr(\vx^*)+\nabla\brf(\vx^{k+1/2})))^T p(\vz^k)\\
&=( \vz^{k}-\alpha p(\vz^k)-\vz^*)^Tp(\vz^k)+\alpha ( \tnabla\brr(\vx^*)+\nabla\brf(\vx^{k+1/2}))^Tp(\vz^k)\\
&=( \vz^{k}-\alpha p(\vz^k)-\vz^*)^Tp(\vz^k)+\alpha ( \tnabla\brr(\vx^*)+\nabla\brf(\vx^*))^Tp(\vz^k)\\
&\quad + \alpha ( \nabla\brf(\vx^{k+1/2})-\nabla\brf(\vx^*))^Tp(\vz^k).\label{eq:keyEkS}
%&=\langle \vp^k, (\vz^{k}-\alpha \vp^k -\vz^*)+ \alpha (\tnabla\brr(\vx^*)+\nabla\brf(\vx^k))\rangle\\
%&\le \|\vp^k\| \cdot \|(\vz^{k}-\alpha \vp^k -\vz^*)+ \alpha (\tnabla\brr(\vx^*)+\nabla\brf(\vx^k))\|.
%&=\overbrace{\tfrac{1}{2\alpha}\|\vz^k-\vz^*\|^2-\tfrac{1}{2\alpha}\|\vz^{k+1}-\vz^*\|^2-\tfrac{\alpha}{2}\|\vp^k\|^2}^{t^k\triangleq } \\
%&~~+ \alpha\langle \vp^k,\tnabla\brr(\vx^*)+\nabla\brf(\vx^k) \rangle,
\end{align}
By the Cauchy-Schwarz inequality,
\begin{align}
  \EE(E^k)&\le \EE\|\vz^{k}-\alpha p(\vz^k)-\vz^*\|\cdot\EE\|p(\vz^k)\| +\alpha\|\tnabla\brr(\vx^*)+\nabla\brf(\vx^*)\|\cdot\EE\|p(\vz^k)\|\\
  &\quad +  \alpha \EE\| \nabla\brf(\vx^{k+1/2})-\nabla\brf(\vx^*)\|\cdot \EE \|p(\vz^k)\| \\
  &\le \bigg(\sqrt{\EE\|\vz^{k}-\alpha p(\vz^k)-\vz^*\|^2}+\alpha\|\tnabla\brr(\vx^*)+\nabla\brf(\vx^*)\|\\
  &\quad +  \alpha \sqrt{\EE\| \nabla\brf(\vx^{k+1/2})-\nabla\brf(\vx^*)\|^2}\bigg)\sqrt{\EE\|p(\vz^k)\|^2}.\label{eq:sqbnd}
\end{align}
Here we have  $\sqrt{\EE\|\vz^{k}-\alpha p(\vz^k)-\vz^*\|^2}\le \sqrt{\|\vz^{0}-\vz^*\|^2}$ since, similar to \eqref{eq:keyz},
\begin{align}
\|&\vz^{k}-\alpha p(\vz^k)-\vz^*\|^2 % &  = \|\vz^k-\lambda p(\vz^k)-\vz^*\|^2\\
= \|\vz^k-\vz^*\|^2+\alpha^2\|p(\vz^k)\|^2 - 2\alpha (\vz^k-\vz^*)^Tp(\vz^k)
\nonumber
\\
&\le \|\vz^k-\vz^*\|^2-\alpha^2(1-\tfrac{2\alpha L}{3})\|p(\vz^k)\|^2-\tfrac{\alpha}{2L}\| \nabla \brf(\vx^{k+1/2})-\nabla \brf(\vx^*)\|^2
\nonumber\\
&\le \|\vz^{k}-\vz^*\|^2
\end{align}
and, by \eqref{eq:keyEz}, $\EE\|\vz^{k}-\vz^*\|^2 \le \EE\|\vz^{k-1}-\vz^*\|^2\le \cdots\le \|\vz^{0}-\vz^*\|^2.$ The next term in \eqref{eq:sqbnd}, $\alpha\|\tnabla\brr(\vx^*)+\nabla\brf(\vx^*)\|$, is a constant. For the third term, from
\begin{align*}
\| \nabla\brf(\vx^{k+1/2})-\nabla\brf(\vx^*)\|^2\overset{(a)}{\le} L^2\| \vx^{k+1/2}-\vx^*\|^2\overset{(b)}\le L^2\| \vz^{k}-\vz^*\|^2,
\end{align*}
where (a) is due to Lipschitz continuity and (b) due to nonexpansiveness of the proximal mapping, it follows that $\alpha \sqrt{\EE\| \nabla\brf(\vx^{k+1/2})-\nabla\brf(\vx^*)\|^2}\le \alpha L  \sqrt{\|\vz^{0}-\vz^*\|^2}$.
Since $\EE\|p(\vz^k)\|^2=\mathcal{O}(1/k)$, we immediately have $\EE(E^k)\le \mathcal{O}(1/\sqrt{k})$. Similarly, we can also show $-\EE(E^k)\le \mathcal{O}(1/\sqrt{k})$. Therefore, $\EE|E^k| = \mathcal{O}(1/\sqrt{k})$.

By extending the previous analysis of $|E^k_\mathrm{erg}|$ and $e^k_\mathrm{erg}$ to $\EE|E^k_\mathrm{erg}|$ and $\EE(e^k_\mathrm{erg})$, respectively, along the same line of arguments, it is straightforward to show
$\EE|E^k_\mathrm{erg}|=\mathcal{O}(1/k)$ and $\EE(e^k_\mathrm{erg})=\mathcal{O}(1/k)$.
\end{proof}

\subsection{Linear convergence analysis}
We first review some definitions and  inequities. Let $h$ be a closed convex proper function.
We let $\mu_h\ge 0$  be the strong-convexity constant of $h$,
where $\mu_h>0$ when $h$ is strongly convex and  $\mu_h=0$ otherwise.
When $h$ is differentiable and $\nabla h$ is Lipschitz continuous, we define
$(1/\beta_h)$ be the Lipschitz constant of $\nabla h$. When $h$ is either non-differentiable or differentiable but $\nabla h$ is not Lipschitz, we define $\beta_h=0$.
Under these definitions, we have
\begin{align}\label{eq:fubnd}
h(y) - &h(x) \nonumber\\
&\ge \langle \tnabla h(x),y-x\rangle + \underbrace{\frac{1}{2}\max\big\{\mu_h\|x-y\|^2,\beta_h\|\tnabla h(x)-\tnabla f(y) \|^2\big\}}_{S_h(x,y)},
\end{align}
for any points $x,y$ where the subgradients $\tnabla h(x),\tnabla h(y)$ exist. %When $f$ non-differentiable, we have $\beta_f=0$ and thus $\beta_f\|\tnabla f(x)-\tnabla f(y) \|^2=0$. When $f$ is non-strongly convex, we have $\mu_f=0$ and thus $\mu_f\|x-y\|^2=0$.
Note that $S_h(x,y)=S_h(y,x)$.

For the three convex functions $\brr,\brf,\brg$, we introduce their parameters $\mu_{\brr},\beta_{\brr},\mu_{\brf},\beta_{\brf},\mu_{\brg},\beta_{\brg}$, as well as the combination
\begin{align}
S(\vz,\vz^*)=S_{\brr}(\vx,\vx^* ) +S_{\brf}(\vx,\vx^* ) +S_{\brg}(\vx',\vx^* ).
\end{align}
As we assume each $f_i$ has $L$-Lipschitz gradient, we set $\beta_{\brf}=1/L$. Since $\brr$ includes the indicator function $I_{C}$, which is non-differentiable, we set $\beta_{\brr}=0$. The values of remaining parameters $\mu_{\brr},\mu_{\brf},\mu_{\brg},\beta_{\brg}\ge0$ are kept unspecified. Applying \eqref{eq:fubnd} to each pair of the three functions in $$E = \big(\brr(\vx)+\brf(\vx)+\brg(\vx')\big)-\big(\brr(\vx^*)+\brf(\vx^*)+\brg(\vx^*)\big)$$ yields
\begin{align}
E &\le (\tnabla\brr(\vx)+\nabla\brf(\vx))^T( \vx-\vx^* ) +
( \tnabla\brg(\vx'))^T(\vx'-\vx^*  ) -S(\vz,\vz^*),
\\
E &\ge (\tnabla\brr(\vx^*)+\nabla\brf(\vx^*))^T( \vx-\vx^* ) +
( \tnabla\brg(\vx^*))^T(\vx'-\vx^*  )+ S(\vz,\vz^*).
\end{align}
In this fashion, both the upper and lower bounds on  $E^k$, which we previously derive, are tightened by $S(\vz^k,\vz^*)$.
In particular, we can tightened  \eqref{eq:telscpEk} and \eqref{eq:keyEk2} as
\begin{align}
E^k & \le \tfrac{1}{2\alpha}\|\vz^k-\vz^*\|^2-\tfrac{1}{2\alpha}\|\vz^{k+1}-\vz^*\|^2-\tfrac{\alpha}{2}\|p(\vz^k)\|^2
+
\alpha ( \tnabla\brr(\vx^*)+\nabla\brf(\vx^{k+1/2}))^Tp(\vz^k)-S(\vz^k,\vz^*),\\
E^k & \ge (\tnabla \brr(\vx^*)+\nabla\brf(\vx^*))^T\alpha p(\vz^k) +S(\vz^k,\vz^*),
\end{align}
where the two terms involving $S(\vz^k,\vz^*)$ are newly added.
%Here, we can add $-S(\vz^k,\vz^*)$ to \eqref{eq:telscpEk} because it is a consequence of \eqref{eq:keyEk1}, and $-S(\vz^k,\vz^*)$ can be added to the very first inequality in \eqref{eq:keyEk1}.
Combining the upper and lower bounds of $E^k$ yields
\begin{align}\label{eq:geomz}
\tfrac{1}{2\alpha}\|\vz^{k+1}-\vz^*\|^2 \le \tfrac{1}{2\alpha}\|\vz^k-\vz^*\|^2 - Q,
\end{align}
where
\begin{align}
Q &= -\alpha ( \tnabla\brr(\vx^*)+\nabla\brf(\vx^{k+1/2}))^Tp(\vz^k)+\alpha(\tnabla \brr(\vx^*)+\nabla\brf(\vx^*))^T p(\vz^k) +\tfrac{\alpha}{2}\|p(\vz^k)\|^2+2S(\vz^k,\vz^*)\\
&=-\alpha (\nabla\brf(\vx^{k+1/2})-\nabla\brf(\vx^*) )^Tp(\vz^k)+\tfrac{\alpha}{2}\|p(\vz^k)\|^2+2S(\vz^k,\vz^*)\\
&=\Big(-\alpha (\nabla\brf(\vx^{k+1/2})-\nabla\brf(\vx^*) )^Tp(\vz^k)+\tfrac{\alpha}{2}\|p(\vz^k)\|^2+2S_{\brf}(\vz^k,\vz^*)\Big) +2S_{\brr}(\vz^k,\vz^*) +2S_{\brg}(\vz^k,\vz^*)\\
&\ge c_1\Big(\|p(\vz^k)\|^2+\|\nabla\brf(\vx^{k+1/2})-\nabla\brf(\vx^*)\|^2\Big)+
\frac{\mu_{\brf}}{2}\|\vx^{k+1/2}-\vx^{\star}\|^2+2S_{\brr}(\vz^k,\vz^*) +2S_{\brg}(\vz^k,\vz^*),
\end{align}
where $c_1>0$ is a constant and the inequality follows from the Young's inequality:
\[
\alpha (\nabla\brf(\vx^{k+1/2})-\nabla\brf(\vx^*) )^Tp(\vz^k)\le
\tfrac{\alpha}{4}\|p(\vz^k)\|^2+\alpha  \|\nabla\brf(\vx^{k+1/2})-\nabla\brf(\vx^*)\|^2
\]
and $\alpha < \beta_{\brf}$.
Later on, in three different cases, we will show
\begin{align}\label{eq:Qk}
Q\ge C\|\vz^k-\vz^*\|^2.
\end{align}
Hence, by substituting \eqref{eq:Qk} into \eqref{eq:geomz},  we  obtain the Q-linear (or quotient-linear) convergence relation
\begin{align}
\|\vz^{k+1}-\vz^*\|\le \sqrt{1-2\alpha C} \|\vz^k-\vz^*\|,
\end{align}
from which it is easy to further derive the Q-linear convergence results for $|E^k|$ and $e^k$.

\vspace{0.1in}\noindent\textbf{Case 1.} Assume $\brg$ is both strongly convex and has Lipschitz gradient, i.e., $\mu_{\brg},\beta_{\brg}>0$,  (and $\brf$ still has Lipschitz gradient). In this case,
\begin{align}
Q &= c_1\Big(\|p(\vz^k)\|^2+\|\nabla\brf(\vx^{k+1/2})-\nabla\brf(\vx^*)\|^2\Big)+2S_{\brg}(\vz^k,\vz^*)\\
&\ge c_1\Big(\|p(\vz^k)\|^2+\|\nabla\brf(\vx^{k+1/2})-\nabla\brf(\vx^*)\|^2\Big)+ \mu_{\brg} \|\vx^{k+1}-\vx^{\star}\|^2. \label{eq:c1q}
\end{align}
(Here we use $\nabla \brg$ instead of $\tnabla \brg$ since $\brg$ is differentiable.)
By the identities
\begin{align}
\vz^k &= \vx^{k+1}-\alpha  \big(\nabla\brg(\vx^{k+1})+ \nabla \brf(\vx^{k+1/2})\big)+2\alpha p(\vz^k),\\
\vz^* &= \vx^{\star}  \quad-\alpha  \big(\nabla\brg(\vx^{\star})+ \nabla \brf(\vx^{\star})\big),
\end{align}
 the triangle inequality, and $\|\nabla\brg(\vx^{k+1})-\nabla\brg(\vx^{\star})\|\le (1/\beta_{\brg})\| \vx^{k+1} - \vx^{\star} \|$, we get
\begin{align}
\|\vz^k-\vz^*\|^2 & =\big\|(\vx^{k+1}-\vx^{\star})-\alpha\big(\nabla\brg(\vx^{k+1})-\nabla\brg(\vx^{\star})\big)-\alpha\big(\nabla\brf(\vx^{k+1/2})-\nabla\brf(\vx^{\star})\big)+2\alpha p(\vz^k)\big\|^2\\
&\le 3\big\|(\vx^{k+1}-\vx^{\star})-\alpha\big(\nabla\brg(\vx^{k+1})-\nabla\brg(\vx^{\star})\big)\big\|^2+3\alpha^2\|\nabla\brf(\vx^{k+1/2})-\nabla\brf(\vx^{\star})\|^2+12\alpha^2\| p(\vz^k)\|^2\\
&\le 3(1+\tfrac{\alpha}{\beta_{\brg}})^2\|\vx^{k+1}-\vx^{\star}\|^2 +3\alpha^2\|\nabla\brf(\vx^{k+1/2})-\nabla\brf(\vx^{\star})\|^2+12\alpha^2\| p(\vz^k)\|^2 \label{eq:c1z}.
\end{align}
Since \eqref{eq:c1z} is bounded by \eqref{eq:c1q} up to a constant factor, we have established \eqref{eq:Qk} for this case.

\vspace{0.1in}\noindent\textbf{Case 2.}  $\brf$ is strongly convex and $\brg$ has Lipschitz gradient, i.e., $\mu_{\brf}, \beta_{\brg}>0$ (and $\brf$ still has Lipschitz gradient).
In this case, %by the definition of $S_{\brf}(\vz^k,\vz^*)$ and $S_{\brg}(\vz^k,\vz^*)$,
\begin{align}
Q &= c_1\Big(\|p(\vz^k)\|^2+\|\nabla\brf(\vx^{k+1/2})-\nabla\brf(\vx^*)\|^2\Big)+\mu_{\brf}\|\vx^{k+1/2}-\vx^*\|^2.\label{eq:c2q}
\end{align}
By the identities
\begin{align}
\vz^k &= \vx^{k+1/2}-\alpha  \big(\nabla\brg(\vx^{k+1})+ \nabla \brf(\vx^{k+1/2})\big)+\alpha p(\vz^k),\\
\vz^* &= \vx^{\star}  \qquad-\alpha  \big(\nabla\brg(\vx^{\star})+ \nabla \brf(\vx^{\star})\big),
\end{align}
 the triangle inequality, and $\|\nabla\brg(\vx^{k+1})-\nabla\brg(\vx^{\star})\|\le (1/\beta_{\brg})\| \vx^{k+1} - \vx^{\star} \|$, we get
\begin{align}
\|\vz^k-\vz^*\|^2 & =\big\|(\vx^{k+1/2}-\vx^{\star})-\alpha\big(\nabla\brg(\vx^{k+1})-\nabla\brg(\vx^{\star})\big)-\alpha\big(\nabla\brf(\vx^{k+1/2})-\nabla\brf(\vx^{\star})\big)+\alpha p(\vz^k)\big\|^2\\
&\le 4\|\vx^{k+1/2}-\vx^{\star}\|^2 +4\alpha^2\|\nabla\brg(\vx^{k+1})-\nabla\brg(\vx^{\star})\|^2+4\alpha^2\|\nabla\brf(\vx^{k+1/2})-\nabla\brf(\vx^{\star})\|^2+4\alpha^2\| p(\vz^k)\|^2\\
&\le 4\|\vx^{k+1/2}-\vx^{\star}\|^2 +4\tfrac{\alpha^2}{\beta_{\brg}^2}\|\vx^{k+1}-\vx^{\star}\|^2+4\alpha^2\|\nabla\brf(\vx^{k+1/2})-\nabla\brf(\vx^{\star})\|^2+4\alpha^2\| p(\vz^k)\|^2\\
&\le 4(1+\tfrac{2\alpha^2}{\beta_{\brg}^2})\|\vx^{k+1/2}-\vx^{\star}\|^2+4\alpha^2\|\nabla\brf(\vx^{k+1/2})-\nabla\brf(\vx^{\star})\|^2+4\alpha^2(1+\tfrac{2\alpha^2}{\beta_{\brg}^2})\| p(\vz^k)\|^2  \label{eq:c2z},
\end{align}
where the last line follows from $$\|\vx^{k+1}-\vx^{\star}\|^2=\|\vx^{k+1/2}-\vx^{\star}-\alpha p(\vz^k)\|^2\le 2\|\vx^{k+1/2}-\vx^{\star} \|^2+ 2\alpha^2\| p(\vz^k)\|^2.$$
Since \eqref{eq:c2z} is bounded by \eqref{eq:c2q} up to a constant factor, we have established \eqref{eq:Qk} for this case.

\vspace{0.1in}\noindent\textbf{Case 3.} $\brr$ is strongly convex and $\brg$ has Lipschitz gradient, in short, $\mu_{\brr}\beta_{\brg}>0$, (and $\brf$ still has Lipschitz gradient).
In this case, %by the definition of $S_{\brf}(\vz^k,\vz^*)$ and $S_{\brg}(\vz^k,\vz^*)$,
\begin{align}
Q &= c_1\Big(\|p(\vz^k)\|^2+\|\nabla\brf(\vx^{k+1/2})-\nabla\brf(\vx^*)\|^2\Big)+\mu_{\brr}\|\vx^{k+1/2}-\vx^*\|^2.\label{eq:c3q}
\end{align}
We still use \eqref{eq:c2z}, which is bounded by \eqref{eq:c3q} up to a constant factor, we have established \eqref{eq:Qk} for this case.

\section{Conclusion and future work}
In this paper we presented
\eqref{eq:main-method} and a variant \eqref{eq:main-sppg}.
By discussing possible applications,
we demonstrated how \eqref{eq:main-method}
expands the class of optimization problems
that can be solved with a simple and scalable method.
We proved convergence and demonstrated the effectiveness,
especially in parallel computing,
of the methods through computational experiments.

%Our hope is that \eqref{eq:main-method} will allow and encourage more complex optimization models to be utilized in machine learning and statistical modeling in the future.

% One interesting future direction
% is to further analyze the convergence properties of
% \eqref{eq:main-method} and \eqref{eq:main-sppg}.
% While our proofs only show $\mathcal{O}(1/\sqrt{k})$
% or $\mathcal{O}(1/k)$ rate of convergence,
% many experiments actually exhibit exponential (linear) convergence.
% Such analysis will likely lead to a better understanding of how to best choose
% the parameter $\alpha$.

% Another 
An interesting future direction
is to consider cyclic and asynchronous variations of
\eqref{eq:main-method}.
Generally speaking, random coordinate updates
can be computationally inefficient, and
cyclic coordinate updates access the data more efficiently.
\eqref{eq:main-method} is a synchronous algorithm; at each iteration
the $z_1,\dots,z_n$ are updated synchronously and the synchronization
can cause inefficiency.
Asynchronous updates avoid this problem.

% \section{Future work}
% \begin{itemize}
%     \item "J.ReddiHefnySraPoczosSmola2015\_variance" discusses the stepsizes of SAGA,SVRG ($\eta$) and async results. Delay $\tau$ is bounded. And stepsize has $1/\tau$ in it. It is a useful resource.
%     \item The original SAGA paper has $1/n$ in its step size $\gamma$.
%     \item
% \end{itemize}

%\input{convergence}

% Acknowledgements should only appear in the accepted version.
% \section*{Acknowledgements}

% \vspace{0.1in}\noindent\textbf{Do not} include acknowledgements in the initial version of
% the paper submitted for blind review.

% If a paper is accepted, the final camera-ready version can (and
% probably should) include acknowledgements. In this case, please
% place such acknowledgements in an unnumbered section at the
% end of the paper. Typically, this will include thanks to reviewers
% who gave useful comments, to colleagues who contributed to the ideas,
% and to funding agencies and corporate sponsors that provided financial
% support.

% In the unusual situation where you want a paper to appear in the
% references without citing it in the main text, use \nocite
% \nocite{langley00}

\bibliographystyle{plain}
\bibliography{ppg_siam,wyin}
\end{document}